\documentclass[a4paper, fontsize=12pt, titlepage = false, toc=bib]{scrartcl}

\usepackage{math-en}
\usepackage{pdfpages}
\usepackage{multirow}

\usepackage{newpxtext,newpxmath}
 \tikzset{every picture/.style={>=Straight Barb}} 
 \tikzset{commutative diagrams/arrow style=tikz}

 \renewcommand{\ko}{\curly O}
 
 \newcommand{\tsum}{{\textstyle \sum}}


\usetikzlibrary{positioning, intersections,calc}
\usetikzlibrary{fit}
\tikzset{
  C1col/.style = { MidnightBlue, thick},
  C1/.style ={C1col, thick},
  C2col/.style = {Orange, thick },
  C2/.style ={C2col,thick},
  C12col/.style = {ForestGreen, thick},
  C12/.style ={C12col,thick},
  CC/.style = {thick, C2col},
  L1/.style = {MidnightBlue, thick},
  L2/.style = {ProcessBlue,thick},
  L12/.style = {NavyBlue, thick},
  L3col/.style = {LimeGreen, thick},
  L3/.style = {L3col},
  L4/.style = {OliveGreen, thick},
  L34/.style = {ForestGreen, thick},
  Qs/.style ={black!50!white, opacity=1, every node/.style={black, opacity=1,
      font  = \scriptsize}}, 
  basepoint/.style ={black, opacity = 1, every node/.style={black}},
  pfeil/.style = {->, every node/.style = { font = \scriptsize}}
}

\tikzset{
  Subset/.style={
    draw=none,
    every to/.append style={
      edge node={node [sloped, allow upside down, auto=false]{$\subset$}}}
  }
}

\tikzset{
  Cblue/.style = { MidnightBlue, thick},
  Corange/.style = {Orange, thick },
  Cgreen/.style ={ForestGreen,thick},
  Cblack/.style ={Black,thick},
}

\usepackage[unicode,bookmarks, backref = page]{hyperref}
\hypersetup{colorlinks=true,citecolor=NavyBlue,linkcolor=NavyBlue,urlcolor=Orange, pdfpagemode=UseNone, breaklinks=true}

\newcommand{\caseP}[1]{$(P_{#1})$}
\newcommand{\casedP}{$(dP)$}

\usepackage{rotating} 
\usepackage{tabularx}

\usepackage{frcursive}
\usepackage[T1]{fontenc}

  \title{Gorenstein stable surfaces with $K_X^2 =1$ and $ \chi(\ko_X) =2$}
  \author{Anh Thi Do\footnote{FB 12/Mathematik und Informatik, Philipps-Universit\"at Marburg, 
 Hans-Meerwein-Str. 6,
 35032 Marburg, 
 Germany, ando@mathematik.uni-marburg.de} \and S\"onke Rollenske\footnote{FB 12/Mathematik und Informatik, Philipps-Universit\"at Marburg, 
 Hans-Meerwein-Str. 6,
 35032 Marburg, 
 Germany, rollenske@mathematik.uni-marburg.de}}
  \date{}

\begin{document}

\maketitle

\begin{abstract}
 We classify --- as far as possible --- Gorenstein stable surfaces with $K_X^2 = 1$ and $\chi(\ko_X) = 2$, describing several strata in the moduli space quite in detail. 
\end{abstract}

\setcounter{tocdepth}{2}
\tableofcontents


\section{Introduction}
In this article we complement the work of Franciosi and Pardini with the second author \cite{FPR15a,FPR15inv, FPR17, FPR18} on Gorenstein stable surface with small invariants with a detailed study of Gorenstein stable surfaces with $K_X^2 = 1$ and $\chi(\ko_X) = 2$.

The moduli space of stable surfaces is a modular compactification of Gieseker's moduli space of (canonical models of) surfaces of general type \cite{gieseker1977global}. 
Even though the actual construction of the moduli space was delayed for several decades because of formidable technical obstacles to be overcome, it was clear from the beginning \cite{ksb88}  that the objects parametrised by $\overline{\gothM}$ should be surfaces with semi-log-canonical singularities and ample canonical divisor, for short \emph{stable surfaces}. Nowadays, the existence of the compactification is known, and it is worthwhile to study individual components to get a feeling for the geometry of stable surfaces. Here we restrict to the open subset $\overline{\gothM}^{(\text{Gor})}$ parametrising Gorenstein stable surfaces where the canonical divisor is Cartier. 

 It had been classically known that canonical models of smooth surfaces with our fixed invariants are complete intersections of degree $(6,6)$ in weighted projective space $\IP(1,2,2,3,3)$ and that the bicanonical map realises them as
quadruple covers of the projective plane \cite{catanese79, Cat80, CE96}. This description extends to Gorenstein stable case \cite{FPR17, thesisAnh}. 

Although we have these two simple geometric descriptions and the small invariants do not leave much room for numerical cases, we find a wealth of different strata in 
$\overline{\gothM}^{(\text{Gor})}_{1,2}$, which we usually distinguish by the geometry of the minimal resolution. An overview is given in Table \ref{tab: cases}.  Many cases had been identified previously via examples in \cite{FPR17}, usually constructed as bi-double covers (see section \ref{sect: bidouble}). Our contribution consists in carrying this analysis further and providing, if humanly possible, a description of the general member of each  stratum. More often than not this consists in exploring the geometry of a particular class of surfaces containing a particular configuration of curves. In particular,  the case of non-normal surfaces with normalisation the symmetric product of an elliptic curves has finally gotten an explicit realisation via equations.

\begin{sidewaystable} \caption{Overview over the  cases}
 \label{tab: cases}
 \begin{center}
  \begin{tabular}{ c c c c c c}
\toprule 
\multicolumn{6}{c}{normal cases}\\
\midrule
min. resolution &  $\kappa(\tilde{X})$  	&	 $ p_g(\tilde X)$ & $q(\tilde X)$ &  \textbf{Elliptic sing.} $(d_1,..,d_r)$ & Reference \\
\midrule 
   gen. type &  2 & 1 & 0   &   & \\
    \multirow{2}*{min. prop. ell.} & 1 & 1 & 1 & (1) & 
     Section \ref{typeA}  \\
    & 1 & 0 & 0 & (1) & 
     Section \ref{typeB}  \\
  Enriques & 0 & 0 & 0&(2)& Section \ref{sect: Enriques case} \\
  Abelian variety   & 0 & 1 &2& (1,1)& Section \ref{sect: torus case}\\
   Bielliptic   & 0 & 0 & 1 & (1,1) & Section \ref{sect: bielliptic case}\\
 rational&  $-\infty$ & 0  & 0 &	 $(d)$ &  Section \ref{sect: kappa negative case}\\
 Ruled over ell. curve & $ -\infty $  & 0& 1  &$(d_1,d_2)$&  Section \ref{sect: kappa negative case}\\
 \bottomrule
 ~\\
 ~\\
 ~\\
~\\
 \toprule
\multicolumn{6}{c}{non-normal cases}\\
\midrule
normalisation $\bar X$ & \multicolumn{4}{c}{(general) conductor} & Reference \\
\midrule 
{$\IP^2$} & \multicolumn{4}{c}{special quartic with at least three nodes}  &{Section \ref{sect: caseP}} \\
  del Pezzo of degree $1$\footnote{possibly with one elliptic singularity} & \multicolumn{4}{c}{bi-elliptic curve in $|-2K_{\bar X}|$}   & {Section \ref{sect: cased dP and E_}}\\
 symmetric square of elliptic curve & \multicolumn{4}{c}{$D\in |3C_0-F|$ curve of genus $2$} &  Section \ref{sect: caseE}\\
    \bottomrule
  \end{tabular}
\end{center}  
\end{sidewaystable}
 
\subsection*{Acknowledgments.}
This article grew out of the first authors PhD thesis supported by German Academic Exchange Service (DAAD). 
The second author is grateful for partial  support of the DFG through the Emmy Noether program. 

We are grateful to Andreas Krug for help with Lemma \ref{lem: lemma for type A} and to Hans-Christian von Bothmer for invaluable hints how to perform the computer algebra computation necessary in Section \ref{sect: caseE}. We also enjoyed discussions around the subject of this article with Ben Anthes, Marco Franciosi, and  Rita Pardini.


\section{Preparations}
In this section we recall some definitions and give a short overview over the previous results on  Gorenstein stable surface $X$ with $K_X^2 =1$ and $\chi(\ko_X) =2$.

\subsection{Stable surfaces}
Our main reference for the material presented here is \cite[Section 5]{Kollar2013}, see also \cite{kollar12}. 

A demi-normal scheme is a finite type scheme $X$ over $\IC$ that satisfies the following
conditions:
\begin{enumerate}
\item $X$ satisfies the $S_2$  condition, i.e.,  for every $x \in X$ we have
\[ \text{depth}_{\ko_{X,x}} (\ko_{X,x}) \geq \text{min}\{2, \dim(\ko_{X,x})\}\]
\item At each point $x$ of codimension one in $X$, $x$ is either regular or is an ordinary double point.
\end{enumerate}
We denote by $\pi \colon \bar X \to X$ the normalisation of $X$. The conductor ideal
$ \ki_{\bar D} = \shom(\pi_*\ko_{\bar X}, \ko_X)$ is an ideal sheaf both in $\ko_{X}$ and
$\ko_{\bar X}$ and as such defines subschemes $D \subset X$ and $\bar D \subset
\bar X$, both reduced and of codimension one. We often refer to $D$ as the non-normal locus of $X$.
\begin{defin}
The demi-normal surface $X$ is said to have \textit{semi-log-canonical (slc)}   singularities if it satisfies the following conditions:
\begin{enumerate}
\item The canonical divisor $K_X$ is $\IQ$-Cartier,
\item The pair $(\bar X, \bar D)$ has log-canonical (lc) singularities.
\end{enumerate}
\end{defin}
It is called a stable surface if in addition $K_X$ is ample. In that case we
define the geometric genus of $X$ to be $p_g(X) = h^0(X, \omega_X) = h^2(X,
\ko_X)$ and the irregularity as $q(X) = h^1(X, \omega_X) = h^1(X, \ko_X)$. A
Gorenstein stable surface is a stable surface such that $K_X$ is an ample Cartier divisor.

\
We will discuss surfaces in the following hierarchy of open inclusions of moduli spaces of surfaces with fixed invariants $a=K_X^2$ and $b=\chi(\ko_X)$:
\begin{center}
\begin{tikzpicture}[commutative diagrams/every diagram]
\matrix[matrix of math nodes, name=m] {
\gothM_{a,b}\\
\overline\gothM_{a,b}^{(Gor)}\\ \overline\gothM_{a,b}\\
};
\path[commutative diagrams/.cd, every arrow, every label]
(m-1-1) edge[commutative diagrams/hook] (m-2-1)
(m-2-1) edge[commutative diagrams/hook] (m-3-1);
\path (m-1-1) ++ (1,0) node[right] {= \text{Gieseker moduli space of surfaces of general type}};
\path (m-2-1) ++ (1,0) node[right] {= \text{moduli space of Gorenstein stable surfaces}};
\path (m-3-1) ++ (1,0) node[right] {= \text{moduli space of stable surfaces}};
\end{tikzpicture}
\end{center}
The openness of second inclusion follows from \cite[Cor.~3.3.15]{Bruns-Herzog}.
For the time being there is no  self-contained reference for the existence of the moduli space of stable surfaces with fixed invariants as a projective scheme, and we will not use this explicitly. A major obstacle in the construction is that in the  definition of the moduli functor one needs additional conditions beyond flatness to guarantee that invariants are constant in a family. For Gorenstein surfaces these problems do not play a role; we refer to \cite{kollar12} and the forthcoming book \cite{KollarModuli} for details.

\subsection{Gorenstein stable surfaces with $K_X^2 = 1$ and $\chi(X) = 2$}
The following result from \cite{FPR17} extends the classical case \cite{catanese79}.
\begin{thm}\label{thm: structure from FPR}
 Let $X$ be a Gorenstein stable surface with $K_X^2 = 1$ and $\chi(X) = 2$. Then $q(X)=0$ and the canonical ring $R(X, K_X)\isom \IC[x, y_1, y_2, z_1, z_2]/(f_1,f_2)$, where $\deg x = 1, \deg y_i = 2, \deg z_i =3$ and 
  \begin{equation}
    \label{CanRing}
    \begin{split}
    f_1 &= z_1^2 + z_2x_0a_1(x_0, y_1, y_2) + b_1(x_0, y_1, y_2) \\
    f_2 &= z_2^2 + z_1x_0a_2(x_0, y_1, y_2) + b_2(x_0, y_1, y_2)
    \end{split}
  \end{equation}
are weighted homogeneous of degree $6$ and $b_1, b_2 $ have no common factor. Hence,   $X$ is canonically embedded as a complete intersection of bidegree $(6,6)$ in (the smooth locus of) $\IP(1,2,2,3,3)$. 

The moduli space $\overline\gothM_{1,2}^{(\text{Gor})}$ is irreducible and rational of dimension $18$. 
\end{thm}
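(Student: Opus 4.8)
The plan is to realise $\overline\gothM_{1,2}^{(\text{Gor})}$ explicitly as a quotient of a space of equations by a group of coordinate changes, and to read off the three assertions from that description. By the structure result just stated, every such $X$ is canonically embedded in $\IP(1,2,2,3,3)$ by an ideal generated by a pair $(f_1, f_2)$ in the normal form \eqref{CanRing}, which is determined by the coefficient data $(a_1, a_2, b_1, b_2)$: each $a_i$ is a weighted form of degree $2$ in $x_0, y_1, y_2$ (three coefficients) and each $b_i$ is a weighted form of degree $6$ in $x_0, y_1, y_2$ (ten coefficients). Thus the admissible data form a dense open subset $U$ of an affine space $P$ of dimension $3+3+10+10 = 26$ (the conditions that $b_1, b_2$ have no common factor and that $X$ be stable are open). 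Since $P$ is irreducible and $\overline\gothM_{1,2}^{(\text{Gor})}$ is the image of $U$ under the natural classifying map whose fibres are the orbits of a connected group, irreducibility is immediate once I identify the equivalence relation as such a group action.

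I would next describe this group $G$. Two data define isomorphic surfaces precisely when they differ by an automorphism of $\IP(1,2,2,3,3)$ preserving the shape of \eqref{CanRing}, after rescaling $f_1, f_2$ so that the coefficients of $z_1^2, z_2^2$ stay equal to $1$. Sorting the graded automorphisms of $\IC[x_0, y_1, y_2, z_1, z_2]$ that respect this normal form, the connected group $G$ is generated by the scaling of $x_0$, the $\mathrm{GL}_2$ acting on $(y_1, y_2)$ together with the shifts $y_i \mapsto y_i + c_i x_0^2$, and the two scalings of $z_1, z_2$; one checks that a single diagonal $\IC^*$ acts trivially, so $G$ acts effectively with $\dim G = (1+4+2+2) - 1 = 8$. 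Because $X$ is canonically embedded, the $G$-stabiliser of a datum is contained in $\mathrm{Aut}(X)$, which is finite since $K_X$ is ample; hence $G$ acts with finite stabilisers on $U$. Therefore $\overline\gothM_{1,2}^{(\text{Gor})}$ is birational to $U/G$ and has dimension $26 - 8 = 18$. As a cross-check I would recompute this intrinsically: the degree-$6$ piece of the ring has dimension $19$, the $(6,6)$-complete intersections are parametrised by an open subset of $\mathrm{Gr}(2,19)$ of dimension $34$, and $\dim \mathrm{Aut}(\IP(1,2,2,3,3)) = 16$, giving again $34 - 16 = 18$.

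The remaining, and hardest, point is rationality, since $G$ is not solvable: it contains the $\mathrm{GL}_2$ acting on $(y_1, y_2)$. My plan is to produce a rational section of $U \to U/G$ by using the non-solvable part of $G$ to rigidify the low-degree data. For generic data the pair $(a_1, a_2)$ is linearly independent in the three-dimensional space $\langle x_0^2, y_1, y_2\rangle$, and I expect the $\mathrm{GL}_2$, the shifts, and the $z$-scalings to move $(a_1, a_2)$ into a fixed normal form (say $(y_1, y_2)$) on a dense open set; the stabiliser $G_0$ of this normal form is then a connected solvable (essentially toric) subgroup of dimension $8 - 6 = 2$, acting linearly on the remaining $20$-dimensional space of coefficients $(b_1, b_2)$. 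Quotients of a vector space by a connected solvable linear group acting linearly are rational (Miyata), so $\mathbf{A}^{20}/G_0$ — and hence $U/G$ and $\overline\gothM_{1,2}^{(\text{Gor})}$ — is rational. The genuine obstacle I anticipate is the bookkeeping needed to verify that $(a_1, a_2)$ really can be normalised on a dense open set, i.e.\ that the $G$-orbit of generic $(a_1, a_2)$ is open, to compute $G_0$ precisely and confirm it is connected solvable, and to ensure the residual finite stabiliser does not spoil rationality; this is the part that must be carried out by hand rather than quoted.
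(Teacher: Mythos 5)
There is a genuine gap, and it is structural: your argument is circular with respect to most of the statement. You open with ``by the structure result just stated'' --- but that structure result ($q(X)=0$, the canonical ring generated in degrees $1,2,2,3,3$ with exactly two relations of degree $6$ in the normal form \eqref{CanRing}, with $b_1,b_2$ coprime) \emph{is} the main assertion of the theorem, and it is nowhere established in your proposal. Note that the paper itself offers no proof either: the theorem is imported wholesale from \cite{FPR17}, where the structural half is the hard part. It requires computing $p_g$ and $q$, and proving generation and relation statements for the pluricanonical ring of a Gorenstein stable --- possibly non-normal, slc --- surface, extending the smooth analysis of \cite{catanese79}; the completion of squares that produces the precise shape of $f_1,f_2$, and the coprimality of $b_1,b_2$ (which guarantees the pair of sextics cuts out a surface avoiding the singular points of $\IP(1,2,2,3,3)$), all live in that analysis. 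Without it your affine space $P$ of coefficient data has no claim to dominate the moduli space, so none of the three moduli assertions actually gets off the ground.

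Granting the structure result, the moduli half of your proposal is essentially the known argument, and your numerology is correct: $3+3+10+10=26$ coefficients, an $8$-dimensional connected group of admissible coordinate changes, $26-8=18$; the cross-check $\dim\mathrm{Gr}(2,19)=34$ and $\dim\Aut(\IP(1,2,2,3,3))=16$ is also right. Beyond the loose ends you flag yourself (openness of the normalisable locus for $(a_1,a_2)$, identification of $G_0$), two points need real care. First, the fibres of $U\to\overline\gothM_{1,2}^{(\text{Gor})}$ are orbits of the \emph{full} group of coordinate changes preserving the normal form, which is disconnected: for instance the swap $(z_1,f_1,a_1,b_1)\leftrightarrow(z_2,f_2,a_2,b_2)$ lies outside your connected $G$. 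Disconnectedness is harmless for irreducibility and dimension, but after normalising $(a_1,a_2)=(y_1,y_2)$ the residual group is an extension of your two-torus by a finite group, and rationality of a quotient by a non-connected group does \emph{not} follow from Miyata --- quotients of rational varieties by finite groups need not be rational, so this is exactly where the claimed proof could fail and must be argued by hand. Second, you should justify that abstract isomorphisms of the surfaces are induced by graded automorphisms of $\IC[x_0,y_1,y_2,z_1,z_2]$; this is standard because the embedding is canonical, but it is the step that makes ``fibres $=$ orbits'' true and should be said.
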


\begin{cor}\label{cor: bicanonical section defines a quadruple cover}
The bi-canonical map $\varphi: X \stackrel{|2K_X|}{\longrightarrow} \IP^2$ is a quadruple cover.
\end{cor}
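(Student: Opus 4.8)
The plan is to read off the complete linear system $|2K_X|$ directly from the presentation of the canonical ring in Theorem~\ref{thm: structure from FPR}, and then to verify in turn that the associated map is a morphism, that it is finite, and that its degree equals $4$.

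First I would identify $H^0(X,2K_X)$. Since the theorem gives $q(X)=0$, we have $p_g(X)=\chi(\ko_X)-1=1$, so $H^0(K_X)=\langle x\rangle$ is one-dimensional. The degree-$2$ graded piece of $R(X,K_X)\isom \IC[x,y_1,y_2,z_1,z_2]/(f_1,f_2)$ is spanned by $x^2, y_1, y_2$, as the relations $f_1,f_2$ sit in degree $6$ and impose no condition in degree $2$; equivalently, Riemann--Roch together with the vanishing $h^i(2K_X)=0$ for $i>0$ gives $h^0(2K_X)=\chi(\ko_X)+K_X^2=3$. Hence $\varphi$ is induced by the complete linear system $|2K_X|$ and is the map $[x^2:y_1:y_2]$ to $\IP^2$.

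Next I would show $|2K_X|$ is base-point free, so that $\varphi$ is a genuine morphism. A base point would be a point of $X\subset\IP(1,2,2,3,3)$ with $x=y_1=y_2=0$. Substituting this into \eqref{CanRing}, every monomial of $a_i$ and $b_i$ vanishes, so $f_1$ reduces to $z_1^2$ and $f_2$ to $z_2^2$; such a point would therefore also satisfy $z_1=z_2=0$, forcing all homogeneous coordinates to vanish, which is impossible. Thus $\varphi\colon X\to\IP^2$ is a morphism with $\varphi^*\ko_{\IP^2}(1)=\ko_X(2K_X)$. Because $K_X$ is ample we have $2K_X\cdot C>0$ for every curve $C\subset X$, so no curve is contracted; being proper (as $X$ is projective) with finite fibres, $\varphi$ is finite, and since $\dim X=2$ its image is a two-dimensional closed subset of $\IP^2$, hence all of $\IP^2$.

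Finally I would compute the degree. For a finite surjective morphism of projective surfaces one has $(\varphi^*H)^2=\deg(\varphi)\,H^2$ for a line $H\subset\IP^2$; this is legitimate here since $K_X$ is Cartier, so the intersection numbers are well defined. As $H^2=1$ and $\varphi^*H\sim 2K_X$, we obtain $\deg(\varphi)=(2K_X)^2=4K_X^2=4$. Given the structure theorem, the only step carrying genuine content is the base-point freeness: it is precisely there that the specific shape of $f_1,f_2$ — that $z_1^2$ and $z_2^2$ occur with all remaining terms divisible by $x, y_1, y_2$ — is used. The dimension count and the degree are then formal consequences of $K_X^2=1$, $\chi(\ko_X)=2$, and the ampleness of $K_X$.
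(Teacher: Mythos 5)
Your proof is correct, and while it starts from the same place as the paper's proof --- the presentation of $R(X,K_X)$ in Theorem \ref{thm: structure from FPR} --- it establishes the two key facts (that $\varphi$ is a morphism, and that its degree is $4$) by different means. The paper's entire proof is the observation that $\varphi$ is the map on $\mathrm{Proj}$ induced by the ring inclusion $\IC[x^2,y_1,y_2]\into R(X,K_X)$: the shape of \eqref{CanRing} makes $R(X,K_X)$ a finite module over $\IC[x^2,y_1,y_2]$ (each equation expresses $z_i^2$ in terms of elements of the ideal $(x^2,y_1,y_2)$), so the induced map is everywhere defined and finite, and over a general point of $\IP^2$ the fibre is cut out by two equations, monic of degree $2$ in $z_1$ and in $z_2$ respectively, giving $2\cdot 2=4$ points. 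You use the same structural feature of the equations, but only to prove base-point-freeness of $|2K_X|$; you then get finiteness from ampleness of $K_X$ (no contracted curves) and compute the degree numerically, $\deg\varphi\cdot H^2=(2K_X)^2=4K_X^2=4$, by the projection formula. Both arguments are sound. Yours is more self-contained, spelling out the steps (base-point-freeness, finiteness, surjectivity) that the paper's one-line proof leaves implicit; the paper's formulation buys the algebraic structure of the cover --- $R(X,K_X)$ as a rank-$4$ algebra over the homogeneous coordinate ring of $\IP^2$ --- which is exactly what feeds into the Casnati--Ekedahl theory of quadruple covers mentioned in the remark that follows the corollary. One small point: your primary identification $h^0(2K_X)=\dim R_2=3$, read off from the graded presentation (the relations sit in degree $6$), is the right one and suffices; the alternative via Riemann--Roch is fine numerically but quietly requires $h^1(2K_X)=0$, which on an slc surface is a non-trivial vanishing statement that the paper never needs to invoke here.
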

\begin{proof}
It follows from the two equations of $X$ in the previous theorem that the map $|2K_X|: X \to \IP^2$ is induced from the inclusion of rings $\IC[x^2, y_1, y_2] \into R(K_X)$.
\end{proof}

\begin{rem}
 There is a well developed theory of Gorenstein quadruple covers by Casnati and Ekedahl \cite{CE96} and the exact relation to the structure equations of the  canonical ring was worked out in the first authors PhD thesis \cite{thesisAnh}. Unfortunately, we were not able to convert this alternative description into results. 
\end{rem}

\subsubsection{Automorphisms}
Surfaces with many symmetries are easier to deal with, so we describe,  which surfaces admit automorphisms. This was discussed for smooth surfaces also in \cite[\S 1, Prop. 10]{catanese79}. 
\begin{prop}\label{prop: aut}
Let $X$ be a Gorenstein stable surface with $K_X^2 = 1$ and $\chi = 2$. Then 
\item[1)] $\Aut(X/\IP^2)$ is one of groups $0, \IZ/2, (\IZ/2)^2$.
\item[2)] $\Aut(X/\IP^2) = (\IZ/2)^2$ if and only if one can choose $a_1 = a_2 = 0$ in the equations \eqref{CanRing}.
\item[3)] $\Aut(X/\IP^2) = \IZ/2$ if and only if only one can choose one of the $a_i = 0$ but not both.
\end{prop}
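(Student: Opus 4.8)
The plan is to analyze the action of $\Aut(X/\IP^2)$ on the canonical ring. Since the automorphisms in question fix the bicanonical map to $\IP^2$, they act trivially on the subring $\IC[x^2, y_1, y_2]$ (equivalently on $x^2, y_1, y_2$), so they must preserve the weighted grading and act on the remaining generators. The key observation is that any such automorphism is determined by its action on $x, z_1, z_2$, and that the structure of the equations \eqref{CanRing} forces this action into a very rigid form. I would first argue that an element of $\Aut(X/\IP^2)$ acts on $x$ by $x \mapsto \pm x$ (since $x^2$ is fixed and $\deg x =1$), and on the degree-$3$ generators $z_1, z_2$ by a linear transformation compatible with the fact that $z_1^2$ and $z_2^2$ are the leading terms of $f_1$ and $f_2$.

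\smallskip

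For part 1), the strategy is to observe that $z_1^2$ appears in $f_1$ and $z_2^2$ in $f_2$, while the cross terms $z_2 x a_1$ and $z_1 x a_2$ are the only other occurrences of the $z_i$. An automorphism preserving the ideal $(f_1, f_2)$ and the grading must send $\{z_1, z_2\}$ to eigenvectors, hence (after using that the $b_i$ have no common factor and live in $\IC[x, y_1, y_2]$) to $\pm z_1, \pm z_2$, possibly with a sign change on $x$. Enumerating the compatible sign choices and checking which ones can genuinely fix the equation-defining data yields a group that embeds into $(\IZ/2)^2$, generated by the involutions $z_i \mapsto -z_i$. This pins down $\Aut(X/\IP^2)$ as a subgroup of $(\IZ/2)^2$, giving the three possibilities $0$, $\IZ/2$, $(\IZ/2)^2$.

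\smallskip

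For parts 2) and 3), I would make the involutions explicit. The candidate generators are $\sigma_1\colon z_1 \mapsto -z_1$ and $\sigma_2\colon z_2 \mapsto -z_2$ (fixing the other generators). Checking when $\sigma_1$ preserves the ideal: it fixes $f_2$ up to the $z_1 x a_2$ term, which it negates, so $\sigma_1$ is an automorphism precisely when $a_2 = 0$; symmetrically $\sigma_2$ requires $a_1 = 0$. Thus $(\IZ/2)^2 \subseteq \Aut(X/\IP^2)$ iff both $a_1 = a_2 = 0$ can be arranged, and $\IZ/2 \subseteq \Aut(X/\IP^2)$ iff exactly one $a_i$ vanishes. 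The delicate point — and what I expect to be the main obstacle — is the phrase \emph{``one can choose''}: the $a_i, b_i$ are not canonically determined but only defined up to the coordinate changes preserving the normal form \eqref{CanRing} (rescalings, and in particular the substitution $z_i \mapsto z_i + (\text{quadratic in } x, y)$). So I must show that the vanishing of $a_i$ is invariant under, or achievable by, exactly this ambiguity group, i.e. that an abstract automorphism forces $a_i$ into the normal form with the corresponding entry zero. Handling this reduction carefully — classifying the allowed changes of variables and verifying that they cannot spuriously create or destroy the $a_i$ terms except as claimed — is the technical heart of the argument.
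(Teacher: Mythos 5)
Your overall strategy coincides with the paper's (act on the canonical ring, note that automorphisms over $\IP^2$ fix $\IC[x^2,y_1,y_2]$ pointwise, reduce to sign actions on $z_1,z_2$, and exhibit explicit involutions for the "if" directions), and your bookkeeping $\sigma_1\in\Aut(X/\IP^2)\iff a_2=0$, $\sigma_2\in\Aut(X/\IP^2)\iff a_1=0$ is correct --- indeed more careful than the paper's own phrasing, which swaps the indices. However, the step you present as an observation --- that an automorphism ``must send $\{z_1,z_2\}$ to eigenvectors, hence to $\pm z_1,\pm z_2$'' --- is precisely the content of the proposition, and it is nowhere proved. A priori an element of $\Aut(X/\IP^2)$ could mix $z_1$ and $z_2$ (a swap-type action), act with an affine shift $z_i\mapsto \pm z_i + x\,\ell_i(x^2,y_1,y_2)$, or act with eigenvalue $\pm i$, giving $\IZ/4$. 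Ruling all of this out requires genuine input: one first shows that each $f_i$ is \emph{individually} fixed, not merely the ideal --- write $g f_1=\lambda f_1+\mu f_2$, project to the $z$-free part, and use that $b_1,b_2$ are linearly independent because they have no common factor --- and then a coefficient comparison, again invoking the no-common-factor hypothesis, eliminates swaps, shifts, and fourth roots of unity. The paper packages this differently: it chooses a $G$-invariant complement $U\subset R_3$, deduces $f_i\in R_6^G$, excludes $\IZ/4$ because $S^2U$ of a faithful two-dimensional $\IZ/4$-representation has no two-dimensional invariant subspace, and only then diagonalizes an involution on $U$. Neither version of this argument appears in your proposal, so part 1) and the $\IZ/4$ exclusion are unsupported.

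The second gap is one you flag yourself: the ``one can choose'' analysis for the only-if directions of 2) and 3) is explicitly deferred, so by your own account the proposal is a plan rather than a proof. The paper resolves this point by passing to eigencoordinates of the involution, reading off which degree-$6$ monomials can occur in an invariant relation, and completing the square ($z_1\mapsto z_1+x\tilde a_1$) to return to the normal form \eqref{CanRing} with $a_1=0$. Let me point out that your two difficulties are really one, and they sit in the step you assert rather than the step you defer: if you prove the diagonalization-by-signs claim in an \emph{arbitrary} system of normal-form coordinates, then the coordinate ambiguity evaporates --- for instance, $\Aut(X/\IP^2)=(\IZ/2)^2$ forces all three nontrivial sign classes to preserve the ideal, which forces $a_1=a_2=0$ in the coordinates you started with, with no classification of allowed coordinate changes needed. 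As it stands, though, the proposal asserts its central claim and defers its stated ``technical heart,'' so it does not yet constitute a proof.
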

\begin{proof}
First of all note that in case $a_1 =0$ in the coordinates as in \eqref{CanRing}, then 
\[(x,y_1, y_2, z_1, z_2) \mapsto (x,y_1, y_2, -z_1, z_2)\]
defines a $\IZ/2$-action on the canonical ring and on $X$ over $\IP^2$, and similarily for $a_2 = 0$. So it remains to exclude the case $\IZ/4$ and to prove the ``only if'' part.

Let $G$ be a non-trivial group acting on $X$ over $\IP^2$, so that we have a factorisation $X \to X/G \to \IP^2$. Since the bi-canonical map is of degree $4$, we see that the order of $G$ is  $2$ or $4$.
The action of $G$ on $X$ induces an action on the canonical ring $R$, which leaves $R_2$ invariant, that is $\IC[x^2, y_1, y_2] \subset R^G$.

Our arguments now rely on the examination of the decompositions  of $R_3$ and $R_6$ as $G$-representations:
\begin{align*}
 R_1 & = \langle x\rangle\\
 R_2 &= R_1^{\tensor 2} \oplus \langle y_1, y_2\rangle \text{ trivial representation}\\
 R_3 &= R_1^{\tensor 3} \oplus R_1\tensor\langle y_1, y_2\rangle \oplus U, \quad \text{ $U$ possibly reducible but effective }\\
 R_6 &= S^2 U \oplus R_1\tensor R_2\tensor U \oplus S^3R_2/\langle f_2, f_2\rangle
\end{align*}
Since the relations $f_i$ are of the form \eqref{CanRing}, we see that they both contain a non-trivial $G$-invariant summand, namely the $b_i$ and thus are in the subspace $R_6^G$. Since $z_i \not \in R_1\tensor 
R_2$ the projection $R_6 \to S^2 U$ gives a $2$-dimensional $G$-invariant subspace of $S^2U$ spanned by the images of the $z_i ^2$. 

It is easy 
 to see that for an effective, $2$-dimensional  $\IZ/4$-representation $U$ this cannot happen, so $G\neq \IZ/4$.

Let $\sigma \in G$ be a non-trivial element with necessarily $\sigma^2=1$. Let $z_1, z_2$ be eigenvectors for the action of $\sigma$ on $U$. This possibly changes our choice of coordinates, but we will shortly see that the form of equations \eqref{CanRing} is maintained. 
Then 
 \begin{align*}
 \sigma: \IC[x, y_1, y_2, z_1, z_2] &\to \IC[x, y_1, y_2, z_1, z_2]\\
 (x, y_1, y_2, z_1, z_2) &\mapsto v = (\pm x, y_1, y_2, \pm z_1, \pm z_2)
 \end{align*}
 Note that the action on $X$ is nontrivial and that, up to  the (weighed)
 $\IC^*$-action and renumbering,  we have the following 3 cases to consider:
 \begin{itemize}
 \item $v= (x,y_1, y_2, z_1, z_2). $ In this case $\sigma $ acts trivially on $X$ which was excluded.
 \item $v= (-x,y_1, y_2, -z_1, z_2). $  In this case, the only invariant monomials of degree $6$ involving $z_i$ are
 $z_1^2, z_2^2, xz_1$, so we have relations of the form
 \[ z_1^2 + 2\tilde a_1xz_1 +\tilde  b_1 = (z_1+xa_1)^2 + \tilde b_1', \quad z_2^2 + \tilde a_2 xz_1 + \tilde b_2,\]
 which after a coordinate change give the desired equation with $a_1 = 0$.
 
 \item $v = (-x, y_1, y_2, z_1, z_2)$.
  In this case, the only invariant monomials of degree $6$ involving $z_i$ are
 $z_1^2, z_1z_2, z_2^2$, so we have relations of the form
 \[ z_1^2 +  b_1 , \quad z_2^2  + b_2,\]
 after possibly a linear change of coordinates in the subspace $U$.
 \end{itemize}
 To conclude we only have to note that in the case of an effective
 $(\IZ/2)^2$-action on $X$, we will necessarily have an involution which acts as
 in the last case. 
\end{proof}


\subsubsection{Bi-double covers}\label{sect: bidouble}

The easiest way to construct examples is often to consider the case where the bicanonical map is a  $\IZ_2^2$-cover (``bi-double cover''). Non-normal abelian covers are studied in  \cite{alexeev-pardini12} in detail but we extract the minimum needed in our special case from the discussion in \cite[Sect. 5.1]{FPR17}, to which we refer for more details.

\begin{prop}
 A Gorenstein stable surface $X$ with $K_X^2 = 1$,  $\chi(X)  = 2$ and $\Aut(X/\IP^2) = \IZ/2^2$ is equivalent to the datum of a line $D_0$ and two cubic curves $D_1, D_2$ in the plane, the \emph{building data}, such that
 \begin{enumerate}
  \item the pair $\left( \IP^2, \Delta\right)$ is log-canonical, where $\Delta = \frac12(D_0+D_1+D_2)$ is the Hurwitz divisor, 
  \item the triple intersection $D_0\cap D_1\cap D_2 = \emptyset$.
 \end{enumerate}
Furthermore, $X$ is non-normal above an irreducible curve  $\Gamma$ of $\IP^2$ if and only if $\Gamma$ appears in $\Delta$ with coefficient  $1$.
\end{prop}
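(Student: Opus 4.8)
The plan is to read the cover structure straight off the equations \eqref{CanRing} and then specialise the theory of (possibly non-normal) abelian covers recalled in \cite[Sect.~5.1]{FPR17} and \cite{alexeev-pardini12}. First I would use Proposition \ref{prop: aut}: the hypothesis $\Aut(X/\IP^2) = (\IZ/2)^2$ lets us choose coordinates with $a_1 = a_2 = 0$, so that $f_1 = z_1^2 + b_1$ and $f_2 = z_2^2 + b_2$ with $b_i = b_i(x,y_1,y_2)$ of weighted degree $6$. Since $\deg x = 1$ and $\deg y_i = 2$, each $b_i$ is a cubic in the plane coordinates $(x^2,y_1,y_2)$, so that $D_1 = \{b_1 = 0\}$ and $D_2 = \{b_2 = 0\}$ are plane cubics while $D_0 = \{x^2 = 0\}$ is a line. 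The bicanonical map $\varphi$ of Corollary \ref{cor: bicanonical section defines a quadruple cover} is then precisely the $(\IZ/2)^2$-cover attached to the sign changes $z_1 \mapsto -z_1$ and $z_2 \mapsto -z_2$, the third nontrivial element acting by $x \mapsto -x$.

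To make the dictionary precise I would compute the eigensheaf decomposition directly from the graded pieces of the canonical ring, obtaining
\[ \varphi_*\ko_X \isom \ko_{\IP^2} \oplus \ko_{\IP^2}(-2) \oplus \ko_{\IP^2}(-2) \oplus \ko_{\IP^2}(-3). \]
The associated line bundles $\ko(2),\ko(2),\ko(3)$, together with the standard relations $2L_i \equiv D_j + D_k$ for abelian covers, then force the branch configuration to be exactly one line $D_0$ and two cubics $D_1,D_2$, and they give $\chi(\ko_X) = \chi(\varphi_*\ko_X) = 2$. Conversely, given building data consisting of a line and two cubics the line bundles are determined by their degrees, and running the cover construction backwards reproduces equations \eqref{CanRing} with $a_1 = a_2 = 0$; this establishes the claimed bijection between surfaces and building data. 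For the stability assertions I would invoke the ramification formula $K_X = \varphi^*(K_{\IP^2} + \Delta)$ with $\Delta = \frac12(D_0+D_1+D_2)$. As $K_{\IP^2}+\Delta$ is $\IQ$-linearly equivalent to $\frac12$ times the hyperplane class, it is ample, whence $K_X$ is ample as a finite pullback of an ample divisor and $K_X^2 = 4\cdot(\frac12)^2 = 1$. Because $\varphi$ is crepant for the pair, $X$ has slc singularities if and only if $(\IP^2,\Delta)$ is lc, which is condition (1).

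Condition (2) I would identify with the Gorenstein condition, i.e.\ $K_X$ Cartier. Away from a common point of all three branch divisors the cover carries at worst the singularities of a smooth-type bidouble cover — ordinary double points over transverse intersections of the branch curves — which are Gorenstein; a local computation over a point $p \in D_0\cap D_1\cap D_2$ should show that the three simultaneous double covers produce a non-Gorenstein slc point, so that $\omega_X$ is locally free precisely when $D_0\cap D_1\cap D_2 = \emptyset$. For the final statement I would use that the normalisation of an abelian cover is again an abelian cover whose branch data is the reduced part of the original (Alexeev--Pardini): the locus where $X$ fails to be $R_1$ lies over a component $\Gamma$ occurring in two of the $D_i$, equivalently with coefficient $\frac12+\frac12 = 1$ in $\Delta$, which is exactly the asserted non-normality criterion.

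The main obstacle is the local analysis underlying condition (2) and the non-normality criterion: one must describe the cover precisely over both the codimension-one strata and the special points of the branch configuration, and match ``triple point'' with ``non-Gorenstein point'' and ``doubled component'' with ``double crossing in codimension one''. The cleanest route is to quote the explicit normal forms for $(\IZ/2)^2$-covers from \cite[Sect.~5.1]{FPR17} and \cite{alexeev-pardini12} rather than rederive them, after which the remaining work is the bookkeeping that translates ampleness, the log canonicity of the plane curve configuration, and the Gorenstein property into the two stated conditions on the building data.
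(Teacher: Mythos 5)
Your proposal is correct in substance, and it is worth noting how it sits relative to the paper: the paper does not actually prove this proposition --- it is extracted from \cite[Sect.~5.1]{FPR17} and \cite{alexeev-pardini12}, the only new content being the dictionary recorded right after the statement, namely that in the coordinates of \eqref{CanRing} with $a_1=a_2=0$ one has $D_0=\{x_0^2=0\}$ and $D_i=\{b_i=0\}$. Your proposal reconstructs the outsourced proof along the same lines: Proposition \ref{prop: aut} to normalise the equations, the eigensheaf decomposition (which is correct --- the four eigenspaces of $R(K_X)$ over $\IC[x^2,y_1,y_2]$ are generated by $1, xz_1, xz_2, z_1z_2$, giving $\ko_{\IP^2}\oplus\ko_{\IP^2}(-2)\oplus\ko_{\IP^2}(-2)\oplus\ko_{\IP^2}(-3)$ and $\chi=2$), the crepant pullback formula, and the identification of condition (2) with Cartier-ness of $K_X$. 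That last point is the real content, and your deferred local computation does work out: if $p\in D_0\cap D_1\cap D_2$ is a transverse triple point, the monodromy around the exceptional curve of the blow-up of $p$ is $\sigma_0\sigma_1\sigma_2=1$, so the cover is \'etale over its generic point, the log discrepancy $\frac{1}{2}$ of $(\IP^2,\Delta)$ lifts to discrepancy $-\frac{1}{2}$ on $X$, and the point above $p$ is the quotient singularity $\frac{1}{4}(1,1)$ (a $(-4)$-curve on the resolution): klt but not Gorenstein. So (2) is exactly the Gorenstein condition, as you guessed.

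One claim in your sketch is wrong as stated and needs repair: ``away from a common point of all three branch divisors the cover carries at worst ordinary double points over transverse intersections of the branch curves.'' Condition (1) allows much worse local configurations --- for instance an ordinary quadruple point of $D_1+D_2$, or three local branches of one $D_i$ meeting one branch of another --- and over such points the cover has elliptic Gorenstein singularities of degree $1$ or $4$; this is precisely what the paper records immediately after the proposition, quoting \cite[Tables~1--4]{alexeev-pardini12}. The conclusion you need (Gorenstein away from triple points) is still true, but it cannot be reached by the ``only nodes'' reasoning; it genuinely requires the Alexeev--Pardini classification, which you do name as the cleanest route, so this is a gap of justification rather than of final correctness. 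A last small point: a curve $\Gamma$ has coefficient $1$ in $\Delta$ not only when it lies on two different $D_i$ but also when it is a doubled component of a single cubic; both cases yield non-normality of $X$ over $\Gamma$, so the final clause of the proposition covers slightly more than your formulation suggests.
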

In terms of the equations \eqref{CanRing}, $x_0^2$ is the equation of the line $D_0$ and the two cubics are given by $D_i = \{ b_i =0\}$.

\begin{rem}\label{rem: normalise}
It is often convenient to find a resolution of a singular bi-double cover by taking a log-resolution of $2\Delta = D_0+D_1+D_2$ and then applying the normalisation algorithm (see \cite[\S 3]{pardini91}) to the pullback of the building data. In our case this is straightforward and consists of two steps: let $\tilde D_i$ be the pullback of $D_i$ and note that the triple intersection is still empty. 
\begin{enumerate}
 \item Reduce the coefficients of $\tilde D_i$ modulo $2$ to get a reduced divisor.
 \item If $\{i,j,k\} = \{0,1,2\}$ and a curve $\Gamma$ is contained in $\tilde D_i$ and $\tilde D_j$, then consider the new building data $\tilde D_i -\Gamma$, $\tilde D_j - \Gamma$, $\tilde D_k+\Gamma$. 
\end{enumerate}
Repeating the second step as necessary, we arrive at a normal bi-double cover, the normalisation of the pullback. 
\end{rem}

The  possible singularities  of slc $(\IZ/2)^r$-covers such that the support of the Hurwitz divisor $\Delta$  has ordinary singularities have been classified in \cite[Table~1--4]{alexeev-pardini12}. In our restricted situation only two different normal singularities 
can occur, because $r=2$ and  $D_1$  and $D_2$ can have at most  three local branches through a point $P$:
\begin{itemize}
 \item $D=2\Delta$ has an ordinary quadruple point at $P$, such that three of the local components are in the same $D_i$. The resulting singularity is an elliptic singularity of degree $1$ (see case 4.5 in loc.cit.).
 \item Both $D_1$ and $D_2$ have an ordinary double point at $P$ such that $D$ has an ordinary quadruple point at $P$. The resulting singularity is an elliptic singularity of degree $4$ (see case 4.6 in loc.cit.).  
\end{itemize}

\section{Strata of normal surfaces}\label{ch: normal surfaces}
We attack the strata parametrising normal surface by direct methods. The starting points are the numerical restrictions following from \cite{FPR15a}.
\begin{theo}\label{thm: normalCase}
  Let $X$ be a normal Gorenstein stable surface with $K^2= 1$  and $\chi(X) = 2$.
  Let $\epsilon\colon \tilde{X} \to X$  and $\eta \colon \tilde X \to  \tilde
  X_{min}$ be a morphism to a minimal model.

  Then only the following cases can occur; we list some invariants  in Table \ref{tab: normal cases}.
  \begin{description}
  \item[$\kappa(\tilde{X}) = 2$] In this case $X$ has canonical singularities and $\tilde X = \tilde X_{min}$ is the corresponding minimal surface of general type.
  \item[$\kappa(\tilde{X}) = 1$] $\tilde{X} = \tilde X_{min}$ is a minimal properly elliptic surface and $X$ has precisely one elliptic singularity of degree $1$.
  \item[$\kappa(\tilde{X}) = 0$] There exists an effective nef divisor $D_{min}$ on $\tilde X_{min}$ and a point $P$ such that:
    \begin{enumerate}
    \item $D^2_{min}= 2$ and $p\in D_{min}$ has multiplicity $2$.
    \item $\eta \colon \tilde{X}\to \tilde X_{min}$ is the blow up at $P$.
    \item $X$ is obtained from $\tilde{X}$ by blowing down the strict transform of $D_{min}$ and it has either one elliptic singularity of degree 2 or two elliptic singularities of degree 1.
    \end{enumerate}
  \item[ $\kappa(\tilde{X}) = -\infty$] There are two possibilities:
    \begin{enumerate}
    \item $\chi({\tilde{X}}) = 1$ and $\tilde{X}$ has  one elliptic singularity;
    \item $\chi(\tilde{X}) = 0$, $\tilde{X}$ has two elliptic singularities; in this case, the exceptional divisors arising from the elliptic singularities are smooth elliptic curves.
    \end{enumerate}
    In both cases, the degree of the elliptic singularities is bounded by $4$.
  \end{description}

\end{theo}
\begin{table}[h]\caption{Strata of normal surfaces}\label{tab: normal cases}
  \begin{center}
    \begin{tabular}{ c c c c c}
      \toprule
      $\kappa(\tilde{X})$  &  \textbf{Elliptic sing.} $(d_1,..,d_r)$	&	 $ \chi(\tilde X)$ & \textbf{type} & Reference\\
      \midrule
      2 &  & 2  & gen. type   & \\
      \midrule
      1 & (1) & 1 & min. prop. ell & Section \ref{sect: properly ell case}\\
      \midrule
      0 & (2) & 1 &Enriques & Section \ref{sect: Enriques case} \\
                           & (1,1) & 0 & Torus  & Section \ref{sect: torus case}\\
                           & (1,1) & 0 & bielliptic  & Section \ref{sect: bielliptic case}\\
      \midrule
      $-\infty$ & $(d)$  & 1 & rational & Section \ref{sect: kappa negative case}\\
                           & $(d_1,d_2)$ & 0 & ruled over ell. curve & Section \ref{sect: kappa negative case}\\
      \bottomrule
    \end{tabular}
  \end{center}
\end{table}
\begin{proof}
  This follows quite directly from \cite[Theorem 4.1]{FPR15a}, combined with the Enriques classification of surfaces.
  Assume that $X$ has $r$ elliptic singularities. Then $\chi(X) - r = \chi(\tilde X) = \chi(\tilde X_{min}) $, so we can identify the number of elliptic singularities required in each of the numerical case.

  To bound the degree of the elliptic singularities, note that by Theorem \ref{thm: structure from FPR} the surface $X$ is embedded as a local complete intersection of codimension two in smooth variety. By the classification of elliptic singularities \cite{reid97}, or lci slc singularities (see \cite[Lemma 2.6]{tziolas09}) this excludes elliptic points of degree higher than 4.
\end{proof}
We will now consider some of these numerical cases in more detail.

\subsection{Surfaces with properly elliptic minimal resolution}\label{sect: properly ell case}
We now consider the following situation: Let $X$ be a normal Gorenstein stable surface with $K_X^2 = 1$ and $\chi(X) = 2$ such that its minimal resolution $\tilde X$ has Kodaira dimension $1$. Then by Theorem \ref{thm: normalCase} the surface $\tilde X$ is minimal properly elliptic and $\epsilon \colon \tilde X \to X$ contracts a unique  curve $E$---smooth elliptic or a cycle of rational curves---with $E^2 = -1$ and possibly some ADE configurations.
\begin{lem}\label{lem: invariants properly elliptic case AB}
  Let $\pi\colon\tilde X \to B$ be the elliptic fibration on $\tilde X$. Consider the sheaf $R^1\pi_*\ko_{\tilde{X}}$ and  denote its dual by $L : = {R^1\pi_*\ko_{\tilde X}}^{\vee} =\pi_*\omega_{\tilde X/B}$. Then we have
  \[ \epsilon^*K_X = K_{\tilde X} + E, K_{\tilde X}^2 = 0,
  \;
  E^2 = -1, K_{\tilde X} E = 1,\;
  \chi(\tilde X) = 1,\;
  p_g(\tilde X) = q(\tilde X), \]
  $L$ is a line bundle on $B$ of degree $1$ and one of the following cases occurs
  \begin{description}
  \item[Type A] $p_g(\tilde X) = q(\tilde X) = 0 = g(B)$
  \item[Type B] $p_g(\tilde X) = q(\tilde X) = 1 = g(B)$
  \end{description}
  Moreover, in both cases we have $h^0(2K_{\tilde X})=2$.
\end{lem}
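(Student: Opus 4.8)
The plan is to treat the three groups of assertions in turn: the intersection‑theoretic data, the fibration invariants $\deg L$ and $q=g(B)$, and finally the plurigenus $h^0(2K_{\tilde X})$, which I expect to be the hard part.

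First I would pin down the discrepancy formula and the self‑intersections. By Theorem~\ref{thm: normalCase} the map $\epsilon$ contracts one Gorenstein elliptic singularity of degree $1$ together with some $ADE$ (hence canonical, discrepancy $0$) configurations, so only the elliptic point contributes. A degree‑$1$ elliptic singularity is minimally elliptic, and by Laufer--Reid theory \cite{reid97} its fundamental cycle is the reduced curve $E$, which satisfies $\omega_E\isom\ko_E$, $p_a(E)=1$, and $\epsilon^*K_X=K_{\tilde X}+E$. The degree‑$1$ condition is exactly $E^2=-1$; adjunction $E^2+K_{\tilde X}E=2p_a(E)-2=0$ then gives $K_{\tilde X}E=1$, and expanding $1=K_X^2=(\epsilon^*K_X)^2=K_{\tilde X}^2+2K_{\tilde X}E+E^2=K_{\tilde X}^2+1$ yields $K_{\tilde X}^2=0$ (consistent with $\tilde X$ being minimal properly elliptic).

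Next I would compute $\chi(\tilde X)$ and the fibration invariants. As in the proof of Theorem~\ref{thm: normalCase}, one elliptic point gives $\chi(\tilde X)=\chi(X)-1=1$, whence $p_g(\tilde X)=q(\tilde X)$. For the Leray spectral sequence of $\pi$ I use $\pi_*\ko_{\tilde X}=\ko_B$ and, in characteristic zero for a relatively minimal fibration, $R^1\pi_*\ko_{\tilde X}\isom L^{-1}$ locally free of rank one; then $\chi(\tilde X)=\chi(\ko_B)-\chi(L^{-1})=\deg L$, so $\deg L=1$. The five‑term sequence $0\to H^1(B,\ko_B)\to H^1(\tilde X,\ko_{\tilde X})\to H^0(B,L^{-1})\to 0$ together with $\deg L^{-1}=-1<0$ gives $H^0(L^{-1})=0$, hence $q(\tilde X)=g(B)$. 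To separate the two types I bound $p_g$ from above: using $\epsilon_*\ko_{\tilde X}=\ko_X$ and the projection formula, $p_g(\tilde X)=h^0(K_{\tilde X})\le h^0(K_{\tilde X}+E)=h^0(\tilde X,\epsilon^*K_X)=h^0(X,K_X)=p_g(X)=\chi(X)-1+q(X)=1$, the last equality by $q(X)=0$ from Theorem~\ref{thm: structure from FPR}. Thus $p_g(\tilde X)=q(\tilde X)=g(B)\in\{0,1\}$, giving precisely Type~A and Type~B.

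The remaining and most delicate claim is $h^0(2K_{\tilde X})=2$. Riemann--Roch gives $\chi(2K_{\tilde X})=\chi(\ko_{\tilde X})+K_{\tilde X}^2=1$, and Serre duality with $h^0(-K_{\tilde X})=0$ (as $\kappa=1$) yields $h^0(2K_{\tilde X})=1+h^1(2K_{\tilde X})$. For the upper bound I pass through $E$: since $\epsilon^*(2K_X)=2K_{\tilde X}+2E$, the projection formula gives $h^0(2K_{\tilde X}+2E)=h^0(X,2K_X)=3$ (the degree‑$2$ part of the canonical ring in Theorem~\ref{thm: structure from FPR}), and because $(2K_{\tilde X}+2E)|_E=\omega_E^{\tensor2}=\ko_E$ while $|2K_X|$ is base‑point‑free (the bicanonical map is a finite cover, Corollary~\ref{cor: bicanonical section defines a quadruple cover}), the restriction $H^0(2K_{\tilde X}+2E)\to H^0(\ko_E)=\IC$ is surjective; hence $h^0(2K_{\tilde X}+E)=2$ and $h^0(2K_{\tilde X})\le2$. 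The equality is equivalent to $h^1(2K_{\tilde X})=1$, which I would verify through the Leray sequence for $\pi$ and relative duality, the point being the line bundle $R^1\pi_*\ko(2K_{\tilde X})$ on $B$. In Type~B this is clean: pairing the canonical bundle formula $K_{\tilde X}=\pi^*(K_B+L)+\sum(m_i-1)F_i$ with $E$ forces no multiple fibres and $E\cdot F=1$, so $2K_{\tilde X}=\pi^*L^{\tensor2}$ and $h^1(2K_{\tilde X})=h^0(B,L)=1$. The genuine obstacle is the uniform statement in Type~A, where multiple fibres are present; equivalently one must show that the length‑two ideal $\epsilon_*\ko_{\tilde X}(-2E)$ imposes only one condition on $|2K_X|$, i.e.\ that every bicanonical section vanishing at the elliptic point vanishes to order at least two along $E$. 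I expect this rank‑drop to require either the precise fibre configuration forced by the existence of the contractible cycle $E$ or a local analysis at the minimally elliptic singularity, and it is where the care lies.
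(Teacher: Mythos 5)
Most of your proposal tracks the paper's own proof quite closely: the discrepancy and intersection computations, $\chi(\tilde X)=1$, $\deg L=\chi(\tilde X)$ via Friedman, and the bound $p_g(\tilde X)\le p_g(X)=1$ through the injection $H^0(K_{\tilde X})\into H^0(\epsilon^*K_X)$ are exactly the paper's steps. Your upper bound $h^0(2K_{\tilde X})\le 2$ --- restricting the $3$-dimensional space $H^0(2K_{\tilde X}+2E)=\epsilon^*H^0(2K_X)$ to $E$ and using base-point-freeness of $|2K_X|$ --- is a correct reformulation of the paper's remark that the general line of $\IP^2$ misses the image $x_0$ of the elliptic singularity. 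Your Type~B lower bound is also correct: $K_{\tilde X}E=1$ forces $E$ to be a section and excludes multiple fibres, so $2K_{\tilde X}=\pi^*(2L)$ and $h^0(2K_{\tilde X})=h^0(B,2L)=2$.

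The Type~A lower bound, however, which you leave as an expectation, is a genuine gap, and it is where the entire content of the last assertion sits. The numerics you have established do not suffice to force $h^0(2K_{\tilde X})=2$: a relatively minimal elliptic surface over $\IP^1$ with $\chi=1$ carrying a trisection $E$ of arithmetic genus $1$ with $E^2=-1$ and exactly two fibres of multiplicity $3$ also satisfies $K_{\tilde X}E=k\bigl(-1+\sum_i(m_i-1)/m_i\bigr)=3\cdot\frac13=1$ together with all your other identities, yet by \eqref{eq: pluricanonical sections} it has $h^0(2K_{\tilde X})=h^0\bigl(\IP^1,\ko_{\IP^1}(-2+1+1)\bigr)=1$; the lemma is precisely what rules such configurations out. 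For the same reason your fallback route through $h^1(2K_{\tilde X})=1$ via Leray and relative duality is circular: $h^1(2K_{\tilde X})=h^0(2K_{\tilde X})-1$ here, and any such computation needs the number of multiple fibres, which in Section \ref{typeA} is \emph{deduced from} $h^0(2K_{\tilde X})=2$, not conversely. The paper closes the gap with the local claim you formulated but did not prove: since the degree-$1$ elliptic singularity is a double point, the pullback to $\tilde X$ of \emph{every} line through $x_0$ contains $E$ with multiplicity at least $2$, so the pencil of lines through $x_0$ injects into $H^0(2K_{\tilde X})$ and $h^0(2K_{\tilde X})\ge 2$ in both types at once. Your instinct that the care lies in a local analysis at the minimally elliptic singularity (interacting with the quadruple-cover structure of Corollary \ref{cor: bicanonical section defines a quadruple cover}) is exactly right, but the proposal is incomplete without supplying that argument.
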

\begin{proof}
  First of all, the map $\epsilon\colon \tilde{X} \to X$ contracts a unique curve $E$, we have
  $\epsilon^*K_X = K_{\tilde X} + E$, $\chi(\ko_{\tilde{X}}) = \chi(\ko_X) - 1 =
  1$ and $ K_{\tilde{X}}^2 = K_X^2 -1 = 0$. The rest follows from the
  intersection numbers, adjunction,  Theorem \ref{thm: normalCase} and the equality $\deg L = \chi(\tilde X)$ from \cite[Sect. 7, Lemma 14]{Friedman}.
  
  To estimate $p_g(\tilde{X})$ we use that we have an injection
  $H^0(\tilde X, K_{\tilde X})\to H^0(\tilde X, \epsilon^*K_X)$, thus $p_g(\tilde X)\leq p_g(X) = 1$.


  For the last statement note that
  \[H^0(2K_{\tilde X})=H^0(2\epsilon^*K_{ X}-2E) \subset  H^0(2\epsilon^*K_X)=\epsilon^*H^0(2K_X).\] By Corollary \ref{cor: bicanonical section defines a quadruple cover},  $|2K_X|$ defines a quadruple cover of $\IP^2$. Let $x_0$ be the image of the elliptic singularity, that is, the image of $E$ under the composition $\tilde X \to X \to \IP^2$. Then the general line in $\IP^2$ does not contain $x_0$, so $h^0(2K_{\tilde X})\leq 2$. On the other hand, if $l$ is a line through $x_0$ then the pullback of $l$ contains at least twice the exceptional divisor $E$, since the elliptic singularity is a double point. Hence $h^0(2K_{\tilde X})= 2$ as claimed.
\end{proof}

Let us fix some notation: denote by  $F_i$ the reduced multiple fibres of $\pi$ with multiplicities $m_i$.
By the canonical bundle formula \cite[Thm. 15, Sect. 7]{Friedman} we have
\begin{equation}\label{eq: can bundle formula} K_{\tilde X} = \pi^*\left(K_B+L\right) + \tsum_{i=1}^r (m_i-1) F_i
\end{equation}

Let $p_i$ be the image of $F_i$ in $B$. By \cite[Section 7, Exercise 2]{Friedman} we have
\begin{equation}
  \label{eq: pluricanonical sections} H^0\left(mK_{\tilde X}\right)  = \pi^*H^0\left(B, m(K_B + L) + \tsum_{i=1}^r \lfloor \frac{m(m_i-1)}{m_i} \rfloor p_i\right).
\end{equation}

\subsubsection{Type A}\label{typeA}
In this case $g(B) = 0$ so  $B\isom \IP^1$. Plugging $\deg L = 1$ into \eqref{eq: pluricanonical sections} and using Lemma \ref{lem: invariants properly elliptic case AB} we compute
\[
  \begin{split}
    2 = h^0(2K_{\tilde{X}}) &= h^0\left(\IP^1, \ko_{\IP^1}\left(-2+\sum_{i=1}^r\left\lfloor \frac{2(m_i-1)}{m_i} \right\rfloor\right)\right)\\
    &= h^0\left(\IP^1, \ko_{\IP^1}(-2+r)\right)
    \\
    & = 1+r-2.
  \end{split}
\]
Thus there are exactly three multiple fibres.

Note that since $\pi$ is minimal, the curve $E$ cannot be contained in a fibre, so it is a $k$-multisection with $k\geq 2$, because $E$ has arithmetic genus $1$ and $B$ has genus $0$. Then $E.F_i = k/m_i$ and by \eqref{eq: can bundle formula} and Lemma \ref{lem: invariants properly elliptic case AB} we have
\[
  \begin{split}
    1 = K_{\tilde X}E & =  \pi^*(K_B+L).E + \tsum_{i=1}^3 (m_i-1) F_i.E  \\
    &= k\left(-1+\sum_{i=1}^3\frac{m_i-1}{m_i}\right)\\
    & \geq 2\left(-1+\sum_{i=1}^3\frac{m_i-1}{m_i}\right).
  \end{split}
\]
Clearly $(m_i-1)/m_i\geq 1/2$ and thus the only possiblility is $k = m_1 = m_2 = m_3 = 2$.
We have proved
\begin{lem}
  If $\pi\colon \tilde X \to B$ is of Type A then $E$ is a bisection and $\pi$ has exactly three double fibres with reductions $F_1, F_2, F_3$. In particular,  $F_iE = 1$.
\end{lem}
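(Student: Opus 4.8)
The plan is to read off everything from two numerical identities already available, namely $h^0(2K_{\tilde X}) = 2$ and $K_{\tilde X}E = 1$ from Lemma~\ref{lem: invariants properly elliptic case AB}: the first fixes the number of multiple fibres, while the second, combined with a positivity estimate, forces all multiplicities to be $2$ and $E$ to be a bisection.

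First I would count the multiple fibres. Since $g(B)=0$ we have $B\isom\IP^1$, and substituting $m=2$, $\deg L = 1$ into \eqref{eq: pluricanonical sections} I note that $2(m_i-1)/m_i = 2 - 2/m_i \in [1,2)$ for $m_i\geq 2$, so each multiple fibre contributes $\lfloor 2(m_i-1)/m_i\rfloor = 1$ to the twist. Hence $h^0(2K_{\tilde X}) = h^0(\IP^1,\ko_{\IP^1}(-2+r)) = r-1$, and comparing with $h^0(2K_{\tilde X}) = 2$ gives $r=3$.

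Next I would set up the intersection computation for $E$. Because $\pi$ is relatively minimal and $E^2=-1$, the curve $E$ is not vertical, so it is a $k$-multisection with $k = E.F > 0$; a section ($k=1$) would be isomorphic to $B\isom\IP^1$ and hence rational, contradicting $p_a(E)=1$, so $k\geq 2$. Writing $F_i.E = k/m_i$ (since $m_iF_i$ is a full fibre) and $\pi^*(K_B+L).E = \deg(K_B+L)\,k = (-2+1)k = -k$, the canonical bundle formula \eqref{eq: can bundle formula} together with $K_{\tilde X}E = 1$ yields
\[
  1 = K_{\tilde X}E = -k + \tsum_{i=1}^3 (m_i-1)\frac{k}{m_i}
  = k\left(-1 + \tsum_{i=1}^3\frac{m_i-1}{m_i}\right).
\]

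Finally I would exploit the rigidity of this single equation. Since $k\geq 2$ and $(m_i-1)/m_i\geq 1/2$, the right-hand side is at least $2(-1+3/2)=1$; equality with $1$ therefore forces $k=2$ and $(m_i-1)/m_i = 1/2$ for each $i$, i.e.\ $m_1=m_2=m_3=2$. Then $F_i.E = k/m_i = 1$, which is the claimed last identity, and $E$ is a bisection meeting three double fibres as asserted. The only subtle point is this equality case: it is precisely the tightness of the bound that simultaneously pins down the bisection degree and all three multiplicities, whereas the remaining ingredients are direct substitutions into the two formulas established above.
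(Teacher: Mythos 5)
Your proof is correct and follows essentially the same route as the paper: first pinning down $r=3$ by plugging $m=2$ and $\deg L = 1$ into \eqref{eq: pluricanonical sections} and comparing with $h^0(2K_{\tilde X})=2$, then combining the canonical bundle formula \eqref{eq: can bundle formula} with $K_{\tilde X}E=1$, $k\geq 2$, and $(m_i-1)/m_i\geq 1/2$ to force $k=m_1=m_2=m_3=2$. Your treatment of the equality case is in fact slightly more explicit than the paper's, but the argument is the same.
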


\begin{lem}
  There exist points $q_i\in F_i$ such that $q_i\not\in E$ and
  $(K_{\tilde X/B} +E)|_{F_i}$ is linearly equivalent to $q_i$.
\end{lem}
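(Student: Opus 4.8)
The plan is to compute the restricted bundle $(K_{\tilde X/B}+E)|_{F_i}$ explicitly and recognise it as $\ko_{F_i}(q_i)$ for a point $q_i$. I would start from the canonical bundle formula \eqref{eq: can bundle formula}, which in Type A (all multiplicities equal to $2$) reads $K_{\tilde X}=\pi^*(K_B+L)+\sum_j F_j$, so that $K_{\tilde X/B}=K_{\tilde X}-\pi^*K_B=\pi^*L+\sum_j F_j$. Restricting to $F_i$ and using that $\pi^*L|_{F_i}$ is trivial (it is a pullback along the constant map $F_i\to\{p_i\}$) and that the fibres $F_j$, $j\neq i$, are disjoint from $F_i$, I obtain $K_{\tilde X/B}|_{F_i}\isom \ko_{F_i}(F_i)$, the normal bundle of $F_i$. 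Since $E.F_i=1$, the intersection $E\cap F_i$ is a single reduced point $r_i$, so $(K_{\tilde X/B}+E)|_{F_i}\isom \ko_{F_i}(F_i)\tensor\ko_{F_i}(r_i)$ is a line bundle of degree $1$. Here $F_i$ is a Gorenstein curve of arithmetic genus $1$ with $\omega_{F_i}\isom\ko_{F_i}$, as follows from adjunction together with $K_{\tilde X}.F_i=F_i^2=0$ (the latter because each term of the canonical bundle formula meets the contracted curve $F_i$ in degree $0$).

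Next I would produce $q_i$. On the genus-$1$ curve $F_i$, any line bundle $M$ of degree $1$ has $\chi(M)=1$ by Riemann--Roch, while $h^1(M)=h^0(M^\vee)=0$ by Serre duality and $\omega_{F_i}\isom\ko_{F_i}$; hence $h^0(M)=1$, and the zero locus of the unique section is an effective Cartier divisor of degree $1$, i.e.\ a single smooth point $q_i$ with $M\isom\ko_{F_i}(q_i)$. Applying this to $M=(K_{\tilde X/B}+E)|_{F_i}$ yields the desired $q_i$. The only point needing care is when $F_i$ is reducible (a cycle of rational curves): there one should first check that $M^\vee$, whose multidegree is concentrated with a single $-1$, has no global sections (a short propagation argument around the cycle), and that the section of $M$ does not vanish identically on any component, so that $q_i$ is genuinely a point of the smooth locus.

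Finally, and this is the real content, I would prove $q_i\notin E$. Since $E\cap F_i=\{r_i\}$, it suffices to show $q_i\neq r_i$. If instead $q_i=r_i$, then $\ko_{F_i}(r_i)\isom\ko_{F_i}(F_i)\tensor\ko_{F_i}(r_i)$, forcing the normal bundle $\ko_{F_i}(F_i)$ to be trivial. But $2F_i$ is a multiple fibre of multiplicity exactly $2$, and the normal bundle of the reduced curve underlying a multiple fibre of multiplicity $m$ is a torsion element of order exactly $m$ in $\mathrm{Pic}(F_i)$ \cite[Sect.~7]{Friedman}; for $m=2$ this element is nonzero. Hence $\ko_{F_i}(F_i)\not\isom\ko_{F_i}$, a contradiction, so $q_i\notin E$. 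I expect the main obstacle to be exactly this last input: the nontriviality (order exactly $2$) of the normal bundle of the double fibre is precisely what separates $q_i$ from the intersection point $r_i=E\cap F_i$, so the whole statement hinges on that standard but essential property of multiple fibres.
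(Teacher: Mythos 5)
Your proof is correct and takes essentially the same route as the paper's: you use the same canonical bundle formula computation to identify $(K_{\tilde X/B}+E)|_{F_i}$ with $(E+F_i)|_{F_i}$, produce $q_i$ as the unique effective divisor of this degree-$1$ bundle on the genus-$1$ curve $F_i$, and invoke the non-triviality of the $2$-torsion normal bundle $\ko_{F_i}(F_i)$ (Friedman, Sect.~7) exactly as the paper does to conclude $q_i\notin E$. Your extra care with Riemann--Roch, Serre duality, and the reducible (cycle of rational curves) case merely fills in details the paper leaves implicit.
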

\begin{proof}
  Note that $\ko_{F_i}(F_i)$ is a non-trivial $2$-torsion bundle on $F_i$ by \cite[Thm. 15, Sect. 7]{Friedman}. We have
  \[K_{\tilde X/B} = K_{\tilde{X}} - \pi^*K_B = \pi^*L + \tsum_j F_j,\] thus
  \[\left(K_{\tilde X/B} +E\right)|_{F_i} = \left(\pi^*L +\tsum_j F_j + E\right)|_{F_i} = \left(E+F_i\right)|_{F_i}\]
  which has degree $1$ and thus is linearly equivalent to a unique effective divisor $q_i$. We have $q_i\not \in E$ because $F_i|_{F_i}$ is non-trivial.
\end{proof}

\begin{figure}
  \centering
  \resizebox{1.0\textwidth}{!}{
    \begin{tikzpicture}
      [arrow/.style = {->, every node/.style = { font = \scriptsize}}]
      \begin{scope}[xshift = 0cm, yshift = 4cm]
        \foreach \s in {0,...,2}{
          \draw[C2col]
          ($(-1,-1) + .75*(\s,0)$) -- ($(-1,1) + .75*(\s,0)$);
        };
        \foreach \s in {0,...,2}{
          \coordinate (P\s) at ($(-1,0.125) + .75*(\s,0.25)$);
        };
        \draw[looseness=1]
        (P0) to[out=180-70, in=5] +(180-30:0.75) node[](E){}
        (P0) to[out=-70,in=180+50] (P1)
        (P1) to[out=50,in=180-60] (P2)
        (P2) to[out=180+30, in=-20] (P1)
        (P1) to[out=180-20, in=40] (P0)
        (P0) to[out=180+40, in=-5] +(180+20:0.75);
        \draw[looseness=4]{
          (P2) -- +(-60:0.5) to[out=-60, in=+30] +(+30:0.2) -- (P2)
        };
        \foreach \s in {0,...,2}{
          \coordinate (q\s) at ($(P\s) - (0,0.875)$);
        };
        \foreach \s[count=\si] in {0,...,2}{
          \draw[C12col] (q\s) -- +(50:0.375);
          \draw[C12col] (q\s) to[out=180+50,in=100] +(180+70:0.875)
          node[xshift=-0.25cm]{$G_\si$};
        };
        \foreach \s[count=\si] in {0,...,2}{
          \node[C2col] at ($(P\s) + (0,1)$) {$\hat{F}_\si$};
        };
        \node[] at ($(E) +(-0.75,-0.5)$) {$\hat{E}=E$};
        \draw[rounded corners] (-3.0, 1.5) rectangle (1.5, -1.75);
        \node[] at (1.7,1.6) {$\hat{X}$};
        \draw[arrow] (-3.25,0) to
        node[below,rotate=40] {}
        node[above,xshift=-1.25cm] {blow up $q_1, q_2, q_3$}
        +(180+40:3.0);
        \draw[arrow] (1.75,0) to
        node[below] {}
        node[above,xshift=1.25cm] {contract $\hat{F}_i$ and ADE}
        +(-40:3);
        \draw[arrow] (-0.75,-2) to
        node[below,xshift=3.5em,yshift=1.2em] {$\pi^*\pi_{*}M \to M$}
        node[above,xshift=-1em] {$\hat{\vartheta}$}
        +(0,-2.75);
      \end{scope}

      \begin{scope}[xshift = -5.5cm, yshift = 0.25cm]
        \foreach \s in {0,...,2}{
          \draw[C2col]
          ($(-1,-1) + .75*(\s,0)$) -- ($(-1,1) + .75*(\s,0)$);
        };
        \foreach \s in {0,...,2}{
          \coordinate (P\s) at ($(-1,0.125) + .75*(\s,0.25)$);
        };
        \draw[looseness=1]
        (P0) to[out=180-70, in=5] +(180-30:0.75) node[](E){}
        (P0) to[out=-70,in=180+50] (P1)
        (P1) to[out=50,in=180-60] (P2)
        (P2) to[out=180+30, in=-20] (P1)
        (P1) to[out=180-20, in=40] (P0)
        (P0) to[out=180+40, in=-5] +(180+20:0.75);
        \draw[looseness=4]{
          (P2) -- +(-60:0.5) to[out=-60, in=+30] +(+30:0.2) -- (P2)
        };
        \foreach \s in {0,...,2}{
          \coordinate (q\s) at ($(P\s) - (0,0.875)$);
        };
        \foreach \s[count=\si] in {0,...,2}{
          \fill[C12col] (q\s) circle (2pt);
          \draw[C12col] (q\s) node[xshift=0.3cm]{$q_\si$};
        };
        \foreach \s[count=\si] in {0,...,2}{
          \node[C2col] at ($(P\s) + (0,1)$) {$F_\si$};
        };
        \node[] at ($(E) +(0,0.25)$) {$E$};
        \draw[rounded corners] (-2.0, 1.5) rectangle (1.5, -1.25);
        \draw[] (1.7,1.6) node[]{$\tilde{X}$};
        \draw[arrow,dashed] (0.0,-1.5) to
        node[below] { }
        node[above] { $\vartheta$}
        +(-20:3.);
        \draw[arrow] (-0.5,-1.5) to
        node[below] { }
        node[above,xshift=0.5em] { $\pi$}
        +(-40:6.0);
      \end{scope}

      \begin{scope}[xshift = 4.5cm, yshift = 0.25cm]
        \foreach \s in {0,...,2}{
          \draw[C12col]
          ($(-1,-1) + .75*(\s,0)$) -- ($(-1,1) + .75*(\s,0)$);
        };
        \foreach \s in {0,...,2}{
          \coordinate (P\s) at ($(-1,0.125) + .75*(\s,0.25)$);
        };
        \draw[looseness=1]
        (P0) to[out=180-70, in=5] +(180-30:0.75) node[](E){}
        (P0) to[out=-70,in=180+50] (P1)
        (P1) to[out=50,in=180-60] (P2)
        (P2) to[out=180+30, in=-20] (P1)
        (P1) to[out=180-20, in=40] (P0)
        (P0) to[out=180+40, in=-5] +(180+20:0.75);
        \draw[looseness=4]{
          (P2) -- +(-60:0.5) to[out=-60, in=+30] +(+30:0.2) -- (P2)
        };
        \node[C2col,font=\scriptsize](label) at ($(P1)+(0,0.875)$)
        {elliptic singularities};
        \draw[arrow,C2col] (label) -- ($(P0) +(0.1,0.125)$);
        \foreach \s[count=\si] in {0,...,2}{
          \fill[C2col] (P\s) circle (2pt);
        };
        \node[] at ($(E) +(0,-1.0) $) {$\bar{E}$};
        \draw[rounded corners] (-2.0, 1.5) rectangle (1.5, -1.25);
        \draw[] (1.7,1.6) node[]{$\bar{X}$};
        \draw[arrow,dashed] (-0.5,-1.5) to
        node[above, xshift=-0.5cm, font=\scriptsize] {double cover}
        node[below] { $\bar{\vartheta}$}
        +(200:3.);
        \draw[arrow] (0,-1.5) to
        node[below] { }
        node[above,xshift=-0.5em] { $\bar{\pi}$}
        +(180+40:6.0);
      \end{scope}

      \begin{scope}[xshift = -0.5cm, yshift = -2.5cm]
        \foreach \s in {0,...,2}{
          \draw[C12col]
          ($(-1,-1) + .75*(\s,0)$) -- ($(-1,1) + .75*(\s,0)$);
          \draw[C2col,dashed]
          ($(-1,-1) + .75*(\s,0)$) -- ($(-1,1) + .75*(\s,0)$);
        };
        \foreach \s in {0,...,2}{
          \coordinate (P\s) at ($(-1,0.125) + .75*(\s,0.25)$);
        };
        \foreach \s[count=\si] in {0,...,2}{
          \draw[C2col,looseness=1]
          {(P\s) to[out=100, in=-10] +(180-40:0.375)}
          {(P\s) to[out=-100, in=10] +(180+40:0.375)}
          {(P\s) to[out=80, in=190] +(40:0.375)}
          {(P\s) to[out=-80, in=170] +(-40:0.375)};
        };
        \foreach \s[count=\si] in {0,...,2}{
          \fill[C2col] (P\s) circle (2.pt);
        };
        \draw[] (-1.5,-0.5) coordinate (T0) -- (1.0,-0.5);
        \node[font=\scriptsize] at ([yshift=0.25cm]T0) {$-1$};
        \node[font=\scriptsize] at ([yshift=-0.25cm]T0){$C_0$};
        \draw[rounded corners] (-2.0, 1.5) rectangle (1.5, -1.25);
        \node[] at (1.8,1.7) {$\IP(\pi_{*}M) \cong \IF_1 $};
        \draw[arrow] (-0.25,-1.5) to
        node[below] { }
        node[above,rotate=-40]{}
        +(-90:1) coordinate (IP1);
        \node at ([yshift=-0.375cm]IP1) {$\IP^1$};
      \end{scope}
    \end{tikzpicture}
  } 
  \caption{Properly elliptic case, type A} \label{fig: properly elliptic type A}
\end{figure}
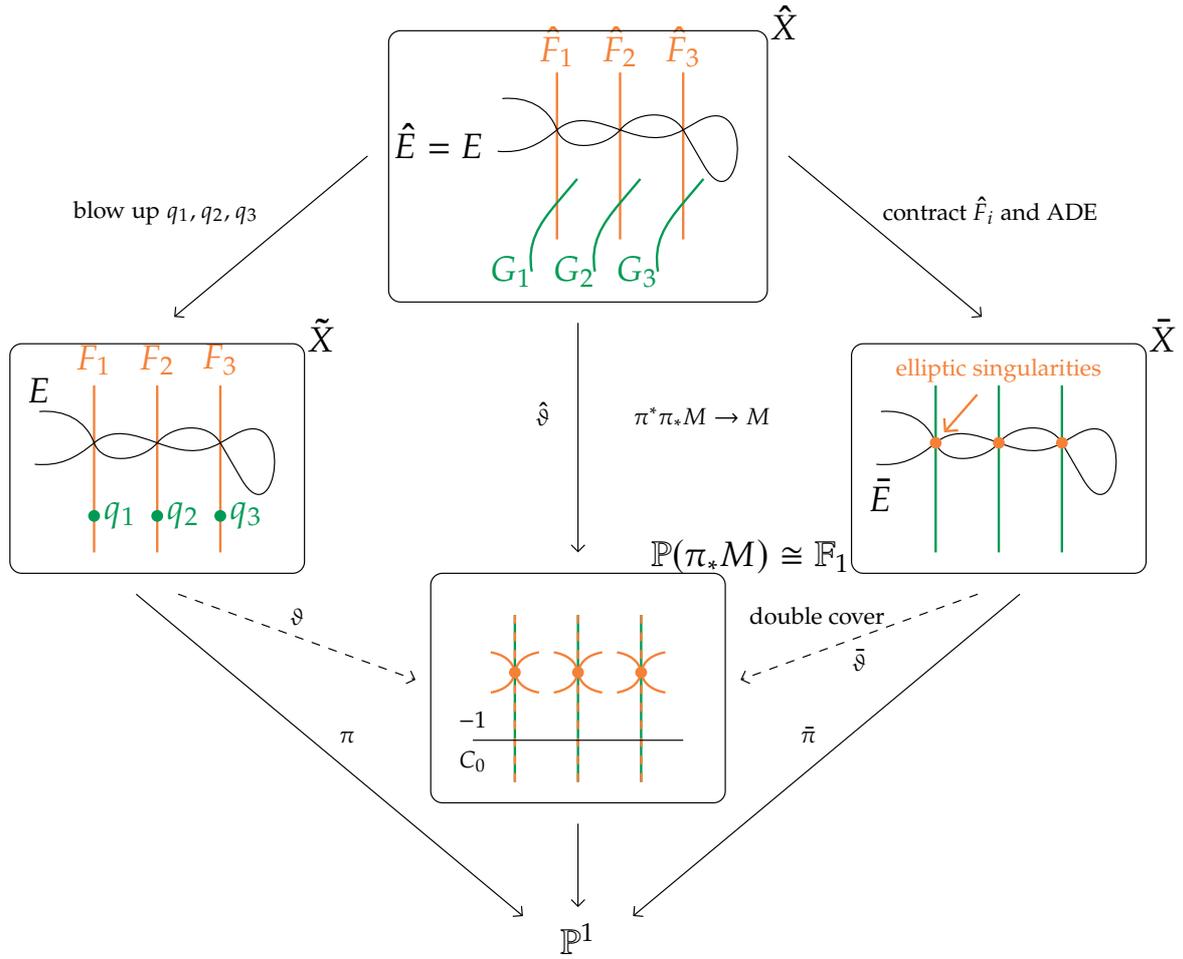

Now we consider $\sigma \colon \hat X = \mathrm{Bl}_{q_1, q_2, q_3}(\tilde X)\to \tilde X $ and denote the exceptional curve over $q_i$ with $G_i$. Let $\hat \pi = \pi \circ \sigma\colon \hat X \to B$ be the induced fibration.

Let $\hat E$ respectively $\hat F_i$ be the strict transforms of $E$ and the $F_i$ in $\hat X$.
Let $\bar\sigma\colon \hat X \to \bar X$ be the contraction of the curves $\hat F_i$ and possibly of ADE-configurations in the singular fibres of $\hat \pi$, which do not intersect the bi-section $\hat E$.

\begin{lem}\label{lem: lemma for type A}
  Consider on $\hat X$ the line bundle
  \begin{equation}\label{eq: M} M  = K_{\hat X/B} + \hat E -2\sum_i G_i = \hat \pi ^* L +\hat E +\sum_i \hat F_i.
  \end{equation}
  Then the following properties hold:
  \begin{enumerate}
  \item $M|_{\hat F_i} \isom \ko_{\hat F_i}$ and $\ko_{\hat F_i}(\hat F_i) \isom \ko_{\hat F_i}(- \hat E)$;
  \item $M|_{n\hat F_i} \isom \ko_{n\hat F_i}$ for every $n\in \IN$ (and every $i=1,2,3$); \footnote{We are grateful to Andreas Krug for help with this item.}
  \item The sheaf $\bar M \isom \bar\sigma_*M$ is a line bundle and $M \isom \bar \sigma^*\bar M $;
  \item For every (scheme-theoretic) fibre $\bar X_b$ of $\bar\pi \colon \bar X \to \IP^1$ the line bundle $\bar M|_{\bar X_b}$ has two sections which define a base-point free pencil;
  \item
    Then the natural map $\hat\pi^*\hat\pi_*M \to M$ is surjective and induces a morphism $\hat\theta\colon \hat X \to \IP(\hat\pi_*M)$, which factors over $\bar X$ such that $\bar\theta\colon \bar X \to \IP(\hat\pi_*M)$ is a double cover.
  \end{enumerate}
\end{lem}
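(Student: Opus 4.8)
The plan is to transport the whole statement to $\bar X$, where items (1)--(4) are available, prove it there, and only pull back along $\bar\sigma$ at the very end. First I would identify the push-forward. Item (3) gives $M\isom\bar\sigma^*\bar M$ and $\bar M\isom\bar\sigma_*M$, whence $\hat\pi_*M\isom\bar\pi_*\bar M=:\mathcal{E}$ on $B\isom\IP^1$. Every scheme-theoretic fibre $\bar X_b$ is a Cartier divisor in the Gorenstein surface $\bar X$, hence a Gorenstein curve of arithmetic genus $1$, and $\bar M|_{\bar X_b}$ has degree $2$; therefore $\chi(\bar M|_{\bar X_b})=2$ is constant and $h^1(\bar M|_{\bar X_b})=0$, since $\omega_{\bar X_b}\tensor\bar M^{-1}|_{\bar X_b}$ has negative degree. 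Cohomology and base change then show that $\mathcal{E}$ is locally free of rank $2$, that $R^1\bar\pi_*\bar M=0$, and that the base-change maps $\mathcal{E}\tensor k(b)\isom H^0(\bar M|_{\bar X_b})$ are isomorphisms for all $b$.

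Next I would produce the two morphisms. By the base-change isomorphism the evaluation $\bar\pi^*\mathcal{E}\to\bar M$ restricts on each fibre to $H^0(\bar M|_{\bar X_b})\tensor\ko_{\bar X_b}\to\bar M|_{\bar X_b}$, which is surjective precisely because item (4) makes this pencil base-point free; hence $\bar\pi^*\mathcal{E}\to\bar M$ is surjective and defines a $B$-morphism $\bar\theta\colon\bar X\to\IP(\mathcal{E})$ with $\bar\theta^*\ko(1)\isom\bar M$. Applying the right-exact functor $\bar\sigma^*$ and using $\hat\pi=\bar\pi\circ\bar\sigma$ together with $M=\bar\sigma^*\bar M$ turns this into the surjection $\hat\pi^*\hat\pi_*M\to M$; functoriality of the $\IP(\mathcal{E})$-construction then identifies the induced $\hat\theta$ with $\bar\theta\circ\bar\sigma$. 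This proves simultaneously the surjectivity asserted in (5) and the claimed factorisation over $\bar X$.

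Finally I would check that $\bar\theta$ is a double cover. It is a morphism over $B$ to the $\IP^1$-bundle $\IP(\mathcal{E})$, so a curve can be contracted only if it is vertical and meets $\bar M$ in degree $0$. The general fibre is irreducible of $\bar M$-degree $2$, and over each of the three points $p_i$ the fibre is the non-reduced $2\bar G_i$ with $\bar M\cdot\bar G_i=1$ (by the projection formula, since $M\cdot G_i=1$ on $\hat X$). As $\bar M$ is thus positive on every vertical component, no curve is contracted, so $\bar\theta$ is quasi-finite and, being proper, finite. Its degree, read off the generic fibre where $\bar M$ cuts out a $g^1_2$ on an elliptic curve, equals $2$; and since $\IP(\mathcal{E})$ is smooth while $\bar X$ is normal, hence Cohen--Macaulay, miracle flatness makes $\bar\theta$ flat, i.e.\ a genuine double cover.

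The step I expect to be the real obstacle is this last finiteness/positivity check on the three multiple fibres. It is exactly here that the delicate preparation in (1)--(3) pays off: replacing $M$ by the honest line bundle $\bar M$ on $\bar X$ is what turns the degenerate restriction on the double fibres into $\bar M\cdot\bar G_i=1$, and one genuinely needs that item (4) holds on \emph{every} fibre and that $\bar M$ is positive on \emph{all} vertical components, not merely base-point free. Once that positivity is secured, the degree count and the flatness statement are formal.
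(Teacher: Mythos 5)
Your proposal proves only item (5): the opening move ``transport the whole statement to $\bar X$, where items (1)--(4) are available'' assumes exactly the parts of the lemma that carry the technical weight, and no argument for them ever appears. The statement to be proved is all five items, and (1)--(3) are not routine. Item (3) --- that $\bar M=\bar\sigma_*M$ is a line bundle with $M\isom\bar\sigma^*\bar M$ --- is what licenses your entire strategy of working on $\bar X$, and it cannot be taken for granted: $\bar\sigma$ contracts the elliptic curves $\hat F_i$, whose images are non-rational (elliptic Gorenstein) singularities, so pushforwards of line bundles along $\bar\sigma$ are in general not even invertible. The paper obtains (3) from the theorem on formal functions, which requires knowing $M|_{n\hat F_i}\isom\ko_{n\hat F_i}$ on every infinitesimal neighbourhood (item (2)), plus the fact that contracting an elliptic $(-1)$-curve yields a hypersurface singularity, so that $\bar\sigma_*\ko_{\hat X}\isom\ko_{\bar X}$. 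Item (2) is in turn the hardest step: triviality on the reduced curve $\hat F_i$ (item (1), which itself rests on the specific choice of the points $q_i$ made in the preceding lemma and the blow-up formula for $K_{\hat X}$) does not formally propagate to $n\hat F_i$; the paper runs an induction whose key input is the computation $\Ext^1_{\ko_{nF}}\bigl(\ko_{(n-1)F},\ko_F(-(n-1)F)\bigr)\isom\IC$ via the derived pullback formula of Arinkin--C\u{a}ld\u{a}raru, to conclude that the restriction sequence for $M|_{nF}$ must coincide with the one for $\ko_{nF}$. None of this can be waved through, so your closing diagnosis that the ``real obstacle'' is the finiteness check in (5) misplaces where the difficulty of the lemma actually sits.

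For what it is worth, the part you do prove is correct and is more careful than the paper's one-line treatment of (5): the paper only notes that the evaluation map is surjective because it is fibrewise base-point free and that the map is fibrewise a double cover, whereas you verify that $\bar M$ is positive on every vertical component (so $\bar\theta$ contracts nothing and is finite), count the degree on the generic fibre, and get flatness from miracle flatness. One caveat even there: deducing $h^1(\bar M|_{\bar X_b})=0$ because ``$\omega_{\bar X_b}\tensor\bar M^{-1}|_{\bar X_b}$ has negative degree'' is not valid on a reducible or non-reduced Gorenstein curve from the total degree alone; you need positivity of $\bar M$ on each irreducible component (which you only establish afterwards, so the argument should be reordered), or you can simply combine item (4) with the constancy of $\chi$ along the fibres, which is how the paper's appeal to base change works.
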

In total the following diagram arises, compare also Figure \ref{fig: properly elliptic type A}:
\[
  \begin{tikzcd}
    {}& \hat X \arrow{dl}[swap]{\text{Blow up $q_1, q_2, q_3$}} \arrow{dr}{\text{contract $\hat F_i$ and ADE}}\arrow{dd}{\hat\theta}\\
    \tilde X\arrow[dashed]{dr}{\theta}\arrow{ddr}{\pi}& & \bar X \arrow{dl}{\text{double cover}}[swap]{\bar\theta}\arrow{ddl}{\bar\pi}\\
    & \IP(\pi_*M)\dar\\
    & \IP^1
  \end{tikzcd}
\]

\begin{proof}
  Denote by $\bar G_i = \bar\sigma(G_i)$ the image of $G_i$ in $\bar X$ and also $\bar E = \bar \sigma(\hat E)$.  Note that $\hat \pi\colon \hat X \to \IP^1$ factors over a map $\bar \pi \colon  \bar X\to \IP^1$.

  \begin{enumerate}
  \item For this item we use the first description of $M$,  $M=K_{\hat{X}/B} + \hat{E} - \sum 2G_i$.
    By properties of blow-ups we have
    \[
      K_{\hat{X}} = \sigma^{*}K_{\tilde{X}} + \tsum G_i.
    \]
    Thus
    \[
      (K_{\hat{X}/B} + \hat{E} -\sum 2G_i) = \sigma^{*}\left(K_{\tilde{X}/B} + E\right) - \tsum G_i.
    \]
    The strict transform of $F_i$ is $\hat{F}_i \isom F_i$. Then
    $(K_{X/B} +E)|_{F_i} = q_i$,
    \begin{align*}
      M|_{\hat{F}_i}& =\left(\sigma^{*}(K_{\tilde{X}/B}+E)|_{F_i} -\tsum G_i\right)|_{\hat{F}_i}\\
      &= q_i - q_i = 0.
    \end{align*}
    Thus $M|_{\hat F_i} = \ko_{\hat F_i}$ and $\ko_{\hat F_i}(\hat F_i) \isom \ko_{\hat F_i}(- \hat E)$.
  \item We prove the assertion by induction on $n$. The $n=1$ case is already done. We set $F:=\hat F_i$ and consider the short exact sequence
    \begin{equation}\label{ses1}
      0\to \ko_F\left(-(n-1)F\right)\to \ko_{nF}\to \ko_{(n-1)F}\to 0\,.
    \end{equation}
    Tensoring with $M$ gives
    \begin{equation}\label{ses2}
      0\to \ko_F\bigl(-(n-1)F\bigr)\to M|_{nF}\to \ko_{(n-1)F}\to 0\,.
    \end{equation}
    where the triviality of the outer terms is due to the induction hypothesis.
    In the following, we will show that
    $\Ext^1_{\ko_{nF}}\Bigr(\ko_{(n-1)F},\ko_F\bigl(-(n-1)F\bigr)\Bigl)=\IC$, i.e.,
    there are two non-trivial extensions.
    The assertion then follows by comparing the two exact sequences \eqref{ses1} and
    \eqref{ses2}, as \eqref{ses2} cannot split since $M_{|nF}$ is a line
    bundle).

    Let $\alpha\colon F\hookrightarrow (n-1)F$ and $\iota\colon
    (n-1)F\hookrightarrow nF$ be the closed embeddings. We now use the language of derived functors for a convenient computation of 
    $\Ext^1_{\ko_{nF}}\Bigl(\ko_{(n-1)F},\ko_F\bigl(-(n-1)F\bigr)\Bigr)$.
    We have
\[(L\iota^*)\iota_*\ko_{(n-1)F}\cong \ko_{(n-1)F}[0]\oplus
    \ko_{(n-1)F}(-(n-1)F)[1]\] by  \cite[Thm.\ 0.7]{Arinkin2012} and note that
    $\ko_{(n-1)F}(-(n-1)F)$ is the conormal bundle of the embedding $\iota$.
    This gives
    \begin{align*}
      &\quad\Ext^1_{\ko_{nF}}\Bigr(\iota_*\ko_{(n-1)F},\iota_*\alpha_*\ko_F\bigl(-(n-1)F\bigr)\Bigl)\\
      &\cong
        \Ext^1_{\ko_{(n-1)F}}\Bigr((L\iota^*)\iota_*\ko_{(n-1)F},\alpha_*\ko_F\bigl(-(n-1)F\bigr)\Bigl)\\
      &\cong  \Ext^1_{\ko_{(n-1)F}}\Bigr(\ko_{(n-1)F},\alpha_*\ko_F\bigl(-(n-1)F\bigr)\Bigl)\\
      &\qquad \oplus \Ext^0_{\ko_{(n-1)F}}\Bigr(\ko_{(n-1)F}(-(n-1)F),\alpha_*\ko_F\bigl(-(n-1)F\bigr)\Bigl)\\
      &\cong
        H^1\bigl(F,\ko_F(-(n-1)F)\bigr)\oplus H^0\bigl(F,\ko_F\bigr)\\
      & \cong \IC\,.
    \end{align*}
  \item
    Let $p_i\in \bar X$ be the image of $\hat F_i$ in $\bar X$. By \cite[Exercise
    7.5]{Eisenbud2013} it is enough to prove that the completion of
    $\bar\sigma^*M$ at $p_i$ is free.

    By  the theorem of formal functions \cite[Thm. III.11.1]{Hartshorne} combined with the fact that the chosen  subscheme structure on the fibre does not affect the limit \cite[Rem. II.9.3.1]{Hartshorne} we have
    \begin{equation}\label{eq: formal functions}
      \widehat{ \bar \sigma_* M}^{p_i}
      \isom \varprojlim_n H^0(nF_i, M|_{nF_i})\isom \varprojlim_n H^0(nF_i, \ko_{nF_i})\isom \widehat{ \bar \sigma_* \ko_{\hat{X}}}^{p_i},
    \end{equation}
    where the middle isomorphism comes from the previous item.

    Since the contraction of an elliptic curve with self-intersection $-1$ leads to
    a hypersurface singularity \cite[Ch. 4]{reid97}  we
    have $\bar\sigma_*\ko_{\hat X} \isom \ko_{\bar X}$. So indeed the right hand
    side of \eqref{eq: formal functions} is free.

  \item This is easily computed on the fibres of the form $2\bar G_i$ and clear on the general fibre.
  \item By base change and the previous step  $\hat\pi_*M  = \bar\pi_*\hat M $ is a vector bundle of rank $2$ and $\bar\pi^*\bar\pi_* \bar M \to \bar M$ is surjective because it is fibrewise base-point free.
    Fibrewise $\bar X \to \IP(\hat\pi_*M)$ is a double cover, so it is a double cover.\qedhere
  \end{enumerate}
\end{proof}

\begin{lem}
  We have $\hat\pi_*M \isom \ko_B(1)\oplus \ko_B(2)$ and thus $P = \IP(\hat\pi_*M)\isom \IF_1$. Let $C_0$ be the unique $(-1)$-curve in $P$ and $F$ a general fibre of the projection to $B = \IP^1$. Then the tautological bundle of $P$ is $\ko_P(1) = \ko_P(C_0 + 2F)$ and $\theta_*\hat E$ is  an irreducible curve in $|C_0+F|$.
\end{lem}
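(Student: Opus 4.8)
The plan is to set $V:=\hat\pi_*M$, which is locally free of rank $2$ on $B\isom\IP^1$ by part~(5) of Lemma~\ref{lem: lemma for type A}, so Grothendieck's splitting theorem gives $V\isom\ko_B(a)\oplus\ko_B(b)$ with $a\ge b$. Since $\IP(\ko_B(a)\oplus\ko_B(b))\isom\IF_{a-b}$ and the claimed bundle is $\ko_B(1)\oplus\ko_B(2)$, everything reduces to computing the two integers $a+b$ and $a-b$, i.e.\ to showing $(a,b)=(2,1)$.

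For the degree I would use Riemann--Roch on $\hat X$ together with the intersection numbers recorded above (notably $\hat E^2=-1$, $\hat F_i^2=-1$, $\hat E\cdot\hat F_i=1$, $\hat\pi^*L\cdot\hat E=2$, $\hat F_i\cdot G_i=1$), which give $M^2=6$, $K_{\hat X}\cdot M=4$, hence $\chi(\hat X,M)=2$. On $\bar X$ the bundle $\bar M$ is fibrewise of degree $2$ with vanishing $H^1$ by part~(4), so cohomology and base change give $R^1\bar\pi_*\bar M=0$ and $\chi(\bar X,\bar M)=\deg V+2$. Comparing the two sides via $M=\bar\sigma^*\bar M$, and using that $\bar\sigma$ contracts the three elliptic curves $\hat F_i$ to degree-$1$ (minimally) elliptic singularities so that $R^1\bar\sigma_*\ko_{\hat X}\isom\IC^3$, one gets $\chi(\bar X,\bar M)=\chi(\hat X,M)+3=5$ and therefore $\deg V=3$.

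The decisive step, and the one I expect to be the main obstacle, is to pin down $a-b$, i.e.\ to exclude $\IF_3$ (and the $b<0$ cases). Since $a+b=3$ forces $a\ge 2$, every partition other than $(2,1)$ satisfies $h^0(V(-2))\ge 2$, so it suffices to prove the bound $h^0(\hat X,M-2F)=h^0(B,V(-2))\le 1$. I would restrict to $\hat E$ using
\[0\to\ko_{\hat X}(M-2F-\hat E)\to\ko_{\hat X}(M-2F)\to\ko_{\hat X}(M-2F)|_{\hat E}\to0,\]
where $(M-2F)\cdot\hat E=0$ and $p_a(\hat E)=1$ give $h^0\bigl((M-2F)|_{\hat E}\bigr)\le 1$, so the claim reduces to $H^0(\hat X,M-2F-\hat E)=0$. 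Here $M-2F-\hat E=\hat\pi^*\ko_B(-1)+\sum_i\hat F_i$, and the crux is the identity $\hat\pi_*\ko_{\hat X}\bigl(\sum_i\hat F_i\bigr)\isom\ko_B$: a local section has no poles on a general fibre, hence is pulled back from $B$, and since the full fibre over $p_i$ is $2\hat F_i+2G_i$ the half-fibre $\hat F_i$ cannot absorb a genuine pole. This forces $h^0(M-2F-\hat E)=h^0(B,\ko_B(-1))=0$, hence $(a,b)=(2,1)$ and $P\isom\IF_1$.

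It then remains to read off the two divisor classes. The tautological bundle has $\ko_P(1)\cdot F=1$, so $\ko_P(1)=C_0+mF$ for some $m$, and $\ko_P(1)^2=\deg V=3=(C_0+mF)^2=-1+2m$ gives $m=2$, i.e.\ $\ko_P(1)=\ko_P(C_0+2F)$. For the image of $\hat E$, on a general fibre $F_0$ of $\hat\pi$ one has $M|_{F_0}\isom\ko_{F_0}(\hat E|_{F_0})$, so the two points of $\hat E\cap F_0$ are exchanged by the involution of the pencil $|M|_{F_0}|$ defining the double cover $\bar\theta$; hence $\hat E$ maps $2:1$ onto its image $D:=\hat\theta(\hat E)$, an irreducible section with $D\cdot F=1$, and $\hat\theta_*[\hat E]=2[D]$. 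Writing $D=C_0+kF$, the projection formula $M\cdot\hat E=\hat\theta^*\ko_P(1)\cdot\hat E=\ko_P(1)\cdot\hat\theta_*[\hat E]=2\,(C_0+2F)(C_0+kF)$ together with $M\cdot\hat E=4$ gives $(C_0+2F)(C_0+kF)=2$, whence $k=1$ and $\theta_*\hat E\in|C_0+F|$.
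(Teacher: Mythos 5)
Your proof is correct, and it takes a genuinely different route from the paper's. The paper proceeds structurally: it pushes forward the exact sequence $0\to\hat\pi^*L+\sum_i\hat F_i\to M\to M|_{\hat E}=K_{\hat E/B}\to 0$, identifies $\hat\pi_*\bigl(\hat\pi^*L+\sum_i\hat F_i\bigr)\isom\ko_B(1)$ via the identity $\hat\pi_*\ko_{\hat X}\bigl(\sum_i\hat F_i\bigr)\isom\ko_B$, identifies $\hat\pi_*K_{\hat E/B}\isom\ko_B\oplus\ko_B(2)$ by duality for the finite double cover $\hat E\to B$, closes the four-term sequence using relative duality for the fibration, and extracts the splitting from $\Hom(\ko_B(2),\ko_B)=0$ and $\Ext^1(\ko_B(2),\ko_B(1))=0$; the determination of $\ko_P(1)$ and of $\theta_*\hat E$ is then done much as you do it. You instead fix the splitting type numerically: the degree by Riemann--Roch on $\hat X$ plus Leray through the contraction $\bar\sigma$, and the balancedness by the bound $h^0(B,V(-2))\le 1$, which you reduce, after restricting to $\hat E$, to the \emph{same} key identity $\hat\pi_*\ko_{\hat X}\bigl(\sum_i\hat F_i\bigr)\isom\ko_B$. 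Both proofs therefore rest on that identity, but your bookkeeping buys genuine robustness, because it avoids relative duality for $\hat\pi$ altogether --- and that is exactly where the paper is fragile. Indeed, your computation $\chi(\hat X,M)=2$ is incompatible with the paper's intermediate claim $R^1\hat\pi_*M=0$: granting the conclusion $\hat\pi_*M\isom\ko_B(1)\oplus\ko_B(2)$, one has $h^0(\hat X,M)=h^0(B,\hat\pi_*M)=5$ and $h^2(\hat X,M)=0$ (since $K_{\hat X}-M$ is negative on fibres), hence $h^1(\hat X,M)=3$, while $h^1(B,\hat\pi_*M)=0$; so $R^1\hat\pi_*M$ is in fact a torsion sheaf of length $3$ supported over the $p_i$, and the duality formula as used in the paper drops a correction term coming from torsion. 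Your decision to run the Euler-characteristic count on $\bar X$, where the three length-one skyscrapers $R^1\bar\sigma_*\ko_{\hat X}$ have already been absorbed and $R^1\bar\pi_*\bar M$ really does vanish, is precisely why your count closes consistently. A further small plus: your fibrewise argument that $\hat E\to\hat\theta(\hat E)$ has degree $2$ (the two points of $\hat E\cap F_0$ form a member of the pencil cut out by $M|_{F_0}$) justifies why the image is a section and places the resulting factor $2$ correctly in the projection formula, which the paper handles rather loosely (harmlessly, since the relevant intersection number vanishes).

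Three points you should make explicit. First, for $R^1\bar\pi_*\bar M=0$ you need $h^0(\bar M|_{\bar X_b})=2$ exactly, so that Riemann--Roch on the genus-one fibres ($\chi=2$) gives $h^1=0$; part (4) of Lemma \ref{lem: lemma for type A} literally provides only a base-point free pencil, but this strengthening is the same one the paper invokes when it appeals to base change in part (5). Second, the bound $h^0\bigl((M-2F)|_{\hat E}\bigr)\le 1$ uses that $\hat E$ is irreducible (a degree-zero line bundle on an irreducible curve of arithmetic genus one has at most one section); for a reducible cycle one would need to control multidegrees, but this genericity is assumed throughout the section and is implicit in the statement itself. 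Third, the ``half-fibre cannot absorb a pole'' argument should be spelled out: away from the fibre over $p_i$ a local section of $\ko_{\hat X}\bigl(\hat F_i\bigr)$ is constant on the complete fibres, hence of the form $\hat\pi^*g$, and a pole of $g$ at $p_i$ would produce poles of order at least $2$ along $\hat F_i$ and poles along $G_i$, both forbidden; alternatively, simply quote the paper's derivation via $\sigma_*$ and the corresponding statement for multiple fibres on $\tilde X$.
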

\begin{proof}
  First note that for each $i$, $\sigma^* F_i = \hat F_i + G_i$, thus
  \[\sigma_* \ko_{\hat X}(\tsum_i \hat F_i ) =  \sigma_*\ko_{\hat X}(\tsum_i \sigma^*F_i - G_i) =  \ki_{\{q_1, q_2, q_3\}}(\tsum_i F_i)\subset \ko_{\tilde X}(\tsum_i F_i),\]
  so that by \cite[Ch.7, Exercise 2]{Friedman} we have
  \[ \hat\pi_* \ko_{\hat X}(\tsum_i \hat F_i )\subset \pi_*\ko_{\tilde X}(\tsum_i F_i) = \ko_B.\]
  Since the left hand side has a global section, this inclusion is an equality. This implies that the pushforward $\hat\pi_*(\hat \pi^*L + \sum_i \hat F_i ) = L.$
  Now consider the exact sequence
  \[ 0 \to \hat\pi^*L + \tsum_i \hat F_i \to M \to M|_{\hat E} = K_{\hat E/B}\to 0.\]
  Applying $\hat\pi_*$ we get by the projection formula, the above computation  and using both descriptions of $M$ from \eqref{eq: M},
  \begin{equation}\label{eq: ex seq type A}
    \begin{split}
      & 0\to L = \hat\pi_*\left(\hat \pi^*L + \tsum_i \hat F_i \right) \to \hat\pi_* M \to \hat \pi_*K_{\hat E/B}  = \left(\hat\pi_*\ko_{\hat E}\right)^\vee \to\\
      &\qquad  R^1\hat\pi_*\left(K_{\hat X/B} -\tsum_i 2G_i\right) \to R^1\hat\pi_* M\to ...
    \end{split}
  \end{equation}
  By relative duality we have
  \[R^1\hat\pi_*\left(K_{\hat X/B} -\tsum_i 2G_i\right) = R^1\hat\pi_*\shom\left( \tsum_i 2G_i ,  K_{\hat X/B}\right) = \shom\left(\hat\pi_*\left(\tsum_i 2G_i\right), \ko_B\right)\isom \ko_B,\] where the last identification is proved by pushing forward the exact sequence
  \[ 0 \to \ko_{\hat X} \to\ko_{\hat X}\left(\tsum_i 2G_i\right) \to \ko_{\sum_i 2G_i}\left(\tsum_i 2G_i\right)\to 0 .\]
  Indeed, $\hat\pi_* \ko_{\sum_i 2G_i}(\sum_i 2G_i) $ is a sum of skyscraper sheaves supported at the images of the $G_i$ with stalks $H^0(2G_i, \ko_{2G_i}(2G_i))$. These are zero, because $G_i^2 = -1$.

  Repeating this for $M = K_{\hat X/B} -\sum_i 2G_i + \hat E$ we get
  \[R^1\hat\pi_* M = \left(\hat\pi_*\ko_{\hat X}(\tsum_i 2G_i-\hat E)\right)^\vee=0\]
  because $\ko_{\hat X}(\sum_i 2G_i-\hat E)$ restricted to the general fibre has negative degree and thus no sections, so $\hat\pi_*\ko_{\hat X}(\sum_i 2G_i-\hat E)$ is a torsion sheaf and its dual is trivial.

  Therefore the  sequence \eqref{eq: ex seq type A} is isomorphic to
  \[ 0 \to \ko_B(1) \to \hat\pi_*M \to \ko_B \oplus \ko_B(2) \to \ko_B\to 0,\]
  and since $\Hom(\ko_B(2), \ko_B) = 0$ and $\Ext^1(\ko_B(2), \ko_B(1)) = 0$ we have $\hat\pi_*M \isom \ko(1)\oplus \ko_B(2)$.

  We now use the theory of ruled surfaces, compare  \cite[ Section V] {Hartshorne}.
  Let $\IF_1 = \IP(\ko_B(-1) \oplus \ko_B)$ so that the relative tautological bundle is $\ko_{\IF_1}(1) = \ko_{\IF_1}(C_0)$. Since $\hat\pi_*M = (\ko_B(-1) \oplus \ko_B)\tensor \ko_B(2)$ we have $\ko_P(1) = C_0 + 2F$.

  Since $\bar X \to P$ is a double cover $\hat\theta (\hat E)$ is a section of the projective bundle, that is, is an irreducible curve in a linear system $|C_0 + kF|$ for some $k$. To determine $k$ we compute by the projection formula
  \begin{multline*} k-1 = (kF +C_0)C_0 = \hat\theta_*\hat E C_0 = 2\hat E. \hat \theta^*C_0\\
    = 2 \hat E. \hat \theta^*(\ko_P(1) -2F) = 2\hat E.(M - \pi^*\ko_B(2)) ,
  \end{multline*}
  By the definition of $M$ and the projection formula we get $\hat E.(M - \pi^*\ko_B(2))  = 0$, thus $k = 1$ as claimed.
\end{proof}

We now determine the geometry and class of the ramification divisor $R \subset P$ of the double cover $\bar \theta \colon \bar X \to P$. Since the general fibre of $\hat\pi$ is an elliptic curve, the ramification divisor intersects the general fibre $P$ in four points and we have $R \sim 4C_0 + kF$ for some $k$.

Now to determine $k$  we write $\bar \theta_*\ko_{\bar X} \isom \ko_P \oplus  \inverse{\curly L} $ so that $R \in |2\curly L| = |2(2C_0 + k/2 F)|$ and  compute
\[ 1 = \chi(\tilde X ) = \chi(\hat X)  = \chi(\bar X) -3 = \chi(\bar\theta_*\ko_{\bar X}) - 3.\]
This implies $ \chi(\inverse{\curly L}) = 3$ and, using Riemann Roch with $K_P = - 2C_0 - 3 F$, we have
\[ 3 = \chi(\inverse{\curly L})= 1 + \frac12\left( -2C_0 -\frac k2 F\right)\left(-2 C_0 - \frac k2 F + 2 C_0  + 3 F \right) \implies k = 10 .\]

In other words $R \in |4C_0 + 10 F|$. We can say  more about the ramification $R$ of the double cover $\bar \theta$:
first of all $\bar\pi\colon \bar X \to B$ has exactly three double fibres while $P$ has none, so we can write $R = \tilde L_1 + \tilde L_2 + \tilde L_3 +\tilde C$ for a curve $C\in |7F + 4C_0|$ and the three fibres of $P\to B$ sitting over $p_1, p_2, p_3$.

Note that $\bar X$ has three elliptic singularities of degree $1$ and that image of $\hat E$ passes through all three of them. In addition, these elliptic singularities are contained in double fibres of $\bar \pi\colon \bar X \to B$.
By the classification of singularities of double covers \cite{FPR17, Anthes2018}
this means that $R$ has possibly degenerate $[3,3]$ points at the intersection
points $\tilde L_i \cap \hat \theta(\hat E)$ and ADE singularities elsewhere,
because $\bar X$ has no further non-canonical singularities.

Note that $C_0$ and $E$ are disjoint sections, because all irreducible curves in $|C_0+F|$ do not meet $C_0$.

Now let $\alpha\colon P \to \IP^2$ be the blow down of $C_0$. Then $\alpha(\hat\theta(\hat E))$ is a line in $\IP^2$, disjoint from the point we blow up. We may choose coordinates such that
\begin{itemize}
\item $C_0$ maps to $p =(0:0:1)$,
\item  $\alpha(\hat\theta(\hat E))$ is the line $\{z = 0\}$,
\item $\alpha(\tilde L_1)  = L_1 = \{ x = 0\}$,
\item $\alpha(\tilde L_2)  = L_2 = \{ y = 0\}$,
\item $\alpha(\tilde L_3)  = L_3 = \{ x-y = 0\}$,
\end{itemize}
Write $4C_0 + 7F = 7(C_0 + F) - 3C_0 = 7\hat \theta_*{\hat E} - 3 C_0 =7\alpha^*\ko_{\IP^2}(1) - 3C_0$ we have that
\begin{align*}
  H^0(P, 4C_0 + 7F) &\isom H^0(P, 7\alpha^*\ko_{\IP^2}(1) - 3C_0 ) \\
                    &\isom H^0(\IP^2, \alpha_*(\alpha^*\ko_{\IP^2}(7) - 3C_0 ))\\
                    &\isom H^0(\IP^2, \ko_{\IP^2}(7)\tensor \alpha_*(\ko_P(- 3C_0) ))\\
                    &\isom H^0( \IP^2, \ki_p^3(7)),
\end{align*}
and collecting the information from above we see that $\alpha(\tilde C) = C$ is a plane septic with the following properties
\begin{enumerate}
\item $C$ has at least a triple point at $p$ but $\alpha^*C-3C_0$ has ADE singularities near $C_0$,
\item $C$ has (possibly degenerate) $[2,2]$ points  at $(1:0:0)$, $(0:1:0)$, and $(1:1:0)$ which are tangent to the $L_i$, that is, $C + \sum_i L_I$ has three $[3,3]$ points at these points,
\item elsewhere $C$ has at most ADE singularities.
\end{enumerate}

Going backwards we have proved.
\begin{prop}
  Every surface of type A arise from a plane septic via the above description.
\end{prop}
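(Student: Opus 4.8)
The plan is to read the preceding computation as \emph{one} direction of a correspondence between type A surfaces and plane septics, and then to verify that every birational step in that computation can be reversed, so that a type A surface is completely recovered from the septic data $(C; p, L_1, L_2, L_3)$. First I would record what the previous paragraphs already supply: starting from a type A surface $X$ with minimal resolution $\tilde X$, the diagram above realises $\bar X$ as a double cover $\bar\theta\colon \bar X \to P \isom \IF_1$ with branch divisor $R = \tilde L_1 + \tilde L_2 + \tilde L_3 + \tilde C \in |4C_0 + 10F| = |2(2C_0+5F)|$, and blowing down $C_0$ identifies $\tilde C$ with a plane septic $C \in |\ki_p^3(7)|$ satisfying properties (1)--(3). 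This is the forward assignment $X \mapsto (C; p, L_1, L_2, L_3)$.

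To prove the proposition I would run this construction backwards and check that it returns the original surface. Given a septic $C$ with a triple point at $p$ and the three tangency conditions along $L_1, L_2, L_3$ at $(1:0:0)$, $(0:1:0)$, $(1:1:0)$, I blow up $p$ to obtain $P \isom \IF_1$ with exceptional $(-1)$-curve $C_0$; the isomorphism $H^0(\IP^2, \ki_p^3(7)) \isom H^0(P, 4C_0 + 7F)$ turns $C$ into $\tilde C \in |4C_0+7F|$. Adjoining the three fibres $\tilde L_i$ over the images $p_i$ yields $R \in |2\curly L|$ with $\curly L = 2C_0 + 5F$, which is divisible by $2$, so the double cover $\bar\theta\colon \bar X \to P$ branched over $R$ exists with $\bar\theta_*\ko_{\bar X} \isom \ko_P \oplus \inverse{\curly L}$.

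The decisive step is then the singularity bookkeeping on $\bar X$. By the classification of singularities of double covers \cite{FPR17, Anthes2018}, the three (possibly degenerate) $[3,3]$ points of $R$ lying over $\tilde L_i \cap \hat\theta(\hat E)$ produce three elliptic singularities of degree $1$, while property (3) guarantees that $\bar X$ has only ADE singularities elsewhere. A Riemann--Roch computation gives $\chi(\bar X) = 1 + \chi(\inverse{\curly L}) = 4$, hence $\chi(\hat X) = \chi(\bar X) - 3 = 1$ after resolving the three elliptic points; undoing the contractions---resolving the degree-$1$ elliptic singularities to recover the curves $\hat F_i$ with $\hat F_i^2 = -1$, then blowing down the three $(-1)$-curves $G_i$ meeting them---returns the minimal properly elliptic surface $\tilde X$ of Lemma \ref{lem: invariants properly elliptic case AB}. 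Finally, contracting the elliptic curve $E$ (and any ADE configurations disjoint from it) produces a normal surface $X$ with $K_X^2 = 1$ and $\chi(\ko_X) = 2$. Since each of these operations inverts one performed in the forward direction, the surface so obtained is the original $X$, which proves that the forward assignment is onto.

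The hard part will be this last singularity analysis rather than any of the birational bookkeeping: one must confirm, using \cite{FPR17, Anthes2018}, that the $[3,3]$ points contribute \emph{exactly} degree-$1$ elliptic singularities and nothing worse, and that the canonical divisor of the contracted surface $X$ is an \emph{ample Cartier} divisor with $K_X^2 = 1$, i.e.\ that $X$ is genuinely Gorenstein stable. The numerical constraints from Lemma \ref{lem: invariants properly elliptic case AB} together with the $\chi$- and $K^2$-counts set up above are what force these properties to hold.
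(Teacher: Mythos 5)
Your proposal is correct and takes essentially the same approach as the paper: the paper's entire proof is the sentence ``Going backwards we have proved'', i.e.\ exactly your plan of treating the preceding analysis as the forward assignment $X \mapsto (C; p, L_1, L_2, L_3)$ and inverting each step (unique double cover of $\IF_1$ branched over $R$ with square root $2C_0+5F$, minimal resolution, blow-down of the $G_i$, contraction of $E$ and the ADE configurations disjoint from it). Note only that the verification you single out as ``the hard part'' --- that an \emph{arbitrary} septic with properties (1)--(3) yields a Gorenstein stable surface --- is needed for the converse statement rather than for the surjectivity asserted here, since a septic produced by the forward construction automatically returns the original $X$ under the backward steps.
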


We should finally remark, that we have not been talking about the empty set. 

\begin{exam}
This is a reproduction of \cite[Example $Z_1$]{FPR17}. We use the notation from Section \ref{sect: bidouble}. 
  Let $D_1$ be a union of three general lines through a point $P$ on  $D_2$. The bi-double cover $X$ has an elliptic singularity of degree $1$ and, by Theorem \ref{thm: normalCase}, the minimal resolution $\tilde X$ is  a minimal  properly elliptic surface. The pencil of lines through $P$ induces an elliptic pencil on $\tilde X$ and each component of $D_1$ gives a double fibre. We illustrate our description of Type A in this case in Figure~\ref{fig:Type_A_ex}. 
The three points $q_i$ are intersection points of $D_0$ and $D_1$. 

 
  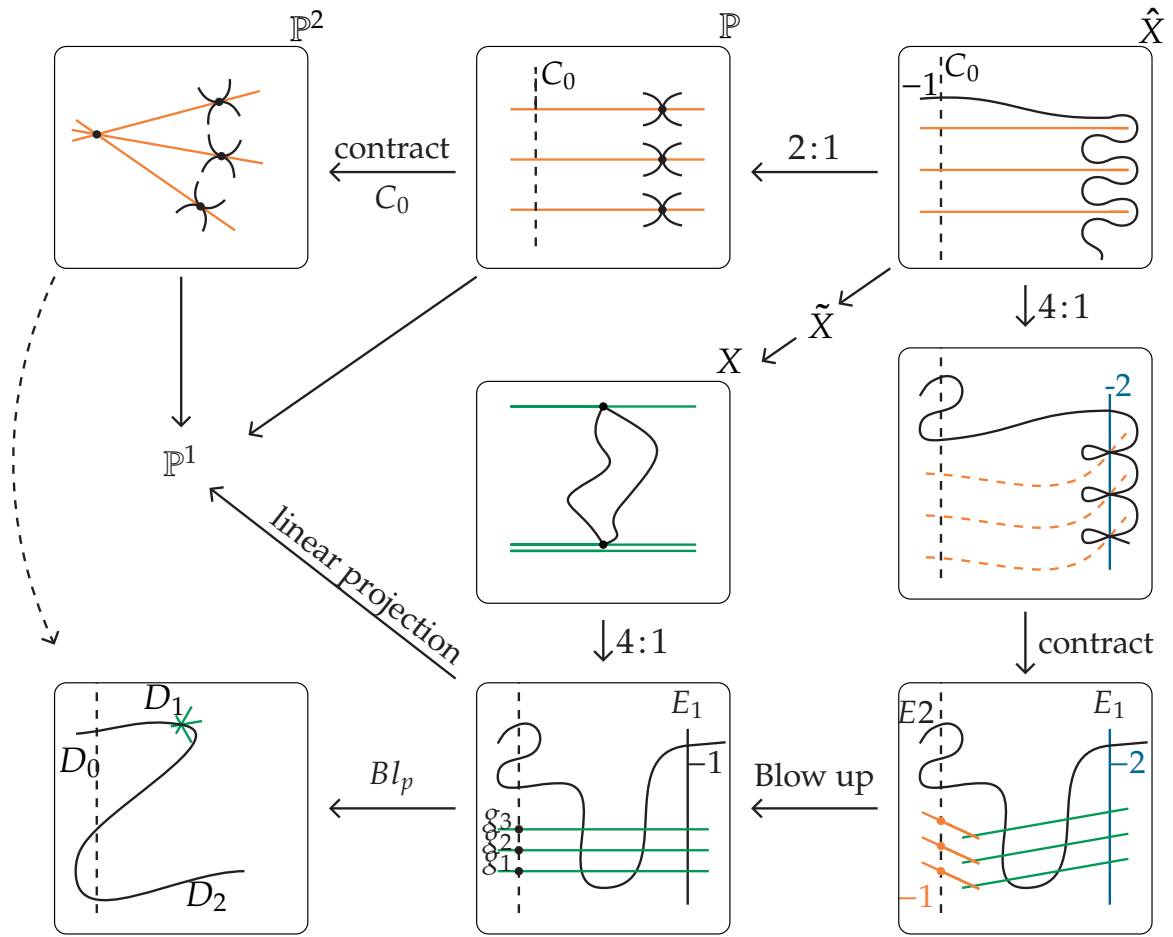
\begin{figure}
    \centering
    \resizebox{1.0\textwidth}{!}{
      \begin{tikzpicture}
        \begin{scope}[xshift = 10cm, yshift = 0cm]
          \draw[Cblack, dashed] (0, 0.2) coordinate (c0) to +(0, 2.5);

          \foreach \ii in {0, ..., 12}{
            \coordinate (x\ii) at ($(2., 0.4) + \ii*(0,0.125)$);
          }

          \foreach \ii in {1, 5, 9}{
            \draw[Corange] ([yshift=0.25cm]x\ii) -- +(0.22,0) -- +(-2.25, 0);
          }

          \draw[Cblack, looseness=1.2]
          (x12)
          to[out=180+2,in=5] ($(x12)+(-2.25,0.225)$) coordinate(t0)
          \foreach \ii in {12, 8}
          {
            (x\ii)
            to[out=20, in=90] ($(x\ii) + (0.325,-0.125)$)
            to[out=-90, in=-30] ($(x\ii) - (0, 0.25)$)
            to[out=150, in=90] ($(x\ii) - (0.325,0.375)$)
            to[out=-90, in=180+30] ($(x\ii) - (0,0.5)$)
          };
          \draw[Cblack, looseness=1.2]
          (x4)
          to[out=20, in=90] ($(x4) + (0.325,-0.125)$)
          to[out=-90, in=-30] ($(x4) - (0, 0.25)$)
          to[out=150, in=90] ($(x4) - (0.325,0.375)$)
          to[out=-90, in=70] ($(x0) +(-0.1,-0.2)$);
          \node[] at (-0.25,2.3) {\small{$-1$}};
          \node[] at (0.25, 2.5) {\small{$C_0$}};
          \node[] at (2.5,3.0) {$\hat{X}$};
          \draw[->, thick, Cblack] (-0.75, 1.25) to node[above] {$2\!:\!1$}  +(-1.5,0);
          \draw[->, thick, Cblack] (1.0, -.1) to node[right] {$4\!:\!1$}  +(0,-0.5);
          \draw[->, thick, Cblack] (-0.6, 0.1) -- ++(180+35:0.75cm) coordinate (px);
          \node[xshift=-0.20cm, yshift=-0.2cm] at (px) {$\tilde{X}$};
          \draw[->, thick, Cblack] ([xshift=-0.5cm, yshift=-0.4cm]px) -- ++(180+35:0.5cm);
          \draw[rounded corners] (-0.5, 2.75) rectangle (2.5, 0.1);
        \end{scope}

        \begin{scope}[xshift = 5.0cm, yshift = 0cm]
          \coordinate (P) at (0.2,2.0);
          \foreach \s[count=\si] in {0,0.6,1.2}{
            \draw[Corange,rotate=-90]
            {($(P)+(\s, 0)$) -- ++(0,1.5) coordinate (p\si) -- +(0,.5)}
            {($(P)+(\s, 0)$) -- +(0,-0.3)};
            \draw[Cblack,rotate=\s]
            {(p\si) to[out=100, in=-10] +(180-40:0.3)}
            {(p\si) to[out=-100, in=10] +(180+40:0.3)}
            {(p\si) to[out=80, in=190] +(40:0.3)}
            {(p\si) to[out=-80, in=170] +(-40:0.3)};
            \fill[Cblack] (p\si) circle(0.05);
          };
          \draw[Cblack, dashed] (P) -- +(0,0.5) -- +(0,-1.7);
          \node[] at ([yshift=0.4cm, xshift=0.27cm]P) {\small{$C_0$}};
          \node[] at (2.5,3.0) {$\IP$};
          \draw[->, thick, Cblack] (-0.75, 1.25) to
          node[above] {\small{contract}} node[below]{\small{$C_0$}}  +(-1.5,0);
          \draw[->, thick, Cblack] (-0.5, -0) -- +(180+35:3.3cm);
          \draw[rounded corners] (-0.5, 2.75) rectangle (2.5, 0.1);
        \end{scope}

        \begin{scope}[xshift = 0cm, yshift = 0cm]
          \coordinate (P) at (0,1.7);
          \foreach \s[count=\si] in {-75,-100,-125}{
            \draw[Corange,rotate=\s]
            {(P) -- ++(0,1.5) coordinate (p\si) -- +(0,.5)}
            {(P) -- +(0,-0.3)};
            \draw[Cblack,rotate=\s]
            {(p\si) to[out=100, in=-10] +(180-40:0.3)}
            {(p\si) to[out=-100, in=10] +(180+40:0.3)}
            {(p\si) to[out=80, in=190] +(40:0.3)}
            {(p\si) to[out=-80, in=170] +(-40:0.3)};
            \fill[Cblack] (p\si) circle(0.05);
          };
          \fill[Cblack] (P) circle (0.05);
          \node[] at (2.5,3.0) {$\IP^2$};
          \draw[->, thick, Cblack] (1,-0.0) -- +(0, -1.8) node[yshift=-0.4cm]{$\IP^1$};
          \draw[->, thick, dashed, Cblack] (-0.5,0) arc (180-25:180+25:5.2);
          \draw[rounded corners] (-0.5, 2.75) rectangle (2.5, 0.1);
        \end{scope}

        \begin{scope}[xshift = 5.0 cm, yshift = -4cm]
          \coordinate (pt) at (1, 2.45);
          \coordinate (pb) at (1, 0.8);
          \draw[Cgreen] (pt) -- +(-1.1,0) -- +(1.1,0);
          \draw[Cgreen] (pb) -- +(-1.1,0) -- +(1.1,0);
          \draw[Cgreen] ([yshift=-0.075cm]pb) -- +(-1.1,0) -- +(1.1,0);

          \draw[Cblack, looseness=2] (pt) to[out=-120,in=30] ++(-100:0.9)
          to[out=180+30,in=120] (pb);
          \draw[Cblack, looseness=2] (pt) to[out=-30,in=50] ++(-80:1.2)
          to[out=180+50,in=20] (pb);

          \fill (pt) circle(0.05);
          \fill (pb) circle(0.05);
          \node[] at (2.5,3.0) {$X$};
          \draw[->, thick, Cblack] (1.0, -.1) to node[right] {$4\!:\!1$}  +(0,-0.5);
          \draw[rounded corners] (-0.5, 2.75) rectangle (2.5, 0.1);
        \end{scope}

        \begin{scope}[xshift = 10cm, yshift = -3.6cm]
          \draw[Cblack, dashed] (0, 0) coordinate (c0) to +(0, 2.75);
          \draw[Cblue] (2., 0.1) coordinate (c1) to +(0, 2.1);

          \foreach \ii in {0, ..., 13}{
            \coordinate (x\ii) at ($(2., 0.375) + \ii*(0,0.125)$);
          }
          \foreach \ii in {1, 5, 9}{
            \draw[Corange, dashed, shorten <=3]
            (x\ii) -- +(50:0.3)
            (x\ii)
            to[out=180+50,in=0] +(-2.25,-0.25)
            ;
          }
          \draw[Cblack, looseness=1.2]
          (x13)
          to[out=180,in=5] ($(x12)+(-2.,-0.225)$) coordinate(t0)
          to[out=-175,in=-90] ($(t0) +(-0.25,0.125)$)
          to[out=90,in=180+80] ($(t0) +(0.25,0.55)$)
          to[out=80, in=20] ($(t0) +(0,0.75)$)
          to[out=180+20, in=60]($(t0) +(-0.25,0.525)$)
          \foreach \ii in {13, 9, 5}{
            (x\ii)
            to[out=-10, in=90] ($(x\ii) + (0.325,-0.25)$)
            to[out=-90, in=10] ($(x\ii) - (0, 0.5)$)
            to[out=160, in=90] ($(x\ii) - (0.325,0.5)$)
            to[out=-90, in=180+20] ($(x\ii) - (0,0.5)$)
          }
          -- +(-20:0.25);
          \node[Cblue] at (2.1,2.3) {\small{-2}};
          \draw[rounded corners] (-0.5, 2.75) rectangle (2.5, -0.25);
          \draw[->, Cblack, thick] (1.0, -.4) to node[right] {\small{contract}}  +(0,-0.75);
        \end{scope}

        \begin{scope}[xshift = 10cm, yshift = -7.6cm]
          \draw[Cblack, dashed] (0, 0) coordinate (c0) to +(0, 2.75);
          \draw[Cblue] (2., 0.1) to +(0, 2.1);

          \coordinate (t0) at (0,1.5);
          \draw[Cblack, looseness=1.2]
          (t0)
          to[out=-175,in=-90] ($(t0) +(-0.25,0.125)$)
          to[out=90,in=180+80] ($(t0) +(0.25,0.55)$)
          to[out=80, in=20] ($(t0) +(0,0.75)$)
          to[out=180+20, in=60]($(t0) +(-0.25,0.525)$)
          (t0)
          to[out=5, in=180] ($(t0)+(0.5,0.05)$)
          to[out=-0,in=180] +(0.5,-1.25)
          to[out=0,in=180+5] (2,2)
          -- +(5:0.45);

          \foreach \ii in {1,2,3}{
            \draw[Cgreen] (0.25,\ii*0.3) -- +(10:2cm);
          }
          \foreach \ii[count=\iic] in {0.5, 0.8, 1.1}{
            \draw[Corange] (0,\ii) coordinate (ec\iic) -- +(-25:0.5cm)
            -- +(180-25:0.25cm);
            \fill[Corange] (ec\iic) circle(0.05cm);
          }
          \node[Cblack] at (2,2.5) {\small{$E_1$}};
          \node[Cblue] at (2.2,1.8) {\small{$-2$}};
          \node[Corange] at (-0.3, 0.2) {\small{$-1$}};
          \node[Cblack] at (-0.3, 2.4) {\small{$E2$}};
          \draw[->, thick, Cblack] (-0.75, 1.25) to node[above] {\small{Blow up}}  +(-1.5,0);
          \draw[rounded corners] (-0.5, 2.75) rectangle (2.5, -0.25);
        \end{scope}

        \begin{scope}[xshift = 5.0 cm, yshift = -7.6cm]
          \draw[Cblack, dashed] (0, 0) coordinate (c0) to +(0, 2.75);
          \draw[Cblack] (2., 0.1) to +(0, 2.1);

          \coordinate (t0) at (0,1.5);
          \draw[Cblack, looseness=1.2]
          (t0)
          to[out=-175,in=-90] ($(t0) +(-0.25,0.125)$)
          to[out=90,in=180+80] ($(t0) +(0.25,0.55)$)
          to[out=80, in=20] ($(t0) +(0,0.75)$)
          to[out=180+20, in=60]($(t0) +(-0.25,0.525)$)
          (t0)
          to[out=5, in=180] ($(t0)+(0.5,0.05)$)
          to[out=-0,in=180] +(0.5,-1.25)
          to[out=0,in=180+5] (2,2)
          -- +(5:0.45)
          ;

          \foreach \ii/\iic in {0.5/1, 0.75/2, 1/3}{
            \draw[Cgreen] (-0.25,\ii) node[yshift=0.1cm, Cblack] {\small{$g_\iic$}} -- (2.25,\ii);
            \fill[Cblack] (0,\ii) circle(0.05);
          }
          \node[Cblack] at (2,2.5) {\small{$E_1$}};
          \node[Cblack] at (2.2,1.8) {\small{$-1$}};
          \draw[rounded corners] (-0.5, 2.75) rectangle (2.5, -0.25);
          \draw[->, thick, Cblack] (-0.75, 2.8) to
          node[right, rotate=-38,yshift=0.2cm, xshift=-1.2cm]{\small{linear projection}}
          +(180-38: 3.7cm);
          \draw[->, thick, Cblack] (-0.75, 1.25) to node[above] {\small{$Bl_{p}$}}  +(-1.5,0);
        \end{scope}

        \begin{scope}[xshift = 0cm, yshift = -7.6cm]
          \coordinate (O) at (1,2.25);
          \foreach \ag in {10, -60, 60}{
            \draw[Cgreen] (O) -- +(\ag:0.25)
            (O) -- +(180+\ag:0.125);
          }

          \draw[Cblack]  (O)
          to[out=180-10,in=5] +(185:1.25)
          (O)
          to[out=-10, in=45] (0,1)
          to[out=180+45, in =90] +(-0.25,-0.5)
          to[out=-90, in=180] +(2,0)
          ;
          \node[] at (-0.2,1.8) {$D_0$};
          \node[xshift=-0.2cm, yshift=0.3cm] at (O) {$D_1$};
          \node[] at (1.3,0.2) {$D_2$};
          \draw[Cblack, dashed] (0, 0) coordinate (c0) to +(0, 2.75);
          \draw[rounded corners] (-0.5, 2.75) rectangle (2.5, -0.25);
        \end{scope}
      \end{tikzpicture}
    }
    \caption{ Surface with properly elliptic minimal resolution, Type A }
    \label{fig:Type_A_ex}
  \end{figure}

\end{exam}
\subsubsection{Type B}\label{typeB}

\begin{prop}
  If $\pi\colon \tilde X\to B$ is as in case B of Lemma \ref{lem: invariants properly elliptic case AB} then
  $B$ is an elliptic curve, $E$ is a section, hence smooth elliptic, $K_{\tilde X} = \pi^*L$.
\end{prop}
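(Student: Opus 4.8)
The plan is to exploit the two structural formulas already available --- the canonical bundle formula \eqref{eq: can bundle formula} and the description of pluricanonical sections \eqref{eq: pluricanonical sections} --- together with the numerical data collected in Lemma \ref{lem: invariants properly elliptic case AB}. Since we are in case B, that lemma already gives $g(B) = 1$, so $B$ is an elliptic curve and $K_B = \ko_B$; the two remaining assertions are that $\pi$ has no multiple fibres (which yields $K_{\tilde X} = \pi^* L$) and that $E$ is a section.

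First I would count the multiple fibres. Writing $m_1, \dots, m_r$ for the multiplicities, I plug $m = 2$, $K_B = \ko_B$ and $\deg L = 1$ into \eqref{eq: pluricanonical sections}. Each multiple fibre contributes $\lfloor 2(m_i-1)/m_i\rfloor = 1$, so the right-hand divisor on $B$ has degree $2 + r$. On an elliptic curve a divisor of positive degree $d$ is nonspecial, hence $h^0 = d$, so by Lemma \ref{lem: invariants properly elliptic case AB} we get
\[ 2 = h^0(2K_{\tilde X}) = 2 + r. \]
Therefore $r = 0$: there are no multiple fibres, and \eqref{eq: can bundle formula} collapses to $K_{\tilde X} = \pi^*(K_B + L) = \pi^* L$.

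It remains to show $E$ is a section. Numerically $K_{\tilde X} = \pi^* L \equiv F$ since $\deg L = 1$, so the intersection number $K_{\tilde X} E = 1$ from Lemma \ref{lem: invariants properly elliptic case AB} becomes $E \cdot F = 1$; thus $\pi|_E \colon E \to B$ has degree one. The key point --- and the only place where real care is needed --- is that this forces $E$ to be smooth elliptic rather than a cycle of rational curves: the normalisation of any rational component of $E$ would admit only constant maps to the elliptic curve $B$, hence that component would lie in a fibre and contribute $0$ to $E \cdot F$, contradicting $E \cdot F = 1$. More precisely, composing $\pi|_E$ with the normalisation gives a degree-one morphism from a smooth curve onto $B$, hence an isomorphism; comparing with $p_a(E) = 1$ shows $E$ is already smooth, $E \isom B$, and $\pi|_E$ is an isomorphism, i.e.\ $E$ is a section. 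The bookkeeping in the multiple-fibre count is entirely routine; the one substantive step is this exclusion of singular or reducible $E$, which rests precisely on $g(B) = 1$.
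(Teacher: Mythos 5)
Your proof is correct, but it reaches the two key facts --- no multiple fibres and $E$ a section --- by a genuinely different route than the paper. The paper's proof is a single intersection-theoretic computation: since the fibration is minimal, $E$ cannot lie in a fibre, so it is a $k$-multisection, and intersecting the canonical bundle formula \eqref{eq: can bundle formula} (with $K_B$ trivial) with $E$ gives
\[ 1 = K_{\tilde X}\cdot E = E\cdot \pi^*L + \tsum_i (m_i-1)\, F_i\cdot E = k + \tsum_i (m_i-1)\, F_i\cdot E, \]
where every summand is a non-negative integer; this forces $k=1$ and the absence of multiple fibres simultaneously, giving the section property and $K_{\tilde X} = \pi^*L$ in one stroke. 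You instead first eliminate the multiple fibres via the section count $h^0(2K_{\tilde X})=2$ fed into \eqref{eq: pluricanonical sections} together with Riemann--Roch on the elliptic base --- exactly the mechanism the paper uses in Type A to count the three double fibres --- and only afterwards use $K_{\tilde X}\cdot E = 1$ to deduce $E\cdot F = 1$. Each approach buys something: the paper's is more economical and handles both claims at once; yours decouples the fibration data from the curve $E$ (the multiple-fibre count needs no information about $E$ at all), and it makes explicit a point the paper leaves implicit, namely why $E\cdot F=1$ rules out the cycle-of-rational-curves alternative --- rational components of $E$ would be vertical and contribute $0$ to $E\cdot F$ --- so that $E$ is genuinely smooth elliptic and $\pi|_E$ an isomorphism.
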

\begin{proof}
  Again $E$ cannot be contained in a fibre since the $\pi$ is a minimal elliptic fibration, so $E$ is a $k$-multisection.
  If we apply the canonical bundle formula \eqref{eq: can bundle formula}   with trivial $K_B$ then we get
  \[1 = E.K_{\tilde X} = E. \pi^*L + \sum_i (m_i-1) F_i . E= k +\sum_i (m_i-1) \]
  and hence $k = 1$ and no multiple fibres.
\end{proof}
\begin{rem}
  Surfaces of type B have Weierstrass models as in \cite[Sect. 7, Thm. 20]{Friedman} and are thus parametrised by an irreducible family.
\end{rem}

\begin{exam}
This is a reproduction of \cite[Example $Z_1'$]{FPR17}. We use the notation from Section \ref{sect: bidouble}. 

  Let $D_1$ be a union of three general lines  through $P \in D_0$ and $D_2$ a
  general cubic. Then $X$ has a unique elliptic singularity of degree $1$. Blowing
  up at $P$ and applying the normalisation algorithm from Remark \ref{rem: normalise} we see that the pencil of lines through $P$ induces the canonical pencil on $\tilde X$ (Figure~\ref{fig:type_B}).  It was claimed in \cite{FPR17}, that the elliptic fibration has $4$ multiple fibres, but examining the building data of the normalisation more closely, we see that the induced map $\tilde X\to \IP^1$ has disconnected fibres and the four lines $D_0 + D_1$ map to the branch points of the Stein factorisation. 
  \end{exam}
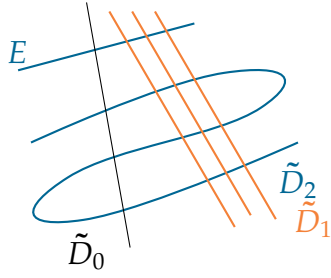
\begin{figure}
  \centering
  \begin{tikzpicture}
    [pfeil/.style = {->, every node/.style = { font = \scriptsize}},
    scale = .6]
    \begin{scope}[xshift=0cm, yshift=0cm]
      \draw[C1col, yshift=-1.0cm, xshift=3cm, rotate=20]
      plot [smooth, tension=1]
      coordinates { (-2.5,4) (2,4) (2.5,3)
        (-2,3) (-2,2) (3,2)} node[yshift=-0.5cm]{$\tilde{D}_2$};
      \draw[C1col] (-1,3.5) -- +(15:4cm);
      \draw[C1col] (-1,3.9) node[]{$E$};
      \draw[] (0.5,5) -- +(-80:5.5cm);
      \draw[C2col] (1,4.8) -- +(-60:5.5cm);
      \draw[C2col] (1.5,4.8) -- +(-60:5.2cm);
      \draw[C2col] (2.0,4.8) -- +(-60:5.3cm) node[xshift=0.5cm]{$\tilde{D}_1$};
      \draw[] (0.5cm,-0.5cm) node[]{$\tilde{D}_0$};
    \end{scope}
  \end{tikzpicture}
  \caption{ Building data for the minimal resolution of a surface with properly elliptic minimal resolution, Type B }
  \label{fig:type_B}
\end{figure}

\subsection{Enriques case}

\label{sect: Enriques case}
In this case, $X$ has a unique elliptic singularity of degree $2$, the minimal resolution $\tilde X$ is an Enriques surface blown up in one point, which is a node on a nodal curve of genus $2$. Thus, at least as a set, an open subset of  this stratum is in bijection to the isomorphism classes of pairs
\[ \ke = \left\{ (Y, C)\mid Y\text{ Enriques surface}, C \text{ nodal,non-smooth,  ample curve}, p_a(C) = 2\right\}.\]

This suggests to study this stratum via the presumably finite and dominant map to the moduli space of Enriques surfaces. While quite some information is available on the latter (see e.g. \cite{CossecDolgachev, GritsenkoHulek}), the construction and exploration of $\ke$ goes beyond the scope of this article.

Note however that $\ke$ is non-empty: either one can argue that in the linear system associated to a degree $2$ polarisation on an Enriques surface not every member can be smooth for topological reasons, so in general there is a curve with just one node and arithmetic genus $2$. Alternatively, an explicit example of a stable surface in this stratum was constructed in \cite[Example $Z_2^{E}$]{FPR17}. 

    \subsection{Torus case}
            \label{sect: torus case}
            Now we cover the case, where the minimal resolution is birational to an abelian variety, i.e., a projective complex torus.

            \begin{prop}
            Let $X$ be a Gorenstein stable surface with $K_X^2 = 1$ and $\chi(X) =2$ such that the minimal model of a resolution is a torus. Then there exist elliptic curves $E_1, E_2$ and a commutative diagram
            \[
                \begin{tikzcd}
                {}
                &\tilde X
                \arrow{dr}[swap]{\eta}
                \arrow{dl}{\epsilon}[swap]{\text{blow down $\hat E_1,\hat E_2$}}\\
                X\arrow{dd}[swap]{\phi}& &
                E_1\times E_2\ar{dd}{p_1\times p_2}\\
                & {\text{\scriptsize $(\IZ/2)^2$-covers}}\\
                \IP^2
                \ar[dashed]{rr}{\text{projections}}[swap]{\text{from $P$ and $Q$}}
                && \IP^1\times \IP^1
                \end{tikzcd}
            \]
            such that
            \begin{enumerate}
            \item the bicanonical map $\phi$ is a bi-double cover with building data a line $D_0$ containing two points $P$ and $Q$, $D_1$ three lines through $P$ and $D_2$ three lines through $Q$.
            \item $p_i\colon E_i \to \IP^1$ is the natural double cover.
            \item $\eta$ is the blow up of the intersection of point of $E_1\cup E_2 \subset E_1 \times E_2$.
            \item $\epsilon$ is the contraction of the strict transform of $E_1\cup E_2 $.
            \end{enumerate}
            Figure \ref{fig:Torus} depicts the change of building data under resolution of the birational transformation $\IP^2 \dashrightarrow \IP^1\times \IP^1$.
            \end{prop}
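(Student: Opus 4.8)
The plan is to reverse-engineer the diagram by starting from the bi-double cover structure guaranteed by the building data, and then matching it against the geometry forced by the torus hypothesis. I would first recall from Theorem \ref{thm: normalCase} that in the $\kappa(\tilde X)=0$ case with minimal model a torus, $X$ has exactly two elliptic singularities of degree $1$ (since $\chi(\tilde X)=0$), so $\epsilon\colon\tilde X\to X$ contracts two disjoint elliptic curves, and these exceptional divisors are smooth elliptic curves. Call them $\hat E_1,\hat E_2$. Since the minimal model is $\tilde X_{min}=E_1\times E_2$ for elliptic curves $E_i$, the map $\eta$ blows down some $(-1)$-curves; I would identify $\hat E_1,\hat E_2$ with the strict transforms of two fibres $E_1\times\{pt\}$ and $\{pt\}\times E_2$ meeting in a single point, so that $\eta$ is the blow-up of that one intersection point (this gives $K_{\tilde X}^2=0$, consistent with Lemma-type computations, and makes $\hat E_1\cup\hat E_2$ a cycle of two rational\,/\,elliptic curves with the right self-intersections after blow-up). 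The reduction step in Remark \ref{rem: normalise} is the tool I would invoke to control how the building data transforms.

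Next I would pin down the building data on $\IP^2$. By Proposition \ref{prop: aut} and the bi-double cover description in Section \ref{sect: bidouble}, $\phi$ is determined by a line $D_0$ and two cubics $D_1,D_2$, with the pair $(\IP^2,\frac12(D_0+D_1+D_2))$ log-canonical and empty triple intersection. To force the torus, I would argue that the two elliptic singularities of degree $1$ correspond, via the singularity dictionary at the end of Section \ref{sect: bidouble}, to two ordinary quadruple points of $D=2\Delta$ where three local branches lie in the same $D_i$. The only way to realise a torus as minimal model (rather than, say, bielliptic or Enriques) is to arrange $D_1$ as three concurrent lines through a point $P\in D_0$ and $D_2$ as three concurrent lines through a point $Q\in D_0$, with $P\neq Q$ both on the line $D_0$. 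This is where I would lean on the Enriques classification: running the normalisation algorithm and the log-resolution of the pencils of lines through $P$ and through $Q$ should exhibit $\tilde X$ as birational to a product of two elliptic curves, each arising as the double cover $p_i\colon E_i\to\IP^1$ branched over the four points cut out on the respective pencil.

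To build the right-hand side of the diagram I would resolve the birational map $\IP^2\dashrightarrow\IP^1\times\IP^1$ given by the two projections from $P$ and from $Q$. Blowing up $P$ and $Q$ and contracting the strict transform of the line $D_0$ realises $\IP^1\times\IP^1$, with the two rulings being exactly the pencils of lines through $P$ and through $Q$. The three lines of $D_1$ and three lines of $D_2$ become (after the reduction of coefficients modulo $2$ and the shuffling in step (2) of Remark \ref{rem: normalise}) the branch data of the two double covers $p_1\times p_2\colon E_1\times E_2\to\IP^1\times\IP^1$. Chasing the building data through this resolution, and checking that the resulting abelian cover of $\IP^1\times\IP^1$ is precisely $E_1\times E_2$, produces $\eta$ as the blow-up of the single intersection point of $E_1\cup E_2$ and identifies $\epsilon$ as the contraction of its strict transform. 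Commutativity of the square is then the statement that both routes compute the same bi-double cover, which follows from functoriality of abelian covers under the log-resolution.

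The main obstacle I anticipate is the bookkeeping in the central claim that the specific configuration ``$D_1$ three lines through $P$, $D_2$ three lines through $Q$, both on $D_0$'' is the \emph{only} configuration compatible with the torus hypothesis, and that it genuinely yields a product $E_1\times E_2$ rather than a non-trivial elliptic-surface\,/\,bielliptic degeneration. Distinguishing the torus case from the bielliptic case (which shares the invariants $(1,1)$ and $\chi=0$ in Table \ref{tab: normal cases}) will require a careful analysis of how the $(\IZ/2)^2$-action descends: in the torus case both quotient involutions must act as translations on each elliptic factor so the minimal model is abelian, whereas a fixed-point contribution would produce a bielliptic quotient. Verifying this, presumably by tracking the monodromy of the pencils and the position of $P,Q$ on $D_0$ relative to the branch points, is the delicate heart of the argument; the rest is the essentially formal resolution of the Cremona-type map and the transformation of building data already prepared in Remark \ref{rem: normalise}.
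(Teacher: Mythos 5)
Your proposal has a genuine logical gap: it is circular at the decisive point. In your second paragraph you write that ``by Proposition \ref{prop: aut} and the bi-double cover description in Section \ref{sect: bidouble}, $\phi$ is determined by a line $D_0$ and two cubics $D_1,D_2$.'' But the machinery of Section \ref{sect: bidouble} applies only when $\Aut(X/\IP^2) = (\IZ/2)^2$, and by Proposition \ref{prop: aut} this holds if and only if one can choose $a_1 = a_2 = 0$ in the canonical ring equations --- which is not known a priori for a surface in this stratum. Indeed, establishing that the bicanonical map \emph{is} a bi-double cover is precisely item 1 of the proposition you are trying to prove (and it genuinely can fail for other strata: the case $(E_+)$ surfaces admit no bi-double cover structure, as shown in Section \ref{sect: caseE}). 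Starting from the building data and ``matching against the torus hypothesis'' therefore begs the question, and the difficulty you flag yourself --- showing that this configuration is the \emph{only} one compatible with the torus hypothesis and distinguishing it from the bielliptic case --- is exactly the unresolved hard part of your route; you describe it but do not supply an argument.

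The paper's proof avoids this entirely by working forwards from the torus. It first derives the product structure (you assume $\tilde X_{min} = E_1\times E_2$, but the hypothesis only gives a torus; the paper obtains two elliptic curves $E_1, E_2$ with $E_1E_2=1$ from Theorem \ref{thm: normalCase} and shows the addition map $E_1\times E_2 \to \tilde X_{min}$ is an isomorphism once the intersection point is taken as origin). Then it computes $\epsilon^*K_X = \eta^*(E_1+E_2) - F$ and identifies
\[
H^0(X,2K_X) \isom H^0\bigl(E_1\times E_2,\ \ki_{E_1\cap E_2}^2(2E_1+2E_2)\bigr)
= \langle x_1\tensor x_2,\ x_1\tensor y_2,\ y_1\tensor x_2\rangle,
\]
a three-dimensional subspace of $H^0(E_1,p_1^*\ko_{\IP^1}(1))\tensor H^0(E_2,p_2^*\ko_{\IP^1}(1))$. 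These three sections exhibit the bicanonical map as the composition of $p_1\times p_2$ with the inverse of the projections $\IP^2\dashrightarrow \IP^1\times\IP^1$ from $P$ and $Q$; the $(\IZ/2)^2$-cover structure and the building data (a line plus two triples of concurrent lines) then come out as \emph{consequences} of this factorisation, rather than being assumed. If you want to salvage your direction of argument, you would first need an independent proof that every surface in this stratum carries a $(\IZ/2)^2$-action over $\IP^2$ --- and the only visible way to get that is the explicit section computation the paper performs.
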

            \begin{proof}
            Let $\epsilon\colon  \tilde X \to X $ be the minimal resolution and $\eta \colon \tilde X  \to \tilde X_{\text{min}}$ be a map to a minimal model.
            By Theorem \ref{thm: normalCase} and our assumptions, $\tilde X_\text{min}$ is  a torus containing two elliptic curves $E_1$ and $E_2$ such that $E_1E_2 = 1$. Using the intersection point as origin for everything, the addition map $E_1\times E_2 \to \tilde X_\text{min}$ is an isomorphism.

            We have proved the properties attributed to the upper part of the diagram.

            Denoting the exceptional curve of $\eta$ with $F$ we have
            \begin{align*}
                \epsilon^*{K_X} &  = K_{\tilde X} + \hat E_1 + \hat E_2\\
                                & = \eta^*K_{E_1\times E_2} + F + \hat E_1 + \hat E_2\\
                                & = \eta^*(E_1+E_2) - F
            \end{align*}
            and thus
            \begin{align*}
                H^0(X, 2K_X) &\isom H^0(\tilde X, \epsilon^*2K_X)\\
                            & = H^0(\tilde X, 2\eta^*(E_1+E_2) - 2F)\\
                            & \isom H^0(E_1\times E_2, \ki_{E_1\cap E_2}^2(2E_1+2E_2))\\
                            & \subset  H^0(E_1\times E_2, 2E_1+2E_2)\\
                            & =H^0(E_1, p_1^*\ko_{\IP^1}(1)) \tensor H^0(E_2, p_2^*\ko_{\IP^1}(1))\\
                            & = p_1^*H^0(\IP^1, \ko_{\IP^1}(1))\tensor p_2^*H^0(\IP^1, \ko_{\IP^1}(1))\\
                            & = \langle x_1, y_1\rangle \tensor \langle x_2, y_2\rangle.
            \end{align*}
            where we choose the sections such that $x_i$ cuts out the divisor $2E_i$.
            With these coordinates and identifications, the bicanonical map of $X$ is defined by the sections $\langle x_1\tensor x_2, x_1\tensor y_2, y_1\tensor x_2\rangle$. On $\IP^1\times \IP^1$; these sections define exactly the inverse of the lower horizontal map in the diagram.

            Thus the diagram commutes and all remaining claims follow easily.
            \end{proof}

            \begin{figure}
            \centering
            \begin{tikzpicture}
                [pfeil/.style = {->, every node/.style = { font = \scriptsize}},
                extended line/.style={shorten >=-#1,shorten <=-#1}]

                \begin{scope}
                \coordinate (P) at  (-1.5, -0.5 );
                \coordinate (Q) at  ($(P) +(15:2.5)$);
                \draw[Cblack, extended line = 0.75cm] (P) -- (Q);
                \draw[Cblack, dashed] (P) -- +(90:0.75);
                \foreach \s in {1, 2, 3}{
                    \draw[Cblack, dashed, extended line = 0.3cm]
                    (P) -- +(270:0.5*\s) coordinate (t\s);
                    \draw[C1col] (t\s) -- +(20:0.5);
                    \draw[C1col] (t\s) -- +(180 + 20:0.5);
                };
                \draw[Cblack, dashed] (Q) -- +(110:0.5);
                \foreach \s in {1, 2, 3}{
                    \draw[Cblack, dashed, extended line = 0.3cm] (Q) -- +(180+110:0.5*\s) coordinate (t\s);
                    \draw[C2col] (t\s) -- +(20:0.5);
                    \draw[C2col] (t\s) -- +(180 + 20:0.5);
                };
                \draw (P) +(180+60:1.5) node[C1col]{$\tilde{D}_1$};
                \draw (Q) +(-40:1.5) node[C2col]{$\tilde{D}_2$};
                \draw ($(P)!0.5!(Q)$)
                +(0.0,0.75) node[]{$\tilde{D}_0$}
                +(0, 0.25) node[]{$(-1)$};
                \draw[pfeil] (-0.5,-2.5) to
                node[above, xshift=-2.2em]{blow up $P$, $Q$}
                +(180+40:2.);
                \draw[pfeil] (.5,-2.5) to
                node[xshift=3.6em]{contract $(-1)$ curve}
                +(-50:1.5);
                \end{scope}

                \begin{scope}[xshift = -2.0cm, yshift = -5.0cm]
                \coordinate (P) at  (-1.5, -0.5 );
                \coordinate (Q) at  ($(P) +(15:2.0)$);
                \draw[Cblack, extended line = 1cm] (P) -- (Q);
                \foreach \angle in {60, 90, 120}{
                    \draw[C1col] (P) -- +(\angle:0.75);
                    \draw[C1col] (P) -- +(180 + \angle:0.75);
                };
                \foreach \angle in {60, 90, 120}{
                    \draw[C2col] (Q) -- +(\angle:0.75);
                    \draw[C2col] (Q) -- +(180 + \angle:0.75);
                };
                \fill (P) circle (2pt) +(0.4cm,-0.1cm) node[]{$P$};
                \fill (Q) circle (2pt) +(0.4cm,-0.1cm) node[]{$Q$};

                \draw (-2.5, -0.5) node[]{$D_0$};
                \draw (P) +(70:1) node[C1col]{$D_1$};
                \draw (Q) +(-70:1) node[C2col]{$D_2$};
                \end{scope}

                \begin{scope}[xshift = 3cm, yshift = -4cm]
                \coordinate (O) at (-1.5,-0.5);
                \draw[Corange, dashed] ($(O)+(90:0.25)$) -- ($(O)+(-90:1.75)$);
                \draw[Cblue, dashed] ($(O)+(180+20:0.25)$) -- ($(O)+(20:1.75)$);
                \foreach \s in {1,2,3}{
                    \coordinate (c\s) at ($(O)+(20:0.5*\s)$);
                    \draw[Corange] ($(c\s)+(90:0.25)$) -- ($(c\s)+(-90:1.75)$);
                    \coordinate (r\s) at ($(O)+(-90:0.5*\s)$);
                    \draw[Cblue] ($(r\s)+(180+20:0.25)$) -- ($(r\s)+(20:1.75)$);
                }
                \end{scope}
            \end{tikzpicture}
            \caption{ building data in the Torus case}
            \label{fig:Torus}
            \end{figure}

\subsection{Bielliptic surface case}
\label{sect: bielliptic case}
In this case, $\tilde X$ has two elliptic singularities of degree $1$ and its minimal resolution $\tilde X$ has Kodaira dimension $0$ and as minimal model a bi-elliptic surface.


We quickly recall the classification of bi-elliptic surfaces, which are the surfaces of Kodaira dimension $0$ with $\chi(\tilde X) = 0 = p_g(\tilde X)$.

Let $A, B$ be elliptic curve and $G$ a finite group acting on $A$ by translations and on $B$ such that $B/G\isom \IP^1$.
Then $\tilde X$ is of the form $\tilde X = A\times B/G$ and the possible cases where classified by  Bagnera and de Franchis \cite{BaSa07}. We give this classification in  Table \ref{tab: bielliptic}, where the $m_i$ are   the multiplicities of the multiple fibres of $\pi_2 \colon \tilde X \to B/G \isom \IP^1$.  Then let 
\[\mu = \text{lcm}\{m_1, ..., m_s\}\text{ and } \gamma = \text{ order of $G$},\].
Note that a basis of Num$(S)$ consists of divisors $A/ \mu$ and $(\mu/ \gamma)B$ where $A^2 = 0, B^2 = 0, AB = \gamma.$

Note that $\tilde X$ does not contain any rational curves, as it is finitely covered by an an abelian variety. 
\begin{lem}\label{lem: elliptic curve}
  If $E\subset \tilde X$ is an elliptic curve, then numerically $E$ is a positive multiple of $ A/ \mu$ or  $(\mu / \gamma)  B$.
\end{lem}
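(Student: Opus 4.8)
The plan is to exploit the fact that a bielliptic surface has Picard number $2$, so that the numerical class of $E$ is pinned down by its intersection numbers with the two fibre classes $A$ and $B$, and then to combine the hyperbolic shape of the intersection form with adjunction. First I would recall, as stated above, that $\mathrm{Num}(\tilde X)\otimes\IQ$ is spanned by $A$ and $B$ with $A^2=B^2=0$ and $A\cdot B=\gamma$; in particular the intersection form is hyperbolic of signature $(1,1)$, whose only isotropic lines are $\IQ A$ and $\IQ B$. I then write $E\equiv aA+bB$ with $a,b\in\IQ$.

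Next I would use that a bielliptic surface has torsion canonical class, hence $K_{\tilde X}\equiv 0$ numerically. Since $E$ is an elliptic curve (arithmetic genus $1$), adjunction gives
\[ 0 = 2p_a(E)-2 = E^2 + K_{\tilde X}\cdot E = E^2, \]
so $E$ is isotropic. On the other hand $E^2 = 2ab\,\gamma$, so $ab=0$, that is $a=0$ or $b=0$; thus $E$ is numerically proportional to $A$ or to $B$.

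Finally I would fix the sign and the normalisation. The classes $A$ and $B$ are nef, being fibre classes of the two elliptic fibrations of $\tilde X$, and $E$ is an irreducible effective curve, so $E\cdot A\ge 0$ and $E\cdot B\ge 0$; moreover $E$ is not numerically trivial. If $b=0$ then $E\cdot B = a\gamma\ge 0$ together with $E\not\equiv 0$ forces $a>0$, and symmetrically if $a=0$ then $b>0$. Rewriting in the integral basis $A/\mu,\ (\mu/\gamma)B$ of $\mathrm{Num}(\tilde X)$, and using that $E$ is an integral class lying on one of the two isotropic lines, I conclude that $E$ is a positive integral multiple of the primitive generator $A/\mu$ or of $(\mu/\gamma)B$, as claimed.

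The only delicate point is the isotropy step: it is precisely the combination of $K_{\tilde X}\equiv 0$ (so that adjunction yields the equality $E^2=0$ rather than merely an inequality) with the rank-two hyperbolic intersection form (so that $E^2=0$ forces $E$ onto one of the two null directions) that does all the work. Everything else reduces to bookkeeping of signs via nefness of the fibre classes.
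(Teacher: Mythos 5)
Your proposal is correct and follows essentially the same route as the paper: adjunction together with the numerical triviality of $K_{\tilde X}$ gives $E^2=0$, the hyperbolic rank-two intersection form on $\mathrm{Num}(\tilde X)$ then forces $E$ onto one of the two isotropic directions spanned by $A$ and $B$, and non-negativity of $E\cdot A$ and $E\cdot B$ fixes the sign. Your version merely spells out the bookkeeping (the coordinates $E\equiv aA+bB$ and the passage to the integral basis $A/\mu$, $(\mu/\gamma)B$) that the paper leaves implicit.
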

\begin{proof}
  By the adjunction Formula, we have $2g(E) - 2 = E (E + K_{\tilde X})$. The canonical divisor $K_{\tilde X}$ of each bielliptic surface is numerically  trivial. It follows that $E^2 = 0$, which  happens only when $E$ is a multiple of $ A/ \mu$ or $ (\mu / \gamma) B$; it  has to be positive, since $A.E\geq 0 $ and $B.E \geq 0$.
\end{proof}
\begin{center}
  \begin{table}\caption{Classification of bielliptic surfaces}\label{tab: bielliptic}
    \begin{center} \begin{tabular}{c|c|c|c}
                     Type of a bielliptic surface & G & $m_1, ..., m_s $& Basis of Num(S)\\
                     \hline
                     1& $\IZ_2$                     & 2, 2, 2, 2       & $A/2, B$\\
                     2 & $\IZ_2 \times \IZ_2$ & 2, 2, 2, 2      & $A/2, B/2$\\
                     3 & $\IZ_4$                     & 2, 4, 4         & $A/4, B$\\
                     4 & $\IZ_4 \times \IZ_2$ & 2, 4, 4          & $A/4, B/2$\\
                     5 & $\IZ_3$                     & 3, 3, 3         & $A/3, B$\\
                     6 & $\IZ_3 \times \IZ_3$ & 3, 3, 3         & $A/3, B/3$\\
                     7 & $\IZ_6$                     & 2, 3, 6         & $A/6, B$
                   \end{tabular}
                 \end{center}
               \end{table}
             \end{center}
             We get two fibrations on $\tilde X$, namely,
             \[ \begin{tikzcd}
                 {} & \tilde X \ar{dr}{\pi_2}  \ar{dl}[swap]{\pi_1}\\
                 A/G & & B/G
               \end{tikzcd}
             \]

             By Theorem \ref{thm: normalCase} we are looking for two (smooth) elliptic curves $E_1, E_2\subset \tilde X$ such that $E_1.E_2 = 1$.

             \begin{lem}
               Such a configuration exists if and only if $\tilde X$ is an odd bielliptic surface, that is, in cases 1,3,5,7 in Table \ref{tab: bielliptic}.

               Moreover, in this case up to automorphism $E_2 = B = \inverse \pi_1(0)$ and $E_1$ is the reduction of a multiple fibre of maximal multiplicity of $\pi_2$.
             \end{lem}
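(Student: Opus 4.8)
The plan is to reduce the existence of the configuration to a numerical question via Lemma~\ref{lem: elliptic curve} and then to decide it case by case using the two fibrations $\pi_1$ and $\pi_2$.

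First I would compute the intersection form in the basis $\alpha := A/\mu$, $\beta := (\mu/\gamma)B$ of $\mathrm{Num}(\tilde X)$. From $A^2 = B^2 = 0$ and $A\cdot B = \gamma$ one gets $\alpha^2 = \beta^2 = 0$ and $\alpha\cdot\beta = 1$. By Lemma~\ref{lem: elliptic curve} each of $E_1, E_2$ is numerically a positive multiple of $\alpha$ or of $\beta$. Since $E_1\cdot E_2 = 1 \neq 0$ they cannot both be proportional to the same generator, so after renaming $E_1 \equiv p\alpha$ and $E_2 \equiv q\beta$ with $p,q\geq 1$, and then $1 = E_1\cdot E_2 = pq$ forces $p = q = 1$. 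Hence the configuration exists if and only if both classes $\alpha$ and $\beta$ are represented by smooth elliptic curves.

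Next I would analyse the two directions separately. An irreducible curve $E$ whose class is a multiple of $\alpha$ satisfies $E\cdot A = 0$, so it is contained in a fibre of $\pi_2$; the fibres of $\pi_2$ are either smooth (isomorphic to $A$) or multiple fibres $m_i\,(A/H_i)$ with irreducible reduced smooth elliptic support $A/H_i$ of class $A/m_i$, so the class $\alpha = A/\mu$ is represented precisely by the reduction of a multiple fibre of maximal multiplicity $\mu = \max_i m_i$, and such a fibre exists in every line of Table~\ref{tab: bielliptic}. Symmetrically, a curve with class a multiple of $\beta$ satisfies $E\cdot B = 0$ and is therefore contained in a fibre of $\pi_1$. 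Here the key structural input is that $G$ acts on $A$ freely by translations (the homomorphism $G\to A$ is injective in every case), so $A\to A/G$ is \'etale and $\pi_1$ has no multiple fibres: every fibre is a smooth elliptic curve of class $B = (\gamma/\mu)\beta$. Consequently $\beta$ is represented by a curve if and only if $\gamma = \mu$.

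Finally I would read off the answer from the table: $\gamma = \mu$ holds exactly in the cyclic cases $1,3,5,7$ and fails, with $\gamma/\mu > 1$, in the remaining non-cyclic cases. In the odd cases I take $E_1$ to be the reduction of a multiple fibre of $\pi_2$ of maximal multiplicity $\mu$ and $E_2$ a fibre of $\pi_1$; the translations of $A/G$ lift to automorphisms of $\tilde X$ acting transitively on the fibres of $\pi_1$, so I may normalise $E_2 = B = \pi_1^{-1}(0)$, and then $E_1\cdot E_2 = \alpha\cdot\beta = 1$ as required. The point that needs the most care is the freeness of the $G$-action on $A$, since this is exactly what rules out multiple fibres of $\pi_1$ and thereby obstructs the class $\beta$ in the non-cyclic cases; the rest is bookkeeping with the intersection form and the table of multiplicities.
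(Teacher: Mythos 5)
Your proof is correct, and it takes a genuinely different route at the one non-trivial step. Both you and the paper begin identically: Lemma~\ref{lem: elliptic curve} plus the intersection computation force $E_1 \equiv A/\mu$ and $E_2 \equiv (\mu/\gamma)B$ numerically. At that point the paper simply cites \cite[Lemma 2.7]{MR3538518} to conclude that effectivity of the class $(\mu/\gamma)B$ forces $\mu/\gamma = 1$, and then asserts (with an implicit appeal to \cite{BHPV}) that the only curves in the two numerical classes are the stated ones. You instead prove both facts directly from the quotient construction $\tilde X = (A\times B)/G$: since $G$ acts on $A$ by translations, hence freely, the Albanese fibration $\pi_1$ has every fibre reduced and isomorphic to $B$, so the minimal effective class proportional to $\beta = (\mu/\gamma)B$ is the full fibre class $B = (\gamma/\mu)\beta$, which rules out the non-cyclic cases $2,4,6$ where $\gamma > \mu$; and any irreducible curve proportional to $A/\mu$ lies in a fibre of $\pi_2$, hence is the reduction of a multiple fibre of maximal multiplicity, which proves existence in cases $1,3,5,7$ and the ``moreover'' clause in one stroke (the translation argument normalising $E_2 = \pi_1^{-1}(0)$ is also carried out, where the paper is silent). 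What each approach buys: the paper's proof is shorter but outsources the key effectivity obstruction to an external lemma on hyperelliptic surfaces and leaves the identification of the curves implicit; yours is self-contained, makes the geometric mechanism visible (absence of multiple fibres of $\pi_1$ is exactly what obstructs the class $\beta$), and actually proves the full statement of the lemma rather than only its numerical part.
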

             \begin{proof}
               Assume there are two elliptic curves $E_1, E_2$ on $\tilde X$ such that $E_1 E_2 = 1$. By Lemma \ref{lem: elliptic curve} we can  write $E_1 = a  A/\mu$, $E_2 = b (\mu / \gamma) B$ for some positive integers $a,b$. Then
               \[1=E_1E_2 = \frac{ab}{\gamma} AB = ab,\]
               so we have $a=b=1$. By \cite[Lemma 2.7]{MR3538518} we get $\mu / \gamma = 1$, because numerically $\mu/\gamma B = E_2 $ is effective.  This happens exactly in the  cases $1, 3, 5, 7$ in the Table \ref{tab: bielliptic} as in \cite{BHPV} and the the only curves in these numerical classes are are the given ones.

             \end{proof}

             \begin{prop}
               There are three $1$-dimensional and one $2$-dimensional strata of normal  Gorenstein stable surfaces with $K_X^2 =1$ and $\chi(X) =2$ with minimal resolution birational to a bi-elliptic surface.
             \end{prop}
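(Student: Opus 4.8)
The plan is to realise each stratum as a finite cover of the moduli of the underlying odd bielliptic surface and then count moduli type by type. First I would record the reverse of the construction in Theorem \ref{thm: normalCase}: given an odd bielliptic surface $\tilde X_{\min}$ (one of the cases $1,3,5,7$ in Table \ref{tab: bielliptic}) together with the two elliptic curves $E_1,E_2$ produced by the previous lemma, I set $D_{\min}=E_1+E_2$. Then $D_{\min}$ is nef (being a sum of nef elliptic curves), $D_{\min}^2=E_1^2+2E_1E_2+E_2^2=2$, and $P=E_1\cap E_2$ is an ordinary double point of $D_{\min}$, so the hypotheses of Theorem \ref{thm: normalCase} hold. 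Blowing up $P$ separates $E_1,E_2$ into disjoint $(-1)$-elliptic strict transforms $\hat E_1,\hat E_2$, and contracting these yields $X$ with its two degree-one elliptic singularities. This assignment is inverse to passing from $X$ to the minimal model of its resolution, so the stratum of a fixed type is parametrised by pairs $(\tilde X_{\min},\{E_1,E_2\})$.

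The second step is to show that the configuration $\{E_1,E_2\}$ carries no continuous moduli. By the previous lemma we may take $E_2=\pi_1^{-1}(0)$ a fibre of $\pi_1\colon\tilde X_{\min}\to A/G$ and $E_1$ the reduction of a multiple fibre of $\pi_2$ of maximal multiplicity. The translation action of $A$ on the first factor descends to $\tilde X_{\min}$ and acts transitively on the fibres of $\pi_1$, so $E_2$ is unique up to automorphism; the multiple fibres of maximal multiplicity are finite in number, so $E_1$ ranges over a finite set. Hence the isomorphism class of $X$ is determined by that of $\tilde X_{\min}$ together with finite discrete data, and the dimension of the stratum equals the number of moduli of the corresponding bielliptic surface.

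Finally I would count these moduli. Writing $\tilde X_{\min}=(A\times B)/G$, the curve $A$ always contributes one modulus, since $j(A)$ varies freely (the torsion translation defining the $G$-action on $A$ being a discrete choice), while the action of $G$ on $B$ constrains $B$. For type $1$ the generator of $G=\IZ_2$ acts on $B$ as $y\mapsto -y$, so $B$ is arbitrary and contributes a second modulus, giving a $2$-dimensional stratum. For types $3,5,7$ the generator of $G$ acts on $B$ with order $4,3,6$, forcing $j(B)=1728$ (type $3$) or $j(B)=0$ (types $5,7$); thus $B$ is rigid, only $A$ varies, and each of these strata is $1$-dimensional. The four numerical types are distinguished by the group $G$, so the strata are disjoint, giving in total one $2$-dimensional and three $1$-dimensional strata.

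The main obstacle is the rigidity in the middle step: one must verify that, once $\tilde X_{\min}$ is fixed, every admissible pair $\{E_1,E_2\}$ is conjugate under $\Aut(\tilde X_{\min})$ to the standard one, so that no hidden continuous parameter enters. This rests on the description of the numerical classes of $E_1$ and $E_2$ in the previous lemma together with the transitivity of the translation action on the fibres of $\pi_1$; once this is in place, the dimension count reduces to the classical moduli of bielliptic surfaces as in \cite{BHPV}.
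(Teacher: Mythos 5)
Your proposal is correct and takes essentially the same approach as the paper: the paper's entire proof is your final counting step (the curve $A$ is arbitrary in all four odd types, while $B$ is arbitrary only in type $1$ and is forced to be $\IC/\IZ[i]$ or $\IC/\IZ[\exp(2\pi i/3)]$ in types $3$, $5$, $7$), with the rigidity of the pair $\{E_1,E_2\}$ up to automorphism delegated to the preceding lemma rather than reproved. Your first two steps simply make explicit the inverse construction and the reduction to moduli of bielliptic surfaces that the paper leaves implicit.
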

             \begin{proof}
               Note that in all cases in the table the elliptic curve $A$ is arbitrary. In case $1$, the curve $B$ can also be arbitrary, but in the other cases $B$ admits a larger group of automorphisms, hence is isomorphic to $\IC/\IZ[i]$ or $\IC/\IZ[\exp(2\pi i /3)]$. So the number of parameters is two in the first case and one in the other cases.
             \end{proof}

             \begin{exam}
               This example comes from \cite{FPR17} and again we use the notation from Section \ref{sect: bidouble}.  Let $D_1$ be a union of three general
               lines through $P \in D_0$ and $D_2$ be a union of three lines passing  through
               a general point $Q \in D_1$, see Figure~\ref{fig:Bielliptic}. Then $X$ has two elliptic singularities of degree
               $1$. Blowing up at $P$ and $Q$, the strict transform of the component of $D_1$ containing both $P$ and $Q$ is a $(-1)$-curve $E$. Contracting $E$ we get an induced bi-double cover $\phi\colon S \to \IP^1\times \IP^1$.  Then $\varphi_*\ko_S = \ko_{\IP^1 \times \IP^1} \oplus \sum_{i=1}^{3} L_i^{-1}$
               where
               \begin{align*}
                 L_1 &= \ko_{\IP^1 \times \IP^1}\left (\frac{2}{2}, \frac{4}{2}\right)\\
                 L_2 &= \ko_{\IP^1 \times \IP^1} (2, 0)\\
                 L_3 &= \ko_{\IP^1 \times \IP^1} (1,2)
               \end{align*}
               and 
               we conclude that $h^1(\ko_S) = 1$ and thus $S$ is a bielliptic surface.
              
               This  explicit example varies in a two-dimensional family, because the crossratio in the points where four lines meet can be arbitrary. Thus it  gives the family of surfaces of Type 1 in Table \ref{tab: bielliptic}.

               \begin{figure}
                 \centering
                 \begin{tikzpicture}[pfeil/.style = {->, every node/.style = { font = \scriptsize}},
                   extended line/.style={shorten >=-#1,shorten <=-#1},
                   scale=0.9]
                   \begin{scope}
                     \coordinate (p00) at (-1.5,1.5);
                     \foreach \ver in {0,1,2,3}
                     \foreach \hor in {0,1,2,3}
                     {
                       \coordinate (p\ver\hor) at ($(p00) +(0.75*\hor,-0.75*\ver)$);
                     };
                     \coordinate (pxx) at ($(p13) + (0,0.25)$);

                     \draw[C2col, dashed, extended line=0.5cm] (p00) -- (p03);
                     \draw[C1col, dashed, extended line=0.5cm] (p33) -- (p13);
                     \draw[extended line=0.5cm] (p00) -- (p30);
                     \draw[C1col, extended line=0.5cm] (p01) -- (p31);
                     \draw[C1col, extended line=0.5cm] (p02) -- (p32);
                     \draw[C2col, extended line=0.5cm] (p10) -- (p13);
                     \draw[C2col, extended line=0.5cm] (p20) -- (p23);
                     \draw[C2col, extended line=0.5cm] (p30) -- (p33);

                     \draw[C1col, looseness=1] (pxx) -- +(180+40:0.15); 
                     \draw[C1col, looseness=1] (pxx) -- +(40:0.1) to[out=40,in=180+60] +(60:1cm) node[yshift=0.5em]{$(-1)$};

                     \draw[pfeil] (-1.0,-1.5) to node[xshift=-2.8em]{blow up at P, Q} +(180+50:2.5);
                     \draw[pfeil] (0.5,-1.5) to node[xshift=3.6em]{contract -1 curve} +(-50:2.0);
                   \end{scope}

                   \begin{scope}[xshift = -3cm, yshift = -5cm]
                     \coordinate (P) at  (-1.5, -0.5 );
                     \coordinate (Q) at  ($(P) +(15:3.0)$);

                     \draw[C1col, extended line = 1cm] (P) -- (Q);
                     \draw[] (P) -- +(60:1.5);
                     \draw[] (P) -- +(180 + 60:0.75);
                     \foreach \angle in {90, 120}
                     {
                       \draw[C1col] (P) -- +(\angle:0.75);
                       \draw[C1col] (P) -- +(180 + \angle:0.75);
                     };
                     \foreach \angle in {60, 90, 120}
                     {
                       \draw[C2col] (Q) -- +(\angle:0.75);
                       \draw[C2col] (Q) -- +(180 + \angle:0.75);
                     };
                     \fill (P) circle (2pt) +(0.4cm,-0.1cm) node[]{$P$};
                     \fill (Q) circle (2pt) +(0.4cm,-0.1cm) node[]{$Q$};

                     \draw (P) +(75:1) node[]{$D_0$};
                     \draw (-2.5, -0.5) node[C1col]{$D_1$};
                     \draw (Q) +(-70:1) node[C2col]{$D_2$};
                   \end{scope}

                   \begin{scope}[xshift = 4cm, yshift = -5cm, minimum width = 10cm]
                     \coordinate (p00) at (-1.5,1.5);
                     \foreach \ver in {0,1,2,3}
                     \foreach \hor in {0,1,2,3}
                     \coordinate (p\ver\hor) at ($(p00) +(0.75*\hor,-0.75*\ver)$);
                     \coordinate (pxx) at ($(p13) + (0,0.25)$);

                     \draw[C2col, dashed, extended line=0.5cm] (p00) -- (p03);
                     \draw[C1col, dashed, extended line=0.5cm] (p33) -- (p03);
                     \draw[extended line=0.5cm] (p00) -- (p30);
                     \draw[C1col, extended line=0.5cm] (p01) -- (p31);
                     \draw[C1col, extended line=0.5cm] (p02) -- (p32);
                     \draw[C2col, extended line=0.5cm] (p10) -- (p13);
                     \draw[C2col, extended line=0.5cm] (p20) -- (p23);
                     \draw[C2col, extended line=0.5cm] (p30) -- (p33);

                     \fill[] (p03) circle(2pt);
                   \end{scope}
                 \end{tikzpicture}
                 \caption{Bielliptic case}
                 \label{fig:Bielliptic}
               \end{figure}
             \end{exam}

             \subsection{Minimal resolution of Kodaira dimension $-\infty$}
             \label{sect: kappa negative case}
              From Theorem \ref{thm: normalCase} we see that there is a finite number of possible cases. An example of a rational surface with a unique elliptic singularity of degree $4$ has been constructed as a bi-double cover \cite[Example $Z_4$]{FPR17}. An iterated double cover also gives a rational surface with a unique elliptic singularity of degree $2$, \cite[Example $Z_2^E$]{FPR17}. 
             
             A general understanding of these surfaces remains elusive for the moment. For example, can the minimal resolution be ruled over an elliptic curve?


\section{Strata of non-normal surfaces}
\label{ch: non normal surfaces}
To identify the non-normal Gorenstein stable surfaces with $K_X^2 = 1$ and $\chi(X) = 2$ we closely follow the strategy from \cite{FPR18}.
\subsection{Normalisation and gluing: starting point of the classification}\label{ssec: glue}

Let $X$ be a non-normal stable surface and $\pi\colon \bar X\to X$ its
normalisation. Recall that the non-normal locus $D\subset X$ and its preimage
$\bar D\subset \bar X$ are pure of codimension $1$, that is, curves. Since $X$ has
ordinary double points at the generic points of $D$ the map on normalisations
$\bar D^\nu\to D^\nu$ is the quotient by an involution $\tau$.
Koll\'ar's gluing principle says that $X$ can be uniquely reconstructed from
$(\bar X, \bar D, \tau\colon \bar D^\nu\to \bar D^\nu)$ via the following two
push-out squares:
\begin{equation}\label{diagr: pushout}
  \begin{tikzcd}
    \bar X \dar{\pi}\rar[hookleftarrow]{\bar\iota} & \bar D\dar{\pi} & \bar D^\nu \lar[swap]{\bar\nu}\dar{/\tau}
    \\
    X\rar[hookleftarrow]{\iota} &D &D^\nu\lar[swap]{\nu}
  \end{tikzcd}
\end{equation}

Applying this principle to non-normal Gorenstein stable surfaces, we deduce by \cite[Thm.~5.13]{Kollar2013} and \cite[Addendum in Sect.3.1.2]{FPR15a} that a triple $(\bar X, \bar D, \tau)$ corresponds to a Gorenstein stable surface with $K_X^2 = 1$ and $\chi(X) = 2$ if and only if the following four conditions are satisfied:
\begin{description}
\item[lc pair condition] $(\bar X, \bar D)$ is an lc pair, such that $K_{\bar X}+\bar D$ is an ample Cartier divisor.
\item[$K_X^2$-condition] $(K_{\bar X}+\bar D)^2=1$.
\item[Gorenstein-gluing condition] $\tau\colon \bar D^\nu\to \bar D^\nu$ is an involution that restricts to a fixed-point free involution on the preimages of the nodes of $\bar D$.
\item[$\chi$-condition]  The holomorphic Euler-characteristic of the non-normal locus $D$ is  \[\chi(D) = 2-\chi(\bar X)+\chi(\bar D).\]
\end{description}

In \cite{FPR15a} Gorenstein log canonical pairs   $(\bar{X}, \bar{D})$ with $(K_{\bar X}+\bar D)^2=1$ were classified:
\begin{enumerate}\label{list}
\item[($P$)] $\bar{X} = \IP^2$ and $\bar{D}$ is a nodal quartic. Here $p_a(\bar{D}) = 3$ and $K_{\bar{X}} + \bar{D} = \ko_{\IP^2}(1).$

\item[($dP$)] $\bar{X}$ is a (possibly singular) Del Pezzo surface of degree 1, namely $\bar{X}$ has at most canonical singularities, $-K_{\bar{X}}$ is ample and $K^2_{\bar{X}} = 1$. The curve $\bar{D}$ belong to the system $|-2K_{\bar{X}} |$, hence $K_{\bar{X}} + \bar{D} = -K_{\bar{X}}$ and $p_a(\bar{D}) = 2$.

\item[($E_{-}$)] Let $E$ be an elliptic curve and let $a\colon \tilde{X} \to E$
  be a geometrically ruled surface that contains an irreducible section $C_0$
  with $C_0^2 = -1$. Namely, $\tilde{X} = \IP(\ko_E + \ko_E(-x))$, where $x \in
  E$ is a point and $C_0$ is the only one curve on the system
  $|\ko_{\bar{X}}(1)|$. Set $F = a^{-1}(x):$ the normal surface $\bar{X}$ is
  obtained from $\tilde{X}$ by contracting $C_0$ to an elliptic Gorenstein
  singularity of degree 1 and $\bar{D}$ is the image of a curve $\bar{D}_0 \in
  |c(C_0 + F)|$ disjoint from $C_0$, so $p_a(\bar{D}) = 2$. The line bundle
  $K_{\bar{X}} + \bar{D}$ pulls back to $C_0 + F$ on $\tilde{X}$. 

  In fact, the anti-canonical divisor $-K_{\bar X}$ is ample, so that one can consider $\bar X$ as a singular del Pezzo surface, i.e., a degeneration of the case $(dP)$
\item[($E_{+}$)] $\bar{X} = S^2E$, where $E$ is an elliptic curve. Let $a\colon \bar{X} \to E$ be the Albanese map, which is induced by the addition map $E \times E \to E$, denote by $F$ the class of a fibre of $a$ and by $C_0$ the image in $\bar{X}$ of the curve $\{0\} \times E + E \times \{0\}$, where $0 \in E$ is the origin, so that $C_0F = C_0^2 = 1$. Then $\bar{D}$ is a divisor numerically equivalent to $3C_0 - F, p_a(\bar D) = 2$ and $K_{\bar{X}} + \bar{D}$ is numerically equivalent to $C_0$.
\end{enumerate}

\subsection{Case \caseP{}}
\label{sect: caseP}

It turns out that the classification of this case entails a detailed study of some plane quartics. For future reference we begin with an elementary lemma.
\begin{lem}\label{lem: reducible quartics}
  Let $\bar D$ be a nodal plane quartic.
  \begin{enumerate}
  \item If $\bar D$ has at most two nodes, then it is irreducible.
  \item If $\bar D$ has three nodes, then it is reducible if and only if the nodes are colinear if and only if it is the union of a smooth cubic and a general line.
  \item If $\bar D$ has   four nodes, then it is the union of two smooth conics or the union of a nodal cubic and a line.
  \item If $\bar D$ has five nodes, then it is the union of a smooth conic and two general lines.
  \item If $\bar D$ has at least six nodes, then it has exactly six nodes and is the union of four lines in general position.
  \end{enumerate}
\end{lem}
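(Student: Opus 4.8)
The plan is to deduce the entire classification from a single node count expressed through the irreducible components. Since a non-reduced quartic is never nodal, I may write $\bar D = C_1 \cup \dots \cup C_k$ as the union of its distinct irreducible components; set $d_i = \deg C_i$ and let $g_i$ be the geometric genus of $C_i$. Because $\bar D$ is nodal its total $\delta$-invariant equals the number $\delta$ of its nodes, so the structure sequence $0 \to \ko_{\bar D} \to \nu_*\ko_{\bar D'} \to \mathcal T \to 0$ of the normalisation $\nu\colon \bar D' \to \bar D$, together with $p_a(\bar D) = 3$, yields
\[
  \delta \;=\; p_a(\bar D) - 1 + k - \sum_{i=1}^k g_i \;=\; 2 + k - \sum_{i=1}^k g_i .
\]
First I would establish this identity. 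This is the step where the nodal hypothesis does the real work: it guarantees both that $\delta$ counts nodes and that the components meet pairwise transversally with no three passing through a common point and no component acquiring a cusp, since any such degeneration would produce a non-nodal singularity.

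Next I would run through the finitely many ways to write $4 = \sum d_i$ together with the admissible genera $0 \le g_i \le \binom{d_i-1}{2}$, tabulating $\delta = 2 + k - \sum g_i$:
\begin{center}
\begin{tabular}{cccl}
 components & $\sum_i g_i$ & $\delta$ & configuration\\
 \hline
 $(4)$ & $3,2,1,0$ & $0,1,2,3$ & irreducible\\
 $(3,1)$ & $1$ & $3$ & smooth cubic $+$ line\\
 $(3,1)$ & $0$ & $4$ & nodal cubic $+$ line\\
 $(2,2)$ & $0$ & $4$ & two smooth conics\\
 $(2,1,1)$ & $0$ & $5$ & conic $+$ two lines\\
 $(1,1,1,1)$ & $0$ & $6$ & four lines
\end{tabular}
\end{center}
Reading off this table settles (1), (3), (4) and (5) at once: the only types with $\delta \le 2$ are irreducible; the only types with $\delta = 4$ are a nodal cubic plus a line or two smooth conics; $\delta = 5$ forces a conic plus two lines; and $\delta \ge 6$ is possible only for four (necessarily general-position) lines, whence $\delta = 6$ exactly. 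Along the way I would note that the genus constraints force, for instance, the genus-$0$ cubic in the $(3,1)$ row to be a \emph{nodal} rather than cuspidal cubic, and the conics to be smooth, precisely because $\bar D$ is nodal.

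The only assertion not immediate from the table is the geometric characterisation in (2). For $\delta = 3$ the table leaves exactly two possibilities, an irreducible quartic or a smooth cubic together with a line. In the reducible case the three nodes are the three transverse points of $C \cap \ell$, hence lie on $\ell$ and are colinear. For the converse I would use B\'ezout: if the three nodes lay on a line $\ell$ not contained in $\bar D$, then $\bar D \cdot \ell = 4$, yet at each node $p$ one has $(\bar D \cdot \ell)_p \ge \mathrm{mult}_p \bar D = 2$, forcing $\bar D \cdot \ell \ge 6$, a contradiction; thus $\ell$ is a component and $\bar D$ is reducible, so by the table it is a smooth cubic plus $\ell$. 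I expect this B\'ezout step, and more generally the care needed to check that the nodal hypothesis really pins down each configuration up to general position, to be the only genuinely geometric points; the rest is the bookkeeping encoded in the displayed formula.
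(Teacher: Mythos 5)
Your proof is correct, but it proceeds differently from the paper's. The paper disposes of the lemma in a few lines by pure B\'ezout-type arguments: it notes that three colinear nodes force the line through them to be a component, and that four nodes force a conic component, because in the pencil of conics through the four nodes some member meets $\bar D$ with multiplicity at least $9 > 8 = 2\cdot 4$ and must therefore split off; the remaining assertions are left as ``elementary by B\'ezout.'' You instead organise everything around the numerical identity $\delta = p_a(\bar D) - 1 + k - \sum_i g_i = 2 + k - \sum_i g_i$, obtained from the normalisation sequence, and then enumerate the degree partitions of $4$ with their admissible geometric genera. This makes parts (1), (3), (4) and (5) pure bookkeeping, with the nodal hypothesis entering only to guarantee that $\delta$ counts nodes and that all degenerate configurations (tangencies, concurrent components, cusps) are excluded; your table is accurate, and the one genuinely geometric statement, the colinearity equivalence in (2), you handle by exactly the same B\'ezout argument the paper hints at ($\bar D\cdot \ell \ge 6 > 4$ at three colinear double points). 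The trade-off: the paper's route explains \emph{why} many nodes force low-degree components through them (a statement about the position of the nodes), while your route gives a complete and self-contained classification of all component types and node counts in one stroke, at the cost of needing the $\delta$-invariant formalism rather than only intersection numbers. Both are sound; yours is arguably the more systematic write-up of what the paper calls ``elementary.''
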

\begin{proof}
  All statements are elementary by B\'ezout. Let us only point out that three colinear nodes force the line through the nodes to be contained in $\bar D$. If there are four nodes, then in the pencil of conics through the nodes there is at least one that intersects $\bar D$ with multiplicity larger than $8$, and is thus contained in $\bar D$.
\end{proof}

Let us  set up the notation for the rest of the section: we consider a Gorenstein stable surface $X$ with $K_X^2 = 1$ and $\chi(\ko_X)=2$. With notation as in \eqref{diagr: pushout} the normalisation $\bar X=\IP^2$ and $\bar D$ is a nodal quartic.
Denote by
\begin{itemize}
\item $\mu_1$, the number of degenerate cusps in $X$.
\item $\rho$, the number of ramification points of the map ${\bar D}^{\nu} \to D^{\nu}$
\item $\bar \mu $, the number of nodes of $\bar D$.
\end{itemize}

\begin{rem}\label{rem: pushout equivalence relation}
  For further use, note that the points of $D^\nu$ correspond to equivalence classes of points on $\bar D^\nu$ with respect to the relation generated  by $x\sim y $ if $\bar \nu(x) =\bar\nu (y)$ or $\tau(x) = y$.
  By the classification of  Gorenstein semi-log-canonical singularities (see the
  proof of Lemma 3.5 in \cite{FPR15a}) nodes of $\bar D$ map to degenerate cusp
  singularities of $X$ and preimages of degenerate cusp singularities are nodes
  of $\bar D$. Thus, the number $\mu_1$ of degenerate cusps in $X$ equals the
  number of equivalence classes of preimages of nodes in $\bar D^\nu$ under the
  above relation.
\end{rem}

In our situation,  by the $\chi$-condition and  \cite[Lem.~3.5]{FPR15a}, we have the equality
\[-1 =\chi(D) = \frac 12\left( \chi({\bar D})-\bar \mu\right)+\frac\rho 4+\mu_1,\]
which gives
\begin{equation}\label{eq: nodes on barD}
  \bar \mu = \mu_1 = \rho  = 0 \quad\text{ or }\quad \bar\mu = \frac\rho 2+2\mu_1\geq 2.
\end{equation}

Since a plane quartic can have at most $6$ nodes, in total we get $2\leq \bar\mu\leq6$ unless $\bar D$ is smooth.

\begin{prop}\label{prop: case P exclude etale maps}
  Let $C$ be a nodal curve of arithmetic genus $3$ with a fixed-point free involution $\tau$, or equivalently, an \'etale map $\pi \colon C \to D$ to a nodal curve of arithmetic genus $2$.
  Then the image of the canonical map $C \overset{|K_C|}{\dashrightarrow} \IP^2$ is contained in a conic.

  In particular, $C$ is not a plane curve.
\end{prop}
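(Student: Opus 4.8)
The plan is to exploit the \'etale double cover $\pi\colon C \to D$ directly on the level of the dualizing sheaf. Since $\pi$ is \'etale there is no ramification, so $\omega_C \isom \pi^*\omega_D$, and the cover is encoded by a $2$-torsion line bundle $\eta$ on $D$ with $\pi_*\ko_C \isom \ko_D \oplus \eta$. I may assume $C$ is connected, so that $\eta$ is non-trivial and $h^0(\omega_C) = p_a(C) = 3$. Pushing forward and using the projection formula, I would first split the space of canonical sections into $\tau$-eigenspaces
\[ H^0(C,\omega_C) \isom H^0(D,\omega_D) \oplus H^0(D, \omega_D\tensor\eta), \]
where the invariant part $\pi^*H^0(\omega_D)$ is $2$-dimensional and the anti-invariant part is $1$-dimensional: by Riemann--Roch on the genus $2$ curve $D$, using that $\eta$ is non-trivial so that $h^0(\eta)=0$, one gets $h^0(\omega_D\tensor\eta)=1$. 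Choosing a basis $s_0,s_1$ of the invariant part and $t$ of the anti-invariant part produces canonical coordinates $[s_0:s_1:t]$ on $\IP^2$.

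The key observation is that $t$ is $\tau$-anti-invariant, so $t^2$ is $\tau$-invariant and hence descends: as a section of $\omega_C^{\tensor 2}\isom\pi^*\omega_D^{\tensor 2}$ it lies in $\pi^*H^0(D,\omega_D^{\tensor 2})$. The products $s_0^2,s_0s_1,s_1^2$ lie in the same subspace, being pullbacks of $\bar s_0^2,\bar s_0\bar s_1,\bar s_1^2$, where $s_i=\pi^*\bar s_i$. Now $H^0(D,\omega_D^{\tensor 2})$ is $3$-dimensional, and I would check that the multiplication map $S^2H^0(\omega_D)\to H^0(\omega_D^{\tensor 2})$ is an isomorphism: a non-trivial kernel element would be a binary quadratic relation satisfied by the non-constant rational function $\bar s_0/\bar s_1$, forcing it to be constant, which is absurd. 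Hence $\bar s_0^2,\bar s_0\bar s_1,\bar s_1^2$ form a basis, and $t^2$ is a linear combination of them. Pulling back yields a relation $t^2=\alpha s_0^2+\beta s_0s_1+\gamma s_1^2$, that is, the canonical image of $C$ lies on the conic $\{t^2-\alpha s_0^2-\beta s_0s_1-\gamma s_1^2=0\}\subset\IP^2$.

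For the final assertion I would recall that a smooth plane quartic is canonically embedded of degree $4$, so by B\'ezout it cannot lie on a conic; since the canonical image of $C$ always lies on a conic, $C$ cannot be a plane quartic, hence (being of genus $3$) not a plane curve. The main obstacle I anticipate is not the eigenspace bookkeeping but rather making the genus $2$ input rigorous for the possibly singular and reducible nodal curve $D$: one must justify that $\omega_D$ behaves as in the smooth case, namely $h^0(\omega_D)=2$ with non-proportional sections (so the associated map is non-constant and the multiplication map is injective), which should follow from $D$ being Gorenstein with $\omega_D$ a line bundle of degree $2$. One also needs to ensure the cover is genuinely connected, so that $\eta$ is non-trivial and the target of the canonical map is really $\IP^2$; in the disconnected case $h^0(\omega_C)=4$ and $C$ fails to be a plane curve for trivial reasons.
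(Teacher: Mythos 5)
Your proposal is, in substance, the paper's own proof: both split $H^0(\omega_C)$ into $\tau$-eigenspaces via the projection formula, $H^0(\omega_C)\isom H^0(\omega_D)\oplus H^0(\omega_D\tensor L)$ with $L$ the $2$-torsion bundle (pieces of dimension $2$ and $1$), and both obtain the conic from the fact that the square of the anti-invariant generator is $\tau$-invariant, hence lies in the pullback of $H^0(\omega_D^{\tensor 2})$. The paper is simply terser: it asserts $z^2\in H^0(2K_D)=\langle x^2,xy,y^2\rangle$ without comment, which is exactly the bijectivity of the multiplication map $S^2H^0(\omega_D)\to H^0(\omega_D^{\tensor 2})$ that you attempt to prove.

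There is, however, one step that fails as you state it, and your anticipated repair does not work: injectivity of $S^2H^0(\omega_D)\to H^0(\omega_D^{\tensor 2})$ is \emph{false} for reducible nodal $D$ of arithmetic genus $2$, and it does not follow from $D$ being Gorenstein with $\omega_D$ a line bundle of degree $2$. Take $D=D_1\cup D_2$ two elliptic curves meeting at one node: by the residue theorem a section of $\omega_D$ can have no pole along either branch, so $H^0(\omega_D)=\langle s_0,s_1\rangle$ with $s_0=(\eta_1,0)$ and $s_1=(0,\eta_2)$ each supported on a single component; these are non-proportional, yet $s_0s_1=0$. (Your rational-function argument silently assumes $D$ integral; here the map $(s_0:s_1)$ contracts each component to a point.) This situation does occur under the hypotheses of the Proposition: gluing the connected \'etale double covers given by nontrivial $2$-torsion on each $D_i$ produces a connected nodal $C$ of arithmetic genus $3$ with a free involution. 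Fortunately the gap closes itself: if the multiplication map has a nonzero kernel element $q(s_0,s_1)$, then $q$ is itself a quadratic relation among the canonical coordinates (one not involving $t$), so the canonical image again lies on a conic --- in the example, the image of one component lies on $\{s_1=0\}$ and of the other on $\{s_0=0\}$, hence the whole image lies on the degenerate conic $\{s_0s_1=0\}$. So the correct argument is a dichotomy: either the multiplication map is injective, hence an isomorphism of $3$-dimensional spaces and $t^2$ is a combination of $s_0^2,s_0s_1,s_1^2$; or it is not, and a kernel element is the conic. Note that the paper's proof elides exactly the same point (its equality $H^0(2K_D)=\langle x^2,xy,y^2\rangle$ fails in the example above), although in both places where the paper actually applies the Proposition the quotient curve $D$ is integral, so nothing is at stake there. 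A last cosmetic remark: in the final step do not restrict to \emph{smooth} plane quartics, since $C$ is nodal; adjunction for Gorenstein plane curves gives $\omega_C\isom\ko_C(1)$ for any plane quartic, so the canonical map is the inclusion $C\subset\IP^2$ and B\'ezout (degree $4>2$) applies as before.
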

\begin{proof}
  By assumption  there is a torsion divisor $L$ on $D$ such that $2L = 0 $ defining the double cover and the projection formula gives a splitting
  \[ H^0(K_{\bar D}) = H^0(K_D) \oplus H^0(K_D + L).\]
  Writing $H^0(K_D)  = \langle x,y \rangle$ and $H^0(K_D + L)= \langle z\rangle$, we see that $z^2\in H^0(2(K_D+L)) = H^0(2K_D) = \langle x^2, xy, y^2\rangle$, so there is a quadratic relation between the section defining the canonical map.

  To conclude that $C$ is not a plane curve note that a plane curve of arithmetic genus $3$ is a plane quartic and hence canonically embedded by the adjunction formula.
\end{proof}

\begin{cor}\label{cor: case P at least 3 nodes}
  In the above situation, the plane quartic $\bar D$ has at least three nodes.
\end{cor}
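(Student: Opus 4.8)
The plan is to rule out the two small values $\bar\mu = 0$ and $\bar\mu = 2$ left open by \eqref{eq: nodes on barD}, in each case by producing a fixed-point-free involution on the arithmetic-genus-$3$ nodal curve $\bar D$ and invoking Proposition \ref{prop: case P exclude etale maps} to contradict the fact that $\bar D$ is a plane quartic.

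First I would pin down the numerics. By \eqref{eq: nodes on barD} we have $\bar\mu = 0$ or $\bar\mu = \rho/2 + 2\mu_1 \geq 2$; in particular $\bar\mu \neq 1$. Moreover, by Remark \ref{rem: pushout equivalence relation} each node of $\bar D$ contributes preimages in $\bar D^\nu$, so as soon as $\bar\mu \geq 1$ there is at least one equivalence class of node-preimages and hence $\mu_1 \geq 1$. Feeding this into $\bar\mu = \rho/2 + 2\mu_1$ shows that the case $\bar\mu = 2$ forces $\rho = 0$ and $\mu_1 = 1$. Thus it suffices to treat $\bar\mu = 0$ (with $\rho = \mu_1 = 0$) and $\bar\mu = 2$ (with $\rho = 0$, $\mu_1 = 1$); in both cases $\rho = 0$ means that $\tau$ is fixed-point-free on all of $\bar D^\nu$.

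If $\bar\mu = 0$ then $\bar D = \bar D^\nu$ is a smooth plane quartic and $\tau$ is itself a fixed-point-free involution on it, so $\bar D \to \bar D/\tau$ is an \'etale double cover onto a curve of arithmetic genus $2$. Proposition \ref{prop: case P exclude etale maps} then asserts that $\bar D$ is not a plane curve, which is absurd.

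The real work is the case $\bar\mu = 2$, where I must show that $\tau$ descends to a fixed-point-free involution of the nodal curve $\bar D$ itself. Here $\bar D$ is irreducible by Lemma \ref{lem: reducible quartics}, with two nodes whose four preimages $p_1, p_1', p_2, p_2'$ in $\bar D^\nu$ are permuted without fixed points by $\tau$. Analysing the three fixed-point-free involutions of this four-element set against the equivalence relation of Remark \ref{rem: pushout equivalence relation}, the involution $(p_1\,p_1')(p_2\,p_2')$ preserving each node-fibre produces two distinct classes of node-preimages, i.e. $\mu_1 = 2$, which is excluded; hence $\tau$ must interchange the two node-fibres. Consequently $\tau$ is compatible with the gluing $p_i \leftrightarrow p_i'$ and descends to an involution $\bar\tau$ of $\bar D$ that swaps the two nodes and has no fixed point, so that $\bar D \to \bar D/\bar\tau$ is again an \'etale double cover onto a genus-$2$ curve, and Proposition \ref{prop: case P exclude etale maps} yields the desired contradiction. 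The delicate point, and the only genuine obstacle, is precisely this combinatorial bookkeeping showing that $\mu_1 = 1$ forces $\tau$ to swap the node-fibres and therefore to descend to $\bar D$.
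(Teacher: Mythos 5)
Your proof is correct and follows essentially the same route as the paper: reduce via \eqref{eq: nodes on barD} and Remark \ref{rem: pushout equivalence relation} to the cases $\bar\mu = 0$ and $\bar\mu = 2$ with $\rho = 0$, show in the two-node case that $\mu_1 = 1$ forces $\tau$ to swap the node-fibres and hence descend to a fixed-point-free involution on $\bar D$, and conclude by Proposition \ref{prop: case P exclude etale maps}. Your explicit enumeration of the three fixed-point-free involutions of the four node-preimages just spells out what the paper compresses into ``up to renaming we have $\tau(P_i)=Q_i$''.
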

\begin{proof}
  Let $\bar D$ be a nodal quartic with $\bar \mu \leq 2$ nodes. By \eqref{eq: nodes on barD} the case $\bar\mu = 1$ need not be considered.

  We have to show, that an involution $\tau$ on $\bar D^\nu$ satisfying the Gorenstein gluing condition and yielding $\chi(D) = -1$ cannot exist. By Proposition \ref{prop: case P exclude etale maps} it is enough to show that such an involution would descend to a fixed-point-free involution on $\bar D$ itself.

  This is clear if $\bar D$ is smooth as in this case $\bar D = \bar D^\nu$.

  So we are left with $\bar \mu = 2$, in which case $\bar D^\nu$ is an elliptic curve with four marked points $P_1, P_2, Q_1, Q_2$  mapping to the nodes $P$ and $Q$ of $\bar D$.
  Assuming a suitable involution exists,  by \eqref{eq: nodes on barD} the involution $\tau$ cannot have fixed points on $\bar D^\nu$ and all points $P_i, Q_i$ map to a unique degenerate cusp.

  As explained in Remark \ref{rem: pushout equivalence relation} this means that, up to renaming we have $\tau(P_i) = Q_i$. This is exactly the condition for $\tau$ to descend to a fixed-point-free involution on $\bar D$. By Proposition \ref{prop: case P exclude etale maps} this is impossible for a plane curve.
\end{proof}

We  now show explicitly that examples with three nodes exist.
\begin{exam}[$\bar D$ irreducible with three nodes]\label{exam: case P 3 nodes}
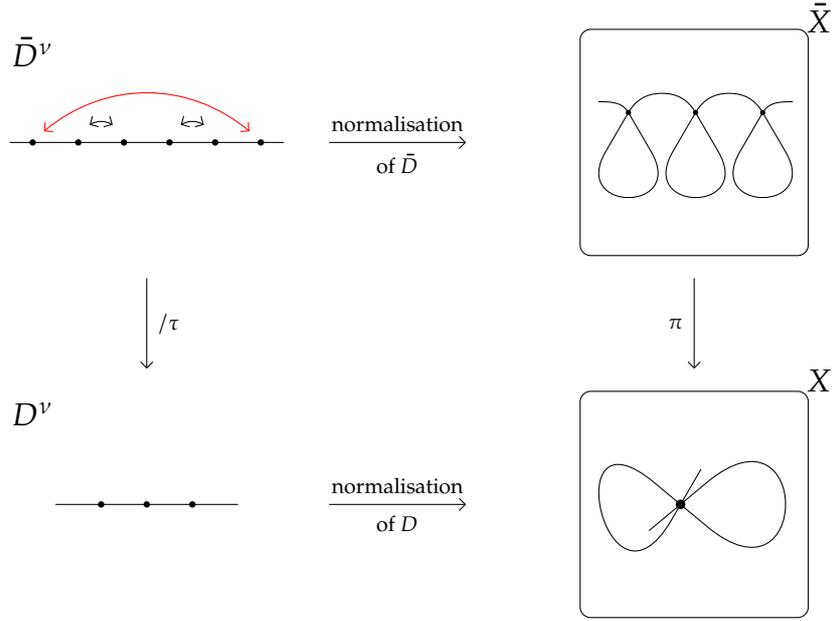
\begin{figure}
  \centering
  \begin{tikzpicture} [pfeil/.style = {->, every node/.style = { font
        =\scriptsize}}, scale = .6]
    \begin{scope}[xshift = 12cm, yshift = 8cm]
      \node at (2.75, 2.75) {$\bar{X}$};

      \draw[rounded corners] (-2.5, 2.5) rectangle (2.5, -2.5);

      \path (-2.1,0.9) coordinate(X);
      \draw (X) to[out=0, in=120,looseness=1]
      ++(-20:0.7) node[circle,fill=black, inner sep=0pt, minimum size=2pt](1){}
      -- +(30-90:1) to[out=30-90, in=270-30,looseness=4] ++(270-30:1) --
      ++(270-30:-1) to[out=60,looseness=1, in= 180] ++(30:0.85) to[out=0,
      in=120,looseness=1] ++(-30:0.85) node[circle,fill=black, inner sep=0pt,
      minimum size=2pt](2){} -- +(30-90:1) to[out=30-90, in=270-30,looseness=4]
      ++(270-30:1) -- ++(270-30:-1) to[out=60,looseness=1, in= 180] ++(30:0.85)
      to[out=0, in=120,looseness=1] ++(-30:0.85) node[circle,fill=black, inner
      sep=0pt, minimum size=2pt](3){} -- +(30-90:1) to[out=30-90,
      in=270-30,looseness=4] ++(270-30:1) -- ++(270-30:-1)
      to[out=60,looseness=1, in= 180] ++(20:0.7);

      \draw[pfeil] (0, -3) to node[left] {$\pi$} node[right] {} ++(0,-2);
      
    \end{scope}
    \begin{scope}[xshift = 0cm, yshift = 8cm]
      \node at (-2.5,2) {$\bar D^\nu$};

      \draw (-3,0) -- (3,0);
      \fill \foreach \x in {1,...,6}
      {(-3.5+\x,0) circle(2pt) node[above](\x) {}};
      \draw [<->] (2) to [out=30,in=150] (3);
      \draw [<->] (4) to [out=30,in=150] (5);
      \draw [<->,red] (1.east) to [out=40,in=140] (6.west);

      \draw[pfeil] (4,0) to node[below] { of $\bar D$} node[above] {
        normalisation} ++(3,0);

      \draw[pfeil] (0, -3) to node[left] {}
      node[right] { $/\tau$} ++(0,-2);
    \end{scope}
    \begin{scope}
      \node at (-2.5,2) {$ D^\nu$};

      \draw (-2, 0) -- (2,0);
      \fill \foreach \s in {1,...,3} { (-2 +\s,0) circle(2pt) node[above]{}};
   
      \draw[pfeil] (4,0) to node[below] { of $ D$} node[above] { normalisation} ++(3,0);

    \end{scope}
    \begin{scope}
      [xshift = 12cm, yshift =0cm, looseness=8]
      \node at (2.75, 2.75) {$X$};

      \draw[rounded corners] (-2.5, 2.5) rectangle
      (2.5, -2.5);

      \path (-0.3,0) coordinate(X);
      \draw {(X) -- +(40:0.75) to[out=40, in=-40] +(-40:0.75) -- (X)}
      {(X) -- +(180+40:0.9) }
      {(X) -- +(180-40:0.75) to [out=180-40, in=180+60] +(180+60:0.4) -- (X)}
      {(X) -- +(60:0.9) };
      \fill (X) circle (3pt);
    \end{scope}
  \end{tikzpicture}
  \caption{Case \caseP{}, $\bar D$ has three nodes}
  \label{fig: Case ($P_2$)}
\end{figure}
Assume $\bar D$ is an irreducible plane quartic with three nodes, which we may assume to be at $P_1=(1:0:0), P_2=(0:1:0)$ and $ P_3= (0:0:1)$. 
Its normalisation is $\bar D^\nu \isom \IP^1$ with six marked points mapping to the nodes of $\bar D$. 
Assume that there is an involution $\tau$ restricting to a fixed-point-free involution in the marked points. Then one can choose coordinates $(u:v)$ such that $\tau (u:v) = (av:u)$ and the six points are
\begin{align*}
 (0:1), && \tau(0:1) = (1:0),\\
 (1:1), && \tau(1:1) = (a:1),\\
 (b:1), && \tau (b:1) = (a:b),
\end{align*}
for some $a, b\in \IC\setminus\{0, 1\}$ with $a\neq b$. If we denote the homogeneous coordinates of the projective plane with $(x:y:z)$ then each coordinate function  vanishes at four of the six points exactly once, thus determining which point maps to which node. 

In order for the triple $(\IP^1, \bar D, \tau)$ to satisfy the $\chi$-condition, we infer from \eqref{eq: nodes on barD} that there is a uniqe degenerate cusp, that is, all six points are in the same class with respect to the equivalence relation explained in \ref{rem: pushout equivalence relation}.
Thus up to permuting and rescaling the coordinates the map $\bar\nu \colon \bar D^\nu \to \IP^2$ should be given  by 
\begin{align*}x &= uv(u-v)(u-bv),\\ y& = u(u-v)(u-av)(bu-av),\\ z& = v(u-av)(u-bv)(bu-av),
 \end{align*}
such that
\[\inverse{\bar\nu}(P_1) = \{(a:1), (a:b)\}, \inverse{\bar\nu}(P_2) = \{(1:0), (b:1)\},\text{ and }\inverse{\bar\nu}(P_3) = \{(0:1), (1:1)\}.\]
Therefore, a curve admitting such an involution on the normalisation exists and the equation of the image can be computed using Macaulay2 to be
\begin{align*}
f_{a,b}= (-a b^{3}+b^{4}+a^{2} b-a b^{2}) x^{2} y^{2} &+(a^{2} b^{3}-a^{3} b-a b^{3}-a^{3}+3 a^{2} b-a b^{2}) x^{2} y z\\
&+(a b^{2}-2 b^{3}-a^{2}+ab+b^{2}) x y^{2} z\\
&+(a^{4}-a^{3} b-a^{3}+a^{2} b) x^{2} z^{2}\\
&+(2 a^{2} b-a b^{2}-a^{2}-a b+b^{2}) x y z^{2}\\
&+(b^{2}-b) y^{2} z^{2}.
\end{align*}

  By construction, the triple $(\IP^2, \bar D, \tau)$ defines a Gorenstein stable surface with $K_X^2 = 1$ and $\chi(\ko_X) = 2$, depending on the parameters $a,b$.
\end{exam}

To sum up what we have done so far we state:
\begin{prop}
  Let $X$ be a Gorenstein stable surface with $K_X^2 = 1$ and $\chi(X) = 2$. If the normalisation $\bar X = \IP^2$ and $\bar D\subset \bar X$ is irreducible, then $X$ arises as in Example \ref{exam: case P 3 nodes}.
\end{prop}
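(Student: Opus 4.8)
The plan is to combine the classification of nodal quartics with the numerical constraints already derived, reduce $\bar D$ to an irreducible three-nodal quartic, and then normalise all the gluing data by projective transformations until it coincides with the building data of Example \ref{exam: case P 3 nodes}. By Corollary \ref{cor: case P at least 3 nodes} the quartic $\bar D$ has at least three nodes. On the other hand, Lemma \ref{lem: reducible quartics} shows that a nodal quartic with four, five or six nodes is reducible, and that a three-nodal quartic with colinear nodes is reducible as well. Since $\bar D$ is assumed irreducible, it must therefore have exactly three nodes $P_1,P_2,P_3$, which are not colinear. As $p_a(\bar D)=3$ and three nodes drop the geometric genus by three, the normalisation $\bar D^\nu$ is a smooth rational curve $\isom\IP^1$ carrying six marked points, two over each node.

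Next I would read the gluing data off the numerics. Plugging $\bar\mu=3$ into \eqref{eq: nodes on barD} forces $3=\frac{\rho}{2}+2\mu_1$; since every node of $\bar D$ maps to a degenerate cusp of $X$ by Remark \ref{rem: pushout equivalence relation}, we have $\mu_1\geq 1$, so the only solution is $\mu_1=1$, $\rho=2$. Thus $X$ has a single degenerate cusp, which by Remark \ref{rem: pushout equivalence relation} means that all six marked points lie in one class of the relation generated by the fibres of $\bar\nu$ and by $\tau$; equivalently, the two induced perfect matchings on the six points --- pairing by common image node and pairing by $\tau$ --- together form a single hexagon. The value $\rho=2$ records that $\tau$ has exactly two fixed points on $\IP^1$, and the Gorenstein-gluing condition guarantees that neither of them is a marked point.

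Then I would normalise. Using $\mathrm{PGL}_3$ I move the three nodes to $(1:0:0),(0:1:0),(0:0:1)$. On $\bar D^\nu\isom\IP^1$ I use $\mathrm{PGL}_2$ to send one $\tau$-pair of marked points to $\{(0:1),(1:0)\}$; any involution interchanging $0$ and $\infty$ is automatically of the shape $\tau(u:v)=(av:u)$ for some $a\in\IC^*$, and the residual scaling $t\mapsto\lambda t$ lets me place a further marked point at $(1:1)$. The remaining two marked points are then $(b:1)$ and its partner $\tau(b:1)=(a:b)$, so the six points are exactly those of Example \ref{exam: case P 3 nodes}, with $a,b\in\IC\setminus\{0,1\}$ and $a\neq b$ forced by distinctness of the points together with the fixed-point-freeness of $\tau$ on them. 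The single-hexagon condition of the previous step then determines, up to relabelling the coordinates (that is, up to permuting $P_1,P_2,P_3$), how the marked points are distributed over the three nodes, and this is precisely the distribution recorded in the example.

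Finally I would reconstruct the map $\bar\nu$. Each homogeneous coordinate, viewed as a section of $\bar\nu^*\ko_{\IP^2}(1)\isom\ko_{\IP^1}(4)$, vanishes exactly at the four marked points lying over the two nodes contained in the corresponding coordinate line, and is therefore determined up to a scalar by those four points. These three scalars are absorbed into the torus stabilising the coordinate triangle inside $\mathrm{PGL}_3$, so $\bar\nu$ agrees, up to projective equivalence, with the parametrisation $x,y,z$ written in Example \ref{exam: case P 3 nodes}. By Koll\'ar's gluing \eqref{diagr: pushout} the surface $X$ is uniquely reconstructed from the triple $(\IP^2,\bar D,\tau)$, and since this datum is projectively equivalent to that of the example, $X$ arises as in Example \ref{exam: case P 3 nodes}. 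The main obstacle is the combinatorial rigidity used in the last two steps: one must verify that the single-hexagon condition, together with the projective freedom still available after normalising the nodes and one $\tau$-pair, really leaves no essential choice in the distribution of the marked points over the nodes, so that no gluing other than the one exhibited in the example can occur.
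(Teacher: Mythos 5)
Your proposal is correct and takes essentially the same route as the paper: combine Corollary \ref{cor: case P at least 3 nodes} with Lemma \ref{lem: reducible quartics} to force $\bar D$ to be an irreducible quartic with exactly three non-colinear nodes, and then invoke the uniqueness, up to choice of coordinates, of the gluing involution satisfying the $\chi$-condition, which is precisely the content of Example \ref{exam: case P 3 nodes} that the paper's two-line proof cites. Your write-up actually supplies more detail than the paper does (the numerics $\mu_1=1$, $\rho=2$, the normalisation of $\tau$, and the honest flagging of the remaining combinatorial verification that the residual projective freedom identifies all admissible matchings of marked points to nodes), so it is, if anything, a more careful rendering of the same argument.
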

\begin{proof}
  Combining Corollary \ref{cor: case P at least 3 nodes} and Lemma \ref{lem: reducible quartics} we see that neccesarily $\bar D$ is a plane quartic with three non-colinear nodes.

  As argued in Example \ref{exam: case P 3 nodes} there is, up to the choice of coordinates only one way to pick the involution $\tau$ satisfying the $\chi$-condition, in other words, $X$ arises as in Example \ref{exam: case P 3 nodes} as claimed. \end{proof}

\subsubsection{Case \caseP{} with $\bar D$ reducible}\label{sssect: case P reducible}
From now on let $\bar D$ be a reducible quartic. The possibilities are given in Lemma \ref{lem: reducible quartics}. We treat the cases separately, starting with the most degenerate ones.

The Gorenstein gluing condition imposes that the induced involution on the preimages of the nodes does not have fixed point, which we will tacitly use to exclude some involutions.

\begin{description}

\item[$\bar D = \text{four general lines}$]
  \begin{figure}
    \centering
    \begin{tikzpicture} [scale = .6,extended line/.style={shorten >=-#1,shorten <=-#1}]

      \begin{scope}[xshift = 12cm, yshift = 8cm]
        \draw[rounded corners] (-2.5, 2.5) rectangle (2.5, -2.5);
        \node at (2.75, 2.75) {$\IP^2$};

        \draw[C1col, name path=L1] (-1.5,-1.5) -- (0.75,1.5) node[right] {\footnotesize{$L_1$}};
        \draw[C1col, name path=L2] (1.5,-1.5)  -- (-0.75,1.5) node[left] {\footnotesize{$L_2$}};
        \draw[C2col, name path=L3] (-1.5,0) -- (2.25,-1.5) node[below,xshift=-0.1em] {\footnotesize{$L_3$}};
        \draw[C2col, name path=L4] (1.5,0)  -- (-2.25,-1.5) node[below,xshift=0.1em] {\footnotesize{$L_4$}};

        \foreach \s in {2,3,4}
        \path [name intersections={of=L1 and L\s,by=p1\s}];
        \foreach \s in {2,3,4}
        \fill[Qs] (p1\s) circle (2pt);
        \foreach \s in {3,4}
        \path [name intersections={of=L2 and L\s,by=p2\s}];
        \foreach \s in {3,4}
        \fill[Qs] (p2\s) circle (2pt);
        \path [name intersections={of=L3 and L4,by=p34}];
        \fill[Qs] (p34) circle (2pt);

        \draw[pfeil] (0, -3) to  node[left] {$\pi$} node[right] { normalisation} ++(0,-2);
      \end{scope}

      \begin{scope}[xshift = 0cm, yshift = 8cm]
        \node at (-3,2.5) {$\tilde{\IP}^2$};

        \draw[C1col] (-2,2) node[above]{\footnotesize{$L_1$}} -- (-2,-2) ;
        \draw[C1col] (2.5,2) node[above]{\footnotesize{$L_2$}} -- (2.5,-2);

        \fill[Qs]
        \foreach \s[count=\si] in {12,13,14}
        {(-2, 2.5-\si) circle (2pt) node[](p\s){} };

        \fill[Qs]
        \foreach \s[count=\si] in {21,23,24}
        {(2.5, 2.5-\si) circle (2pt) node[](p\s){}};

        \draw[C2col] (-1.5,0) -- (2,0);
        \draw[C2col] (-1.5,-1) -- (2,-1);

        \fill[Qs]
        \foreach \s[count=\si] in {31,34,32}
        {(-1.5 +\si, 0) circle (2pt) node[](p\s){}};

        \fill[Qs]
        \foreach \s[count=\si] in {41,43,42}
        {(-1.5 +\si, -1) circle (2pt) node[](p\s){}};

        \draw[C2col] (p34) node[above,xshift=-0.7em] {\footnotesize{$L_3$}};
        \draw[C2col] (p43) node[below,xshift=-0.7em] {\footnotesize{$L_4$}};

        \draw[dashed,extended line=1em] (p12) -- (p21);
        \draw[dashed,extended line=1em] (p13) -- (p31);
        \draw[dashed,extended line=1em] (p14) -- (p41);
        \draw[dashed,extended line=1em] (p23) -- (p32);
        \draw[dashed,extended line=1em] (p24) -- (p42);
        \draw[dashed,extended line=1em] (p34) -- (p43);

        \draw[pfeil] (4,0) to  node[below] { nodes } node[above] { blow up} ++(3,0);
        \draw[pfeil] (0, -3) to  node[left] {glue} node[right,text width=3em] { $L_{1} \sim L_{2} $ $L_{3} \sim L_{4} $} ++(0,-2);
      \end{scope}

      \begin{scope} 
        \node at (-2.5,2) {};

        \draw[C1col] (-2, 1)-- (2,1) node[C1col,right]{\footnotesize{$L_{12}$}};
        \draw[C2col] (-2, -1)-- (2,-1) node[C2col,right]{\footnotesize{$L_{34}$}};

        \fill [Qs]
        \foreach \s[count=\si] in {$P_{12}\sim P_{21}$,$P_{13}\sim P_{23}$,$P_{14}\sim P_{24}$}
        {
          {($(-3,1)+ \si*(1.5,0)$) circle (2pt)
            node[](p1\si){}
          }
        };

        \fill [Qs]
        \foreach \s[count=\si] in {$P_{31}\sim P_{41}$,$P_{32}\sim P_{42}$,$P_{34}\sim P_{43}$}
        {
          {($(-3,-1)+ \si*(1.5,0)$)  circle (2pt)
            node[](p2\si){}
          }
        };

        \draw[dashed,extended line=1.5em] (p12) -- (p22);
        \draw[dashed,extended line=1.5em] (p12) -- (p23);
        \draw[dashed,extended line=1.5em] (p13) -- (p22);
        \draw[dashed,extended line=1.5em] (p13) -- (p23);

        \draw[pfeil] (4,0) to  node[below] { } node[above] { contract E} ++(3,0);
      \end{scope}

      \begin{scope}
        [xshift = 12cm, yshift =0cm,  looseness=5]
        \node at (2.75, 2.75) {$X_{2,1}$};

        \draw[rounded corners] (-2.5, 2.5) rectangle (2.5, -2.5);
        \path (0,0) coordinate(X);
        \draw[C2col] {(X) -- +(80:0.75) to[out=80, in=10] +(10:0.75) -- (X)}
        {(X) -- +(180+80:2.0) }
        {(X) -- +(180+10:0.6) };

        \draw[C1col] {(X) -- +(180-30:0.75) to[out=180-30, in=180+40] +(180+40:0.75) -- (X)}
        {(X) -- +(-30:2.0) }
        {(X) -- +(40:0.6) };

        \fill[Qs] (X) +(-30:1.2) circle (2pt);
        \fill[Qs] (X) +(180+80:1.2) circle (2pt);
        \fill[Qs] (X) circle (3pt);
      \end{scope}
    \end{tikzpicture}
    \caption{Construction of $X_{2,1}$ and $X_{2,2}$}\label{fig:X21_construction}
  \end{figure}

  \begin{figure}
    \centering
    \begin{tikzpicture} [scale = .6,extended line/.style={shorten >=-#1,shorten <=-#1}]
      \begin{scope}[xshift = 12cm, yshift = 8cm]
        \draw[rounded corners] (-2.5, 2.5) rectangle (2.5, -2.5);
        \node at (2.75, 2.75) {$\IP^2$};

        \draw[C1col, name path=L1] (-1.5,-1.5) -- (0.75,1.5) node[right] {\footnotesize{$L_1$}};
        \draw[C1col, name path=L2] (1.5,-1.5)  -- (-0.75,1.5) node[left] {\footnotesize{$L_2$}};
        \draw[C2col, name path=L3] (-1.5,0) -- (2.25,-1.5) node[below,xshift=-0.1em] {\footnotesize{$L_3$}};
        \draw[C2col, name path=L4] (1.5,0)  -- (-2.25,-1.5) node[below,xshift=0.1em] {\footnotesize{$L_4$}};

        \foreach \s in {2,3,4}
        \path [name intersections={of=L1 and L\s,by=p1\s}];
        \foreach \s in {2,3,4}
        \fill[Qs] (p1\s) circle (2pt);
        \foreach \s in {3,4}
        \path [name intersections={of=L2 and L\s,by=p2\s}];
        \foreach \s in {3,4}
        \fill[Qs] (p2\s) circle (2pt);
        \path [name intersections={of=L3 and L4,by=p34}];
        \fill[Qs] (p34) circle (2pt);

        \draw[pfeil] (0, -3) to  node[left] {} node[right] { } ++(0,-2);
      \end{scope}

      \begin{scope}[xshift = 0cm, yshift = 8cm]
        \node at (-3,2.5) {$\tilde{\IP}^2$};

        \draw[C1col] (-2,2) node[above]{\footnotesize{$L_1$}} -- (-2,-2) ;
        \draw[C1col] (2.5,2) node[above]{\footnotesize{$L_2$}} -- (2.5,-2);

        \fill[Qs]
        \foreach \s[count=\si] in {12,13,14}
        {(-2, 2.5-\si) circle (2pt) node[](p\s){} };

        \fill[Qs]
        \foreach \s[count=\si] in {21,23,24}
        {(2.5, 2.5-\si) circle (2pt) node[](p\s){}};

        \draw[C2col] (-1.5,0) -- (2,0);
        \draw[C2col] (-1.5,-1) -- (2,-1);

        \fill[Qs]
        \foreach \s[count=\si] in {31,34,32}
        {(-1.5 +\si, 0) circle (2pt) node[](p\s){}};

        \fill[Qs]
        \foreach \s[count=\si] in {41,43,42}
        {(-1.5 +\si, -1) circle (2pt) node[](p\s){}};

        \draw[C2col] (p34) node[above,xshift=-0.7em] {\footnotesize{$L_3$}};
        \draw[C2col] (p43) node[below,xshift=-0.7em] {\footnotesize{$L_4$}};

        \draw[dashed,extended line=1em] (p12) -- (p21);
        \draw[dashed,extended line=1em] (p13) -- (p31);
        \draw[dashed,extended line=1em] (p14) -- (p41);
        \draw[dashed,extended line=1em] (p23) -- (p32);
        \draw[dashed,extended line=1em] (p24) -- (p42);
        \draw[dashed,extended line=1em] (p34) -- (p43);

        \draw[pfeil] (4,0) to  node[below] {  } node[above] { } ++(3,0);
        \draw[pfeil] (0, -3) to  node[left] {} node[right,text width=3em] { } ++(0,-2);
      \end{scope}

      \begin{scope}
        \node at (-2.5,2) {};

        \draw[C1col] (-2, 1)-- (2,1) node[C1col,right]{\footnotesize{$L_{12}$}};
        \draw[C2col] (-2, -1)-- (2,-1) node[C2col,right]{\footnotesize{$L_{34}$}};

        \fill [Qs]
        \foreach \s[count=\si] in {$P_{12}\sim P_{21}$,$P_{13}\sim P_{23}$,$P_{14}\sim P_{24}$}
        {
          {($(-3,1)+ \si*(1.5,0)$) circle (2pt)
            node[](p1\si){}
          }
        };

        \fill [Qs]
        \foreach \s[count=\si] in {$P_{31}\sim P_{41}$,$P_{32}\sim P_{42}$,$P_{34}\sim P_{43}$}
        {
          {($(-3,-1)+ \si*(1.5,0)$)  circle (2pt)
            node[](p2\si){}
          }
        };

        \draw[dashed,extended line=1.5em] (p11) -- (p22);
        \draw[dashed,extended line=1.5em] (p12) -- (p23);
        \draw[dashed,extended line=1.5em] (p13) -- (p23);
        \draw[dashed,extended line=1.0em] (p12) to[out=40, in =180-40] (p13);

        \draw[pfeil] (4,0) to  node[below] { } node[above] { } ++(3,0);
      \end{scope}

      \begin{scope}
        [xshift = 12cm, yshift =0cm,  looseness=5]
        \draw[rounded corners] (-2.5, 2.5) rectangle (2.5, -2.5);

        \path (0,0) coordinate(X);
        \draw[C1col] {(X) -- +(80:0.75) to[out=80, in=0] +(0:0.75) -- (X)}
        {(X) -- +(180+80:0.9) }
        {(X) -- +(180:2.0) };

        \draw[C2col,looseness=4]
        {(X) -- +(180+40:0.75) to[out=180+40, in=180+60] ++(180:1.2) node[](p1){} -- +(60:0.8)}
        {(X) -- +(40:0.6)};

        \fill[Qs] (p1) circle(2.5pt);
        \fill[Qs] (X) circle (2.5pt);
      \end{scope}

    \end{tikzpicture}
    \caption{Construction of $X_{2,3}$}\label{fig:X23_construction}
  \end{figure}
  This case has been classified in \cite[Sect.~4.2]{FPR15inv} and we follow their notation. Let $\bar D = L_1 +L_2 + L_3 + L_4$ be the union of four general lines. We denote $P_{(ij)}$ the intersection point of $L_i$ and $L_j$. The normalisation of $\bar D$ is $\bar D^{\nu} = \sqcup L_i$ and we denote by $P_{ij}$ the point of $L_i \subset \bar D^{\nu}$ that maps to $P_{(ij)}$.

  Since every component of $\bar D^{\nu}$ contains three such points, $\tau$ cannot preserve any of the $L_i$, so we may assume that it maps $L_1$ to $L_2$ and $L_3$ to  $L_4$. Then $\tau$ is uniquely determined by two bijections
  \[\varphi_{12}\colon \{P_{12}, P_{13}, P_{14} \} \to \{P_{21}, P_{23}, P_{24} \}.
  \]
  \[\varphi_{34}\colon \{P_{31}, P_{32}, P_{34} \} \to \{P_{41}, P_{42}, P_{43} \}.
  \]
  By loc.\ cit.\ $X$ is isomorphic to one (and only one) of the surfaces $X_{2,1}, X_{2,2}$ and $X_{2,3}$ corresponding to the involutions listed in Table \ref{tab: four general lines}.
  \begin{table}[h]\caption{Surfaces from four lines in the plane from \cite{FPR15a}}\label{tab: four general lines}
    \begin{center}
      \begin{tabular}{ c c c c }
        \toprule
        Surface  & $\varphi_{12}$ and	 $\varphi_{34}$ & Degenerate cusps  \\
        \midrule
        $X_{2,1}$
                 &  $\varphi_{12} =\begin{pmatrix} P_{12} & P_{13} & P_{14}\\ P_{21} & P_{24} & P_{23}\end{pmatrix}$
                 & $\{P_{(12)}\}$, $\{P_{(34)}\}$,
        \\
                 &  $\varphi_{34} =\begin{pmatrix} P_{31} & P_{32} & P_{34}\\ P_{41} & P_{42} & P_{43}\end{pmatrix}$
                 &$\{P_{(13)}, P_{(14)}, P_{(23)}, P_{(24)}\}$
        \\
        \midrule
        $ X_{2,2}$
                 &  $\varphi_{12} =\begin{pmatrix} P_{12} & P_{13} & P_{14}\\ P_{21} & P_{23} & P_{24}\end{pmatrix}$
                 & $\{ P_{(12)}\}$, $\{P_{(34)}\}$,
        \\
                 &  $\varphi_{34} =\begin{pmatrix} P_{31} & P_{32} & P_{34}\\ P_{42} & P_{41} & P_{43}\end{pmatrix}$
                 &$\{P_{(13)}, P_{(14)}, P_{(23)}, P_{(24)}\}$
        \\
        \midrule
        $ X_{2,3}$
                 & $\varphi_{12} =\begin{pmatrix} P_{12} & P_{13} & P_{14}\\ P_{23} & P_{24} & P_{21}\end{pmatrix}$
                 & $\{P_{(12)}, P_{(23)}, P_{(14)}\}$,
        \\
                 &  $\varphi_{34} =\begin{pmatrix} P_{31} & P_{32} & P_{34}\\ P_{42} & P_{41} & P_{43}\end{pmatrix}$
                 &$\{P_{(13)}, P_{(24)}\}$, $\{P_{(34)}\}$
        \\
        \bottomrule
      \end{tabular}

    \end{center}
  \end{table}

\item[$\bar D = \text{a conic and two lines}$] The gluing involution $\tau$ has to preserve the conic and exchange the two lines, because they contain a different number of preimages of nodes. Thus we have five nodes in total, two fixed points of $\tau$ on the conic and thus by  \eqref{eq: nodes on barD}  exactly two degenerate cusps.

  We will now determine all possible involutions $\tau$, using the notation
  from Figure
 \ref{fig: conic and two lines A'}.

  \begin{figure}
    \centering
    \begin{tikzpicture} [scale = .6,extended line/.style={shorten >=-#1,shorten <=-#1}]

      \begin{scope}[xshift = 0cm, yshift = 7cm]
        \draw[C2col] (-2,2.5) -- (-2,-2.5);
        \fill[C2col] (-2,2.0) circle (2pt) node[left]{\footnotesize{$R_1$}}
        (-2,0.0) circle (2pt) node[left]{\footnotesize{$Q_1$}}
        (-2,-2.0) circle (2pt) node[left]{\footnotesize{$P_1$}};

        \draw[C2col] (2,2.5) -- (2,-2.5);
        \fill[C2col] (2,2.0) circle (2pt) node[right]{\footnotesize{$S_2$}}
        (2,0.0) circle (2pt) node[right]{\footnotesize{$T_2$}}
        (2,-2.0) circle (2pt) node[right]{\footnotesize{$P_2$}};

        \draw[C1col] (0,2.5) -- (0,-2.5);
        \fill[C1col] (0,2.0) circle (2pt) node[left]{\footnotesize{$T_{3}$}}
        (0,0.66) circle (2pt) node[left]{\footnotesize{$S_{3}$}}
        (0,-0.66) circle (2pt) node[left]{\footnotesize{$R_{3}$}}
        (0,-2.0) circle (2pt) node[left]{\footnotesize{$Q_{3}$}};

        \draw[pfeil] (3.5, 0) to  ++(1,0);
        \draw[pfeil] (0, -3.0) to  ++(0,-1);
      \end{scope}

      \begin{scope}[xshift = 8.cm, yshift = 7cm]
        \node at (2.75, 3.1) {\footnotesize{$\bar{X}=\IP^2$}};
        \draw[rounded corners] (-2.75, 2.75) rectangle (2.5, -2.5);

        \draw[C1col,rotate=110, name path=eclipse] (0,0) ellipse (2cm and 1.5cm);
        \draw[C2col, name path=L1] (-2,-2.25) -- (0.0,2.25) node[right]{\footnotesize{$L_1$}};
        \draw[C2col, name path=L2] (-2.25,-1.75) -- (2,0) node[above]{\footnotesize{$L_2$}};
        \path [name intersections={of=L1 and eclipse}];
        \coordinate[label={[C2col]left:\footnotesize{Q}}] (p11)  at (intersection-1);
        \coordinate[label={[C2col]above left:\footnotesize{R}}] (p12) at (intersection-2);
        \path [name intersections={of=L2 and eclipse}];
        \coordinate[label={[C2col]above right:\footnotesize{T}}] (p21)  at (intersection-1);
        \coordinate[label={[C2col]above left:\footnotesize{S}}] (p22) at (intersection-2);

        \fill[C2col] (p11) circle (2pt)
        (p12) circle (2pt)
        (p21) circle (2pt)
        (p22) circle (2pt);

        \path [name intersections={of=L1 and L2}];
        \coordinate[label={[C2col]below left:\footnotesize{P}}] (px)  at (intersection-1);
        \fill[C2col] (px) circle (2pt);

        \draw[pfeil] (0, -3) to  ++(0,-1);
      \end{scope}

      \begin{scope}[xshift = 1cm, yshift = 0cm] 
        \draw[C2col] (-2.5,2.5) -- (-2.5,-2.5);
        \fill[C2col] (-2.5,2.0) coordinate (l1) circle (2pt) node[above left]{\footnotesize{$R_1\sim T_2$}}
        (-2.5,0.0) coordinate (l2) circle (2pt) node[above left]{\footnotesize{$Q_1\sim S_2$}}
        (-2.5,-2.0)coordinate (l3) circle (2pt) node[above left]{\footnotesize{$P_1\sim P_2$}};

        \draw[C1col] (0,2.5) -- (0,-2.5);
        \fill[C1col] (0,2.0)coordinate (r1) circle (2pt) node[right,xshift=0.5em]{\footnotesize{$S_{3}\sim T_{3}$}}
        (0,-2.0)coordinate (r4) circle (2pt) node[right,xshift=0.5em]{\footnotesize{$R_{3}\sim Q_{3}$}};
        \draw[pfeil] (2.5,0) to ++(1,0);
      \end{scope}

      \begin{scope}
        [xshift = 8cm, yshift =0cm, looseness=5 ]
        \node at (2.75, 2.75) {$X$};
        \draw[rounded corners] (-2.5, 2.5) rectangle (2.5, -2.5);
        \path (0,0) coordinate(X);
        \draw[C2col] {(X) -- +(80:0.75) to[out=80, in=10] +(10:0.75) -- (X)}
        {(X) -- +(180+80:2.0) }
        {(X) -- +(180+10:0.6) };

        \draw[C1col] {(X) -- +(180-30:0.75) to[out=180-30, in=180+40] +(180+40:0.75) -- (X)}
        {(X) -- +(-30:2.0) }
        {(X) -- +(40:0.6) };

        \fill[Qs] (X) +(180+80:1.2) circle (2pt);
        \fill[Qs] (X) circle (3pt);
      \end{scope}
    \end{tikzpicture}
    \caption{conic and two lines, Case $A'$}\label{fig: conic and two lines A'}
  \end{figure}

 \begin{figure}
   \centering
   \begin{tikzpicture} [scale = .6,extended line/.style={shorten >=-#1,shorten <=-#1}]

  \begin{scope}[xshift = 0cm, yshift = 7cm]
    \draw[C2col] (-2,2.5) -- (-2,-2.5);
    \fill[C2col] (-2,2.0) circle (2pt) node[left]{\footnotesize{$R_1$}}
    (-2,0.0) circle (2pt) node[left]{\footnotesize{$Q_1$}}
    (-2,-2.0) circle (2pt) node[left]{\footnotesize{$P_1$}};

    \draw[C2col] (2,2.5) -- (2,-2.5);
    \fill[C2col] (2,2.0) circle (2pt) node[right]{\footnotesize{$S_2$}}
    (2,0.0) circle (2pt) node[right]{\footnotesize{$T_2$}}
    (2,-2.0) circle (2pt) node[right]{\footnotesize{$P_2$}};

    \draw[C1col] (0,2.5) -- (0,-2.5);
    \fill[C1col] (0,2.0) circle (2pt) node[left]{\footnotesize{$T_{3}$}}
    (0,0.66) circle (2pt) node[left]{\footnotesize{$S_{3}$}}
    (0,-0.66) circle (2pt) node[left]{\footnotesize{$R_{3}$}}
    (0,-2.0) circle (2pt) node[left]{\footnotesize{$Q_{3}$}};

    \draw[pfeil] (3.5, 0) to  ++(1,0);
    \draw[pfeil] (0, -3.0) to  ++(0,-1);
  \end{scope}

  \begin{scope}[xshift = 8.cm, yshift = 7cm]
    \node at (2.75, 3.1) {\footnotesize{$\bar{X}$}};
    \draw[rounded corners] (-2.75, 2.75) rectangle (2.5, -2.5);

    \draw[C1col,rotate=110, name path=eclipse] (0,0) ellipse (2cm and 1.5cm);
    \draw[C2col, name path=L1] (-2,-2.25) -- (0.0,2.25) node[right]{\footnotesize{$L_1$}};
    \draw[C2col, name path=L2] (-2.25,-1.75) -- (2,0) node[above]{\footnotesize{$L_2$}};
    \path [name intersections={of=L1 and eclipse}];
    \coordinate[label={[C2col]left:\footnotesize{Q}}] (p11)  at (intersection-1);
    \coordinate[label={[C2col]above left:\footnotesize{R}}] (p12) at (intersection-2);
    \path [name intersections={of=L2 and eclipse}];
    \coordinate[label={[C2col]above right:\footnotesize{T}}] (p21)  at (intersection-1);
    \coordinate[label={[C2col]above left:\footnotesize{S}}] (p22) at (intersection-2);

    \fill[C2col] (p11) circle (2pt)
    (p12) circle (2pt)
    (p21) circle (2pt)
    (p22) circle (2pt);

    \path [name intersections={of=L1 and L2}];
    \coordinate[label={[C2col]below left:\footnotesize{P}}] (px)  at (intersection-1);
    \fill[C2col] (px) circle (2pt);

    \draw[pfeil] (0, -3) to  ++(0,-1);
  \end{scope}

  \begin{scope}[xshift = 1cm, yshift = 0cm] 
    \draw[C2col] (-2.5,2.5) -- (-2.5,-2.5);
    \fill[C2col] (-2.5,2.0) coordinate (l1) circle (2pt) node[above left]{\footnotesize{$P_1\sim S_2$}}
    (-2.5,0.0) coordinate (l2) circle (2pt) node[above left]{\footnotesize{$Q_1\sim P_2$}}
    (-2.5,-2.0)coordinate (l3) circle (2pt) node[above left]{\footnotesize{$R_1\sim T_2$}};

    \draw[C1col] (0,2.5) -- (0,-2.5);
    \fill[C1col] (0,2.0)coordinate (r1) circle (2pt) node[above right]{\footnotesize{$R_3\sim T_{3}$}}
    (0,-2.0)coordinate (r4) circle (2pt) node[above right]{\footnotesize{$Q_{3}\sim S_3$}};

%
    \draw[pfeil] (2.5,0) to ++(1,0);
  \end{scope}

  \begin{scope}
    [xshift = 8cm, yshift =0cm, looseness=5 ]
    \node at (2.5, 2.75) {\footnotesize{$X$}};
    \draw[rounded corners] (-2.5, 2.5) rectangle (2.5, -2.5);
    \path (0,0) coordinate(X);
    \draw[C2col] {(X) -- +(80:0.75) to[out=80, in=0] +(0:0.75) -- (X)}
    {(X) -- +(180+80:0.9) }
    {(X) -- +(180:2.0) };

    \draw[C1col,looseness=4]
    {(X) -- +(180+40:0.75) to[out=180+40, in=180+60] ++(180:1.2) node[](p1){} -- +(60:0.8)}
    {(X) -- +(40:0.6)};

    \fill[Qs] (p1) circle(2.5pt);
    \fill[Qs] (X) circle (2.5pt);

  \end{scope}
\end{tikzpicture}
   \caption{conic and two lines,  Case $B$}\label{fig: conic and two lines B}
   \end{figure}

  \begin{description}
  \item[Case $A'$ and Case $A''$:] Assume that $\tau(P_1) = P_2$, that is, the
    preimage of one degenerate cusp consists solely of the intersection point
    $P$ of the two lines. Then all other preimages of nodes have to be
    equivalent under the equivalence relation of Remark \ref{rem: pushout
      equivalence relation} and there are up to renaming two possibilties
    $\tau'$ and $\tau''$: either $\tau|_C$ preserves the intersection $L_i\cap
    C$, that is, 
  \[ \tau'(Q_{3}) = R_{3} \text { and } \tau'(S_{3}) = T_{3},\]
    or it does not, that is,
    \[ \tau''(Q_{3}) = S_{3} \text { and } \tau''(R_{3}) = T_{3}.\]
In order to ensure the correct number of degenerate cusps we need to have
\[ 
\tau'\hat= \begin{pmatrix} P_1 & Q_1 & R_1\\ P_2 &  S_2 & T_2 \end{pmatrix} \text{ and } \tau''\hat= \begin{pmatrix} P_1 & Q_1 & R_1\\ P_2 & T _2 & S_2 \end{pmatrix}.
\]
    These constructions depends on one parameter, namely the choice of the conic. If we degenerate the conic to a pair of lines, we arrive at $X_{2,1}$ or $X_{2,2}$ from Table \ref{tab: four general lines}.

  \item[Case $B$:]
    If $\tau(P_1) \neq P_2$, then we can choose the involution on the lines to be given as
    \[  \begin{pmatrix} P_1 & Q_1 & R_1\\ S_2 &  P_2 & T_2 \end{pmatrix}.\]
    and $\tau|_C(Q_3) = S_3$ and thus $\tau|_C(R_3) = T_3$ such that the preimages of the two  degenerate cusps are $\{ P, Q, S\}$ and $\{ R, T\}$.

    This construction depends on one parameter, namely the choice of the conic. If we degenerate the conic to a pair of lines, we arrive at $X_{2,3}$ from Table \ref{tab: four general lines}.
  \end{description}

\item[$\bar D = \text{ two irreducible conics}$]
  By \eqref{eq: nodes on barD} there are two possibilities:
  \begin{description}

  \item[Case A: $\rho = 4$ and $\mu_1 = 1$]
    \begin{figure}
      \centering
      \begin{tikzpicture}
        [pfeil/.style = {->, every node/.style = { font = \scriptsize}},
        scale = .6]
        \begin{scope}[xshift = 12cm, yshift = 8cm]
          \node at (2.75, 2.75) {$\IP^2$};

          \draw[rounded corners] (-2.5, 2.5) rectangle (2.5, -2.5);
          \draw[C1col, name path = C1] (0,0) ellipse (1 and 1.5 ) ++(0,1.5) node[above] {};
          \draw[C2col, name path = C2] (0,0) ellipse (1.5 and 1 ) ++(1.5,0) node[right] {};;

          \fill [name intersections={of= C1 and C2, name=Q}]
          [Qs]
          \foreach \s in {1,...,4}
          {(Q-\s) circle (2pt) ++(-45+\s*90:.4) node  { $Q_\s$}};

          \draw[pfeil] (0, -3) to  node[left] {$\pi$} node[right] { normalisation} ++(0,-2);
        \end{scope}

        \begin{scope}[xshift = 0cm, yshift = 8cm,every node/.style = { font = \footnotesize}]
          \node at (-2.5,2) {$\tilde{\IP}^2$};
          \draw[C1col] (-2, 1)-- (2,1) node[right]{$\bar{C_1}$};
          \draw[C2col] (-2, -1)-- (2,-1) node[right]{$\bar{C_2}$};

          \fill [Qs]
          \foreach \s in {1,...,4}
          {(-2.5+\s,1) circle (2pt) node[C1col, above](p\s)  { $P_\s$}};

          \fill [Qs]
          \foreach \s[count=\si] in {1,2,3,4}
          {(-2.5+\si,-1) circle (2pt) node[C2col, below](q\s)  { $Q_\s$}};

          \draw[pfeil,<->] ([yshift=-0.4em]p1.south) to[out=-40, in =180+40]
          ([yshift=-0.4em]p2.south);
          \draw[pfeil,<->] ([yshift=-0.4em]p3.south) to[out=-40, in =180+40]
          ([yshift=-0.4em]p4.south);

          \draw[pfeil,<->] ([yshift=0.4em]q1.north) to[out=40, in =180-40]
          ([yshift=0.4em]q3.north);
          \draw[pfeil,<->] ([yshift=0.4em]q2.north) to[out=40, in =180-40]
          ([yshift=0.4em]q4.north);

          \draw[pfeil] (4,0) to  node[below] { of $\bar D$} node[above] { normalisation} ++(3,0);
          \draw[pfeil] (0, -3) to  node[left] {} node[right] { $/\tau$} ++(0,-2);
        \end{scope}

        \begin{scope} 
          \node at (-2.5,2) {};
          \draw[C1col] (-2, 1)-- (2,1) node[right]{};
          \draw[C2col] (-2, -1)-- (2,-1) node[right]{};
          \fill [Qs]
          \foreach \s[count=\si] in {$P_1 \sim P_2$,$P_3 \sim P_4$}
          {($(-3,1)+\si*(2,0)$) circle (2pt) node[C1col, above](p\si){\tiny{\s}}};

          \fill [Qs]
          \foreach \s[count=\si] in {$Q_1 \sim Q_3$,$Q_2 \sim Q_4$}
          {($(-3,-1)+\si*(2,0)$) circle (2pt) node[C2col, above](q\si){\tiny{\s}}};

          \draw[pfeil] (4,0) to  node[below] { } node[above] { normalisation} ++(3,0);
        \end{scope}

        \begin{scope}
          [xshift = 12cm, yshift =0cm,  looseness=5]
          \node at (2.75, 2.75) {$X$};
          \path (0,0) coordinate(X);
          \draw[C2col] {(X) -- +(80:0.75) to[out=80, in=10] +(10:0.75) -- (X)}
          {(X) -- +(180+80:2.0) }
          {(X) -- +(180+10:0.6) };

          \draw[C1col] {(X) -- +(180-30:0.75) to[out=180-30, in=180+40] +(180+40:0.75) -- (X)}
          {(X) -- +(-30:2.0) }
          {(X) -- +(40:0.6) };

          \fill[Qs] (X) circle (3pt);

        \end{scope}
      \end{tikzpicture}
      \caption{Case A: $\rho = 4$ and $\mu_1=1$}
      \label{fig: Case A}
    \end{figure}
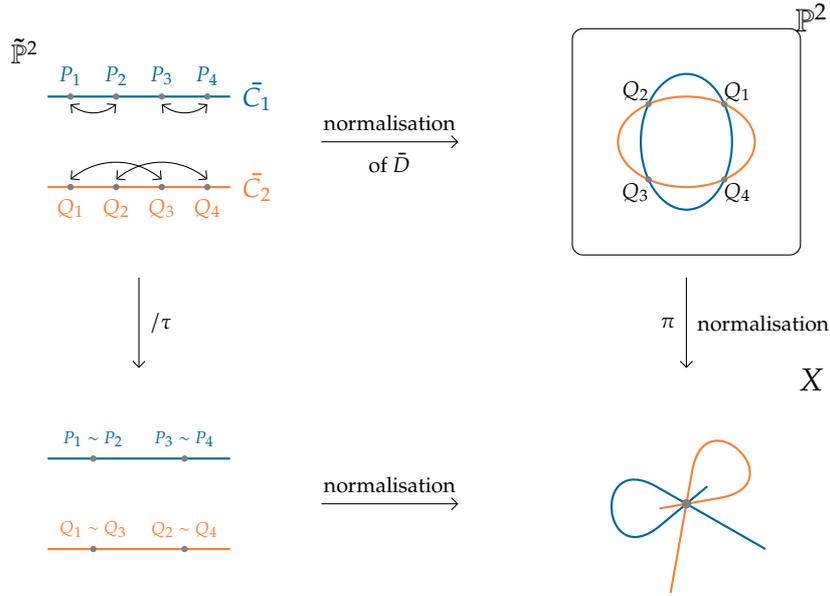
    In this case the involution preserves the two components of $\bar D^\nu$ and, in order for there to be only one degenerate cusp, one can name the points such that the involution is as Figure \ref{fig: Case A}.

    It is an elementary fact, that given a projective line with four marked points, there is always an involution exchanging two pairs of points (compare Example \ref{exam: case P 3 nodes}), so the desired involutions exist on any two smooth  conics in the pencil.

    The construction depends on the choice of the two conics in a pencil, that is, two parameters. If we let one of the conics degenerate to  a pair of lines then we can arrive at the possibilities considered in Case $A'$ and Case $A''$ above. Making both conics reducible gives the surfaces $X_{2,1}$ and $X_{2,2}$ from Table \ref{tab: four general lines}.

  \item[Case C: $\rho = 0 $ and $\mu_1 = 2$]
    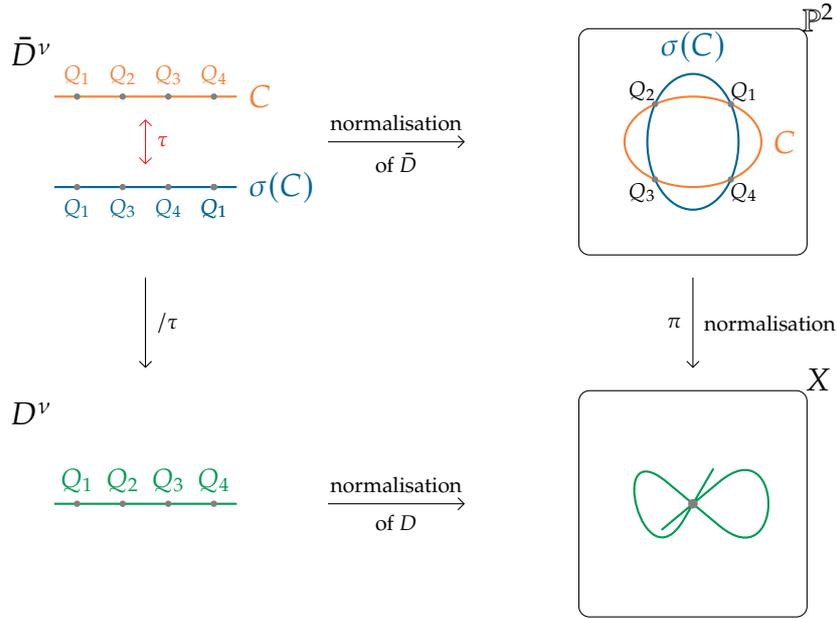
\begin{figure}
      \centering
      \begin{tikzpicture}
        [pfeil/.style = {->, every node/.style = { font = \scriptsize}},
        scale = .6]

        \begin{scope}[xshift = 12cm, yshift = 8cm]
          \node at (2.75, 2.75) {$\IP^2 $};

          \draw[rounded corners] (-2.5, 2.5) rectangle (2.5, -2.5);
          \draw[C1col, name path = C1] (0,0) ellipse (1 and 1.5 ) ++(0,1.5) node[above] {$\sigma(C)$};
          \draw[C2col, name path = C2] (0,0) ellipse (1.5 and 1 ) ++(1.5,0) node[right] {$C$};;

          \fill [name intersections={of= C1 and C2, name=Q}]
          [Qs]
          \foreach \s in {1,...,4}
          {(Q-\s) circle (2pt) ++(-45+\s*90:.4) node  { $Q_\s$}};

          \draw[pfeil] (0, -3) to  node[left] {$\pi$} node[right] { normalisation} ++(0,-2);
        \end{scope}

        \begin{scope}[xshift = 0cm, yshift = 8cm]
          \node at (-2.5,2) {$\bar D^\nu$};
          \draw[ pfeil,<->, Red] (0,.5) to node[right]{$\tau$} (0,-.5);
          \draw[C1col] (-2, -1)-- (2,-1) node[right]{$\sigma(C)$};
          \draw[C2col] (-2, 1)-- (2,1) node[right]{$C$};

          \fill [Qs]
          \foreach \s in {1,...,4}
          {(-2.5+\s,1) circle (2pt) node[C2col, above]  { $Q_\s$}};

          \fill [Qs]
          \foreach \s[count=\si] in {1,3,4,1}
          {(-2.5+\si,-1) circle (2pt) node[C1col, below]  { $Q_\s$}}
          (1.5,-1) circle (2pt) node[C1col, below]  { $Q_1$};

          \draw[pfeil] (4,0) to  node[below] { of $\bar D$} node[above] { normalisation} ++(3,0);
          \draw[pfeil] (0, -3) to  node[left] {} node[right] { $/\tau$} ++(0,-2);
        \end{scope}

        \begin{scope} 
          \node at (-2.5,2) {$ D^\nu$};
          \draw[C12col] (-2, 0)-- (2,0) node[right]{
          };
          \fill [Qs]
          \foreach \s in {1,...,4}
          {(-2.5+\s,0) circle (2pt) node[C12col, above]  {\footnotesize $Q_\s$}};
          \draw[pfeil] (4,0) to  node[below] { of $ D$} node[above] { normalisation} ++(3,0);
        \end{scope}

        \begin{scope}
          [xshift = 12cm, yshift =0cm,  looseness=5]
          \node at (2.75, 2.75) {$X$};
          \draw[rounded corners] (-2.5, 2.5) rectangle (2.5, -2.5);
          \path (0,0) coordinate(P) ;

          \path (0,0) coordinate(X);
          \draw[C12col] {(X) -- +(40:0.75) to[out=40, in=-40] +(-40:0.75) -- (X)}
          {(X) -- +(180+40:0.9) }
          {(X) -- +(180-40:0.75) to [out=180-40, in=180+60] +(180+60:0.4) -- (X)}
          {(X) -- +(60:0.9) };
          \fill[Qs] (X) circle (3pt);

        \end{scope}
      \end{tikzpicture}
      \caption{Case C:  $\bar D$ = two irreducible conics, $\rho = 0, \mu_1 = 2$}\label{figold: general surface in P1}
    \end{figure}
    Since an involution on $\IP^1$ has fixed points, $\tau $ exchanges the components
    $\bar D = C' + C''$, that is, $\tau$ is induced by an abstract isomorphism $\phi\colon C' \to C''$.

    Let us denote the four intersection points of the two conics $C', C''$ with $Q_1, \dots, Q_4$. We add the primes if we consider the points on the individual conics.
    By \eqref{eq: nodes on barD} we have two  degenerate cusps, say $R_1$ and $R_2$. Up to reindexing there are again two cases:
    \begin{description}
    \item[$\inverse\pi(R_1) = \{Q_1\}$, $\inverse\pi(R_2) = \{Q_2, Q_3, Q_4\}$:]
      The abstract isomorphism $\phi$ has the property, up to reindexing again,
      \[ \phi(Q_1') = Q_1'', \phi(Q_2') = Q_3'', \phi(Q_3')=Q_4'', \phi(Q_4') = Q_2''.\]

      Now consider the unique automorphism $\sigma$ of $\IP^2$ that acts on the $Q_i$, considered as point in the plane,  in the same way as $\phi$ and let $\sigma C'$ be the image of $C'$ under $\sigma$.
      The composition $\sigma\circ \inverse\phi\colon C''\to  \sigma C'$ is an abstract isomorphism of two plane conics fixing four points in the plane. By \cite[Es.~4.24]{FFP16}  it is actually induced by the identity on $\IP^2$, thus $C'' = \sigma C'$ and $\phi=\sigma|_C'$.

      Since $C''$ is determined by $C'$, this construction depends on the one parameter. If we let $C'=L_1+L_3$ the union of two lines, then the above construction still provides us with a suitable involution and it is straightforward to check, that it gives the case $X_{2,3}$ from Table \ref{tab: four general lines}.

    \item[$\inverse\pi(R_1) = \{Q_1, Q_2\}$, $\inverse\pi(R_2) = \{Q_3, Q_4\}$:]
      Assume there is such an involution on $\bar D^\nu$. Then the involution descends to $\bar D$ itself violating Proposition \ref{prop: case P exclude etale maps}.

      Put differently, the argument used in the previous case does not work because the morphism defined on the points will fix the given conic, see \cite[Es.~4.25]{FFP16}.

      Thus this case does not occur.
    \end{description}
  \end{description}

 \item[$\bar D = \text{a smooth or   nodal cubic and line}$]
 The involution has to preserve the line, because either the number of marked points on the two components
of $\bar D^\nu$ is different or they are not isomorphic.
But on a line with three marked points $\tau$ cannot induce a fixed-point-free involution on the marked points in violation of the Gorenstein-condition. 

Therefore this case cannot exist.
 \end{description}
 
 We have enumerated all possible cases for $\bar D$ and thus concluded the classification. 
 
  \begin{prop}
 Let $X$ be a Gorenstein stable surface with $K_X^2 = 1$ and $\chi(X) = 2$. If the normalisation $\bar X = \IP^2$ and $\bar D\subset \bar X$ is reducible, then $X$ arises as in Cases A, B, C in Section \ref{sssect: case P reducible} or as a degeneration thereof.
 \end{prop}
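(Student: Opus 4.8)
The plan is to collate the case-by-case analysis carried out above into a single statement. First I would apply Lemma~\ref{lem: reducible quartics} together with the numerical restriction \eqref{eq: nodes on barD}, which forces $3\le\bar\mu\le 6$ for a reducible $\bar D$, in order to reduce to the finitely many shapes of a reducible nodal quartic: a smooth or nodal cubic together with a line, two irreducible conics, a conic and two lines, and four general lines.

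Next I would eliminate the cubic-plus-line configurations. In these the line component carries a different number of preimages of nodes than the cubic component (or the two components are not abstractly isomorphic), so any admissible gluing involution $\tau$ must preserve the line. But a copy of $\IP^1$ carrying three marked points admits no involution restricting to a fixed-point-free involution on those points, contradicting the Gorenstein-gluing condition; hence these shapes do not occur, as already observed. For each of the three remaining shapes I would then invoke the explicit determination of all admissible involutions performed in this section: four general lines give precisely $X_{2,1}$, $X_{2,2}$, $X_{2,3}$ of Table~\ref{tab: four general lines}; a conic and two lines give the families labelled $A'$, $A''$ and $B$; and two irreducible conics give Case~A (with $\rho=4$, $\mu_1=1$) and the surviving subcase of Case~C (with $\rho=0$, $\mu_1=2$), the remaining subcase being ruled out by Proposition~\ref{prop: case P exclude etale maps} because the putative involution would descend to a fixed-point-free involution of the plane curve $\bar D$.

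Finally I would assemble these into the degeneration hierarchy asserted in the statement. The two-conic families, Case~A and Case~C, carry the most moduli (two, respectively one, parameters); letting one conic of the defining pencil break into a pair of lines specialises Case~A to Cases~$A'$, $A''$ and then, with both conics reducible, to $X_{2,1}$, $X_{2,2}$, while the same specialisation applied to the surviving subcase of Case~C produces Case~$B$ and then $X_{2,3}$. This exhibits every reducible configuration as one of Cases~A, B, C or a degeneration thereof. I expect this last step to be the main obstacle: one must check that under such a flat degeneration the chosen $\tau$ limits to the prescribed involution on the degenerate configuration, so that the equivalence classes of Remark~\ref{rem: pushout equivalence relation}---and hence the pattern of degenerate cusps recorded in Table~\ref{tab: four general lines} and Figures~\ref{fig: conic and two lines A'}--\ref{figold: general surface in P1}---match up correctly in the limit.
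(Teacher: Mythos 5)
Your proposal follows essentially the same route as the paper: there the proposition carries no separate proof at all, being simply the collation of the preceding case analysis --- the shapes of a reducible nodal quartic from Lemma \ref{lem: reducible quartics}, the exclusion of the cubic-plus-line configurations via the Gorenstein gluing condition, the exclusion of the second subcase of two conics via Proposition \ref{prop: case P exclude etale maps}, the explicit involutions in the remaining shapes, and the degeneration remarks recorded case by case. Your collation reproduces all of this correctly, including the specialisations Case A $\rightsquigarrow$ $A'$, $A''$ $\rightsquigarrow$ $X_{2,1}$, $X_{2,2}$.

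One link in your degeneration hierarchy is wrong, however: you claim that the same specialisation applied to Case C ``produces Case B and then $X_{2,3}$''. In Case C the second conic is not a free parameter --- it is $C'' = \sigma C'$, the image of $C'$ under the automorphism $\sigma$ of $\IP^2$ determined by the permutation of the four intersection points. You therefore cannot break one conic into a pair of lines while keeping the other smooth: both components degenerate simultaneously, and the limit is the four-line surface $X_{2,3}$ directly, never an intermediate conic-plus-two-lines configuration. (The gluing data are also incompatible: in Case C the involution exchanges the two components and $\rho=0$, whereas in Case B it preserves the conic with two fixed points, $\rho = 2$.) Case B is an independent top-level case --- which is exactly why the proposition names three cases rather than two --- and $X_{2,3}$ arises as a degeneration of B (its single conic breaking into two lines) as well as of C. Fortunately this slip does not damage your conclusion, since Case B is itself among the named cases, so every surface you tried to realise as a degeneration of C is accounted for anyway. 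Your closing concern, that the involution and the equivalence classes of Remark \ref{rem: pushout equivalence relation} must be checked to match up in the limit, is legitimate, but the paper treats it just as informally (``it is straightforward to check'').
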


\begin{rem}
  By dimension reasons the surfaces constructed in Example \ref{exam: case P 3 nodes} cannot degenerate to the general surface in Case A. It remains to work out explicitly their relation to the other reducible cases.
\end{rem}

\subsection{Cases $(dP)$ and $(E_-)$}
\label{sect: cased dP and E_}
We proceed to the next case, starting with a well-known result, at least in the smooth case (see e.g. \cite[3.5 Theorem]{MR1440180}).
\begin{lem}
  Let $(\bar X, \bar D)$ be a log-canonical pair such that $K_{\bar X}+\bar D$
  is Cartier, $(K_{\bar X}+\bar D)^2 = 1$, and the minimal resolution of $\bar
  X$ is either a del Pezzo surface of degree $1$ or of type $E_-$. 
  Then $-K_{\bar X}$ is an ample Cartier divisor of square $1$ and $\bar D \in |-2K_{\bar X}|$.

  Moreover we have
  \[ R(\bar X, -K_{\bar X}) \isom \IC[x_1, x_2, y, z]/ (f_6)\]
  with variables of degrees $(1,1,2,3)$ and
  \begin{equation}\label{eq: del Pezzo} f_6 = z^2 + a_0 y^3 + a_2 y^2 + a_4y +a_6,
  \end{equation}
  where $a_i = a_i(x_1, x_2)$ is of degree $i$.

  If $\bar D$ is general in $|-2K_{\bar X}|$ then we can choose the coordinates such that $\bar D = \{ y =0\}$; the restriction of the anti-canonical ring to $\bar D$ gives a surjection
  \[ R(\bar X, -K_{\bar X}) \to \IC[x_1, x_2, y, z]/ (f_6, y )= \IC[x_1, x_2, z]/ (z^2 + a_6 ) = R(\bar D, K_{\bar D}).\]
\end{lem}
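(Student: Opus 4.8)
The plan is to reduce both cases to the single statement that $-K_{\bar X}$ is an ample Cartier divisor with $(-K_{\bar X})^2 = 1$ and $\bar D \in |-2K_{\bar X}|$, and then to compute the anticanonical ring $R := R(\bar X, -K_{\bar X})$ and restrict it to $\bar D$. In case $(dP)$ the first claim is built into the definition recalled in the list: $\bar X$ has canonical, hence Gorenstein, singularities, so $K_{\bar X}$ is Cartier, $-K_{\bar X}$ is ample of square $1$, and $\bar D \in |-2K_{\bar X}|$. In case $(E_-)$ I would work on the ruled resolution $\sigma \colon \tilde X = \IP(\ko_E\oplus\ko_E(-x)) \to \bar X$. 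From $K_{\tilde X}\cdot F = -2$ and $K_{\tilde X}^2 = 0$ one gets $K_{\tilde X} = -2C_0 - F$; adjunction on the elliptic curve $C_0$ forces the discrepancy along $C_0$ to be $-1$, so
\[ \sigma^* K_{\bar X} = K_{\tilde X} + C_0 = -C_0 - F, \qquad \sigma^*(-K_{\bar X}) = C_0 + F. \]
Since $(C_0+F)^2 = 1$, $(C_0+F)\cdot C_0 = 0$ and $(C_0+F)\cdot F = 1$, the class $C_0+F$ is nef and big and has degree $0$ exactly along the contracted curve $C_0$, whence $-K_{\bar X}$ is ample of square $1$; Gorensteinness of the elliptic singularity makes it Cartier, and comparing with $\sigma^*(K_{\bar X}+\bar D) = C_0+F$ from the classification gives $\bar D \in |-2K_{\bar X}|$.

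For the ring, Riemann--Roch on the Gorenstein surface $\bar X$ gives $\chi(-nK_{\bar X}) = 1 + \frac{n(n+1)}{2}$, using $\chi(\ko_{\bar X}) = 1$ (in the $E_-$ case because $\chi(\ko_{\tilde X}) = 0$ and the elliptic singularity contributes length $1$ to $R^1\sigma_*\ko_{\tilde X}$). Serre duality gives $H^2(-nK_{\bar X}) = H^0((n+1)K_{\bar X})^\vee = 0$ for $n\geq 0$, and I would establish $H^1(-nK_{\bar X}) = 0$ for all $n$; this yields $h^1(\ko_{\bar X}) = 0$ as well, so that $h^0(-nK_{\bar X}) = 1 + \frac{n(n+1)}{2}$ and $R$ is Cohen--Macaulay. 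The graded pieces thus have dimensions $2,4,7,11,16,22$ in degrees $1,\dots,6$. Taking $x_1, x_2$ spanning $H^0(-K_{\bar X})$ and a complementary $y \in H^0(-2K_{\bar X})$, the functions $x_1, x_2, y$ are algebraically independent and define a finite morphism $\bar X \to \IP(1,1,2)$ of degree $(-K_{\bar X})^2 / \ko_{\IP(1,1,2)}(1)^2 = 1/(1/2) = 2$, so $R$ is a finite $\IC[x_1,x_2,y]$-module of generic rank $2$. Being Cohen--Macaulay of full dimension over the regular ring $\IC[x_1,x_2,y]$ it is free of rank $2$, with basis $1$ and a degree-$3$ element $z$; the relation $z^2 = \alpha - \beta z$ with $\alpha\in\IC[x_1,x_2,y]_6$, $\beta \in \IC[x_1,x_2,y]_3$ gives $R \isom \IC[x_1,x_2,y,z]/(f_6)$. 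Completing the square in $z$ removes the linear term and leaves $f_6 = z^2 + a_0 y^3 + a_2 y^2 + a_4 y + a_6$ with $\deg a_i = i$ in $x_1,x_2$, as claimed.

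For a general $\bar D \in |-2K_{\bar X}|$ I would write $\bar D = \{\mu y + q(x_1,x_2) = 0\}$ with $\mu \neq 0$ and replace $y$ by the equally valid degree-$2$ generator $\mu y + q$, so that $\bar D = \{y = 0\}$. Adjunction gives $K_{\bar D} = (K_{\bar X}+\bar D)|_{\bar D} = (-K_{\bar X})|_{\bar D}$, and for each $n$ the sequence
\[ 0 \to \ko_{\bar X}(-(n-2)K_{\bar X}) \to \ko_{\bar X}(-nK_{\bar X}) \to \ko_{\bar D}(nK_{\bar D}) \to 0 \]
(coming from $\bar D \sim -2K_{\bar X}$) has $H^1$ of its left term vanishing for all $n\geq 0$ by the vanishing above together with Serre duality. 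Hence the restriction $R \to R(\bar D, K_{\bar D})$ is surjective in every degree, with kernel the principal ideal $(y)$, and therefore $R(\bar D, K_{\bar D}) \isom \IC[x_1,x_2,y,z]/(f_6,y) = \IC[x_1,x_2,z]/(z^2 + a_6)$, the hyperelliptic canonical ring of the genus-$2$ curve $\bar D$.

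I expect the main obstacle to be the $E_-$ case throughout: since its singularity is only log canonical and not canonical, the vanishing $H^1(\bar X, -nK_{\bar X}) = 0$ that underpins both the dimension count and the Cohen--Macaulayness of $R$ cannot simply be quoted from Kawamata--Viehweg and must be verified directly on the ruled resolution, where $\sigma^*(-nK_{\bar X}) = n(C_0+F)$ is nef and big but $n(C_0+F) - K_{\tilde X}$ fails to be nef along $C_0$, so the twist by $K_{\tilde X}$ that vanishing theorems require is not immediately available.
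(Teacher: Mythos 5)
Your proposal follows the same overall strategy as the paper --- reduce both cases to ``$-K_{\bar X}$ is an ample Cartier divisor of square $1$ and $\bar D\in|-2K_{\bar X}|$'', compute the anticanonical ring from vanishing plus Riemann--Roch, and restrict to $\bar D$ --- and several of your steps are actually more detailed than what the paper writes: the explicit discrepancy and nef-and-big computation in case $(E_-)$ (the paper simply quotes this from the classification list, which already records that $-K_{\bar X}$ is ample and $K_{\bar X}+\bar D=-K_{\bar X}$); the restriction sequence (the paper says ``follows immediately''); and the identification of $R(\bar X,-K_{\bar X})$ as a free rank-$2$ graded module over $\IC[x_1,x_2,y]$ with basis $1,z$, where the paper instead counts dimensions degree by degree and compares Hilbert series to exclude further relations. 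The genuine gap is the step you yourself flag and then leave open: you never establish $H^1(\bar X,-nK_{\bar X})=0$, and everything downstream rests on it --- the count $h^0(-nK_{\bar X})=1+\frac{n(n+1)}{2}$, the Cohen--Macaulayness and hence freeness of the ring, and the surjectivity of $R(\bar X,-K_{\bar X})\to R(\bar D,K_{\bar D})$. (The paper is equally cavalier, writing $H^i(X,K_X+(-m-1)K_X)=0$ as if Kodaira vanishing on the singular surface were automatic; you are right that in case $(E_-)$ the surface is lc but not klt, so plain Kawamata--Viehweg cannot be quoted off the shelf.)

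Your closing paragraph, however, overstates the difficulty: the vanishing is true and can be supplied in a few lines. Either quote the vanishing theorem of Fujino--Ambro for log canonical pairs, which gives exactly $H^i(\bar X,K_{\bar X}+A)=0$ for $A$ ample Cartier, or argue directly on the resolution $\sigma\colon\tilde X\to\bar X$ with no vanishing theorem upstairs at all. Set $L_n=\sigma^*(-nK_{\bar X})=n(C_0+F)$; then $\sigma_*L_n=\ko_{\bar X}(-nK_{\bar X})$, and $R^1\sigma_*L_n\isom\ko_{\bar X}(-nK_{\bar X})\tensor R^1\sigma_*\ko_{\tilde X}$ is a skyscraper of length $1$ because the singularity is elliptic of degree $1$. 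Pushing down along the ruling $a\colon\tilde X\to E$ one finds $a_*L_n=\bigoplus_{k=0}^n\ko_E((n-k)x)$ and $R^1a_*L_n=0$, so $h^1(\tilde X,L_n)=h^1(E,\ko_E)=1$. Since $H^2(\bar X,-nK_{\bar X})\isom H^0(\bar X,(n+1)K_{\bar X})^\vee=0$ for $n\geq 0$, the Leray five-term sequence
\[ 0\to H^1(\bar X,-nK_{\bar X})\to H^1(\tilde X,L_n)\to H^0(R^1\sigma_*L_n)\to H^2(\bar X,-nK_{\bar X}) \]
forces the middle map to be a surjection $\IC\to\IC$, hence an isomorphism, hence $H^1(\bar X,-nK_{\bar X})=0$; the case $n=0$ also gives $q(\bar X)=0$ and $\chi(\ko_{\bar X})=1$, which your Riemann--Roch computation needs. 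One last small imprecision: a ``complementary'' $y\in H^0(-2K_{\bar X})$ does not automatically make $x_1,x_2,y$ base-point free, since $|-K_{\bar X}|$ has a base point; you must pick $y$ non-vanishing there (a general complement works) for the morphism to $\IP(1,1,2)$ to be finite. With these repairs your argument is complete and, if anything, more self-contained than the paper's.
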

\begin{proof}
  Assume that $\bar{X}$ is a del Pezzo of degree 1, possibly with one elliptic singularity as in case $(E_-)$. Then according to the description in the List \ref{list}, $-K_{\bar X}$ is ample divisor and $K_{\bar{X}} + \bar{D} = -K_{\bar X}$. We can easily compute the canonical ring: for any $m \geq 0$ and for all $i > 0$
  \[
    H^{i}(X, -mK_X) = H^{i}(X, K_X + (-m -1)K_X ) = 0
  \]
and   Riemann-Roch gives us
  \[
    h^0(-mK_X) = \frac{1}{2}(-mK_X) (-mK_X - K_X) +1 = \frac{m(m+1)}{2} +1.
  \]
  Thus $h^0(-K_X) = 2, h^0(-2K_X) = 4$ and $h^0(-3K_X) = 7$. Let $x_1, x_2$ be generators of $H^0(-K_X)$, let $y$ be element in $H^0(-2K_X)$ which is not in subspace  $ S^2\langle x_1, x_2 \rangle$, and let $z$ be an element in $H^0(-3K_X)$ which is not in the subspace $S^3\langle x_1, x_2 \rangle \oplus \langle x_1 y, x_2 y\rangle$. By comparing the dimension of $H^0(-mK_X)$ and subspace generated by monomials in $x_1, x_2, y, z$ we obtain a the relation $z^2 + a_0 y^3 + a_2 y^2 + a_4 y + a_6 $ in degree 6. For example by comparing the Hilbert series,  we conclude that there are no other relations for any $m > 6$.
  Thus  the anti-canonical ring of a del Pezzo surface $\bar{X}$ is
  \[
    R(\bar{X}, -K_X) \isom \IC[x_1, x_2, y, z]/(f_6)
  \]
  with variables of degrees $(1, 1, 2, 3)$ and $f_6 = z^2 + a_0 y^3 + a_2 y^2 + a_4 y + a_6$.
  
  The last statment follows immediately.
  \end{proof}

Note that by the $\chi$-condition, given a pair $(\bar X, \bar D)$ as above, an
involution $\tau$ on $\bar D^\nu$ defines a Gorenstein stable surface with
$\chi(X) = 2$ if and only if it satisfies the Gorenstein gluing condition and
the resulting curve $D$ has arithmetic genus $1$. 

For simplicity, we restrict to the case where $\bar D$ is smooth, so that the
Gorenstein gluing condition is automatically satisfied, and we are looking for
curves $\bar D$ of genus $2$ which admit an elliptic involution. 
\begin{lem}\label{lem: genus 2 elliptic involution}
  Let $\bar D$ be a smooth curve of genus $2$ admitting an elliptic involution $\tau$, that is, $\bar D/\tau$ is an elliptic curve.

  Then decomposing pluricanonical forms into $\tau^*$-eigenspaces allows to choose generators of  the canonical ring such that
  \[ R(\bar D, K_{\bar D}) = \IC[x_1, x_2, z]/(z^2 + a_6),\]
  where $x_1, x_2, z$ have degrees $1,1,3$ respectively,
  \begin{equation}
    \label{eq: genus 2 with elliptic involution}
    a_6 = -(x_1^2 - \lambda_1x_2^2)(x_1^2 - \lambda_2x_2^2)(x_1^2 - \lambda_3x_2^2), \quad \lambda_i \in \IC^*, \end{equation}
\end{lem}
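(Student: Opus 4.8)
The plan is to combine the standard description of the canonical ring of a genus $2$ curve with the eigenspace decomposition of $\tau^*$ promised in the statement. Recall (as in the previous lemma, and classically for any smooth genus $2$ curve) that $|K_{\bar D}|$ realises $\bar D$ as a double cover $\phi\colon \bar D \to \IP^1$ branched over $6$ distinct points, and that for any basis $x_1,x_2$ of $H^0(K_{\bar D})$ and a suitable $z\in H^0(3K_{\bar D})$ the canonical ring is $\IC[x_1,x_2,z]/(z^2-f_6)$ with $\deg x_i=1$, $\deg z=3$, where $f_6$ is a binary sextic form whose $6$ \emph{distinct} roots are the branch points. It therefore suffices to choose the generators adapted to $\tau$ and to show that in those coordinates $f_6$ is a product of factors $x_1^2-\lambda_i x_2^2$ with $\lambda_i\neq 0$.

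First I would analyse $\tau^*$ on $H^0(K_{\bar D})\isom\IC^2$. Being an involution, it is diagonalisable with eigenvalues in $\{+1,-1\}$. If $\tau^*$ were scalar it would act trivially on the target $\IP^1$ of the canonical map, so that $\phi\circ\tau=\phi$ and $\tau$ would be a nontrivial deck transformation of the degree $2$ cover $\phi$, i.e. the hyperelliptic involution, whose quotient is $\IP^1$; this contradicts that $\bar D/\tau$ is elliptic. Hence $\tau^*$ has the two distinct eigenvalues $+1$ and $-1$, and I may choose the generators $x_1,x_2$ to be eigenvectors, $\tau^*x_1=x_1$ and $\tau^*x_2=-x_2$. (The $+1$-eigenvector is also seen directly by pulling back the nonzero holomorphic form on $E=\bar D/\tau$.)

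Next I would pin down the action on $z$. Writing $\tau^*z=cz+q(x_1,x_2)$ with $c\in\IC$ and $q$ a cubic, the relation $(\tau^*)^2=\mathrm{id}$ forces $c^2=1$ (compare the $z$-coefficients in $\tau^*(cz+q)=c^2z+cq+q(x_1,-x_2)=z$), and applying $\tau^*$ to $z^2=f_6$ gives $(cz+q)^2=f_6(x_1,-x_2)$; comparing $z$-components in the free decomposition $R(\bar D,K_{\bar D})=\IC[x_1,x_2]\oplus z\,\IC[x_1,x_2]$ yields $2cq=0$, so $q=0$ since $c\neq 0$. Thus $\tau^*z=\pm z$, and the relation becomes $f_6(x_1,-x_2)=f_6(x_1,x_2)$, i.e. $f_6$ is even in $x_2$; being a sextic in two variables it is a cubic form in $x_1^2$ and $x_2^2$.

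Finally I would factor $f_6$. Since $\bar D$ is smooth, $f_6$ has $6$ distinct roots, and since it is even in $x_2$ neither $[1:0]$ nor $[0:1]$ can be a root: such a root would automatically be a double root, as the expansion of $f_6$ in the relevant chart is even in the local coordinate. Consequently the coefficients of $x_1^6$ and of $x_2^6$ are nonzero, the six roots occur in pairs $[\pm\mu_i:1]$ with $\mu_i\neq 0$, and $f_6=c_0\prod_{i=1}^3(x_1^2-\lambda_i x_2^2)$ with $c_0\in\IC^*$ and $\lambda_i=\mu_i^2\in\IC^*$ pairwise distinct. Rescaling $z$ by $\sqrt{c_0}$ turns the relation into $z^2+a_6=0$ with $a_6=-\prod_{i=1}^3(x_1^2-\lambda_i x_2^2)$, as claimed. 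The only genuinely geometric input is the eigenvalue computation of the second paragraph; everything else is bookkeeping in the canonical ring, so I expect the main (and only mild) obstacle to be the clean justification that $\tau^*$ is non-scalar on $H^0(K_{\bar D})$.
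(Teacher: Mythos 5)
Your proof is correct, and it takes a genuinely different route from the paper's. The paper works on the quotient: writing $\pi\colon\bar D\to D=\bar D/\tau$, it uses $\pi_*\ko_{\bar D}=\ko_D\oplus L^{-1}$ and $K_{\bar D}=\pi^*(K_D+L)=\pi^*L$ with $\deg L=1$, so that the projection formula gives the $\tau^*$-eigenspace decomposition $H^0(mK_{\bar D})\isom H^0(D,mL)\oplus H^0(D,(m-1)L)$ in every degree at once; choosing eigenvector generators, invariance of the relation then dictates the shape of $a_6$. You instead stay on $\bar D$: you start from the classical Weierstrass presentation $\IC[x_1,x_2,z]/(z^2-f_6)$, prove that $\tau^*$ is non-scalar on $H^0(K_{\bar D})$ by the deck-transformation argument, and pin down the action on $z$ by the computation in the free module $\IC[x_1,x_2]\oplus z\,\IC[x_1,x_2]$, after which evenness of $f_6$ plus smoothness (six distinct roots, none at $[1:0]$ or $[0:1]$ since evenness would force such a root to be double) yields the factorization with $\lambda_i\in\IC^*$. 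What the paper's approach buys is the identification of the eigenspaces with section spaces on the elliptic quotient, which is implicitly reused in the surrounding argument; what yours buys is self-containedness and a clean justification of a point the paper glosses over, namely that the completed-square generator $z$ is automatically a $\tau^*$-eigenvector (your $q=0$ step). One further remark: you leave the sign in $\tau^*z=\pm z$ open, which is harmless for the quoted statement because only $z^2$ enters the relation; this caution is in fact warranted, since for the canonical action an elliptic involution acts by $-1$ on $H^0(3K_{\bar D})/S^3H^0(K_{\bar D})$ (in the Weierstrass model $z=(dx)^{\otimes 3}/y^{2}$ and $\tau\colon(x,y)\mapsto(-x,y)$ give $\tau^*z=-z$), so the paper's trailing assertion that $\tau$ acts by $(x_1,x_2,z)\mapsto(-x_1,x_2,z)$ needs a sign adjustment, whereas your argument is unaffected.
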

and $\tau$ acts via $(x_1, x_2, z)\mapsto (-x_1, x_2, z)$.
\begin{proof}
  If $\pi \colon \bar D \to \bar D/\tau=D$, then $\pi_*\ko_{\bar D} = \ko_D \oplus L$ and $K_{\bar D} = \pi^*(K_D +L) = \pi^*L$ for a line bundle $L$ of degree $1$ on $D$. 
    We get a decomposition into invariant and anti-invariant subspaces 
    \[ H^0(mK_{\bar D}) = H^0(D, \pi_*\pi^*L) \isom H^0(mL) \oplus H^0((m+1)L)\] by the projection formula. Choosing the generators such that the action of $\tau$ is as given, the form of the equation is mandated by the requirement that it be $\tau$-invariant. 
\end{proof}

\begin{prop}
  Let $(\bar X, \bar D, \tau)$ be the triple corresponding to a Gorenstein stable surface $X$ with $K_X^2 = 1$ and $\chi(X) = 2$ and such that $\bar X$ is of type \casedP.

  Then there exists $a_6$ as in \eqref{eq: genus 2 with elliptic involution} and $f_6$ as in \eqref{eq: del Pezzo} such that the inclusion map is induced by
  \[\IC[x_1, x_2, y, z] /(f_6) \to \IC[x_1, x_2, z] /(z^2 + a_6)\]
  and $\tau$ acts as in Lemma \ref{lem: genus 2 elliptic involution}
\end{prop}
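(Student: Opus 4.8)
The plan is to glue together the two coordinate descriptions already at hand: the presentation of $R(\bar X,-K_{\bar X})$ coming from the anti-canonical ring of the del Pezzo surface and the $\tau$-adapted presentation of $R(\bar D,K_{\bar D})$ from Lemma \ref{lem: genus 2 elliptic involution}. The whole point is to choose the generators $x_1,x_2,y,z$ on $\bar X$ so that their restrictions to $\bar D$ are exactly the generators produced by that lemma.

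First I would invoke the anti-canonical ring computation above to write $R(\bar X,-K_{\bar X})\isom \IC[x_1,x_2,y,z]/(f_6)$ with $f_6$ as in \eqref{eq: del Pezzo}, and then arrange $\bar D=\{y=0\}$. This is legitimate even though $\bar D$ need not be generic: a defining section of $\bar D$ lies in the subspace $S^2\langle x_1,x_2\rangle\subset H^0(-2K_{\bar X})$ only if $\bar D$ is the preimage of a length-$2$ divisor on $\IP^1$ under the anti-canonical pencil $|-K_{\bar X}|$, hence reducible or non-reduced; since $\bar D$ is smooth irreducible of genus $2$ this is impossible, so $y$ may be taken to be its equation. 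Restriction then yields the surjection onto $R(\bar D,K_{\bar D})=\IC[x_1,x_2,z]/(z^2+a_6)$, where $K_{\bar D}=-K_{\bar X}|_{\bar D}$ by adjunction.

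Next I would use the involution to normalise the coordinates. The $\chi$-condition forces $p_a(D)=1$, so $D=\bar D/\tau$ is elliptic and Lemma \ref{lem: genus 2 elliptic involution} applies. The restriction $\langle x_1,x_2\rangle\to H^0(K_{\bar D})$ is injective, since its kernel is $H^0(-K_{\bar X}-\bar D)=H^0(K_{\bar X})=0$ ($K_{\bar X}$ being anti-ample), and therefore an isomorphism of two-dimensional spaces; I would take $x_1,x_2$ to be the pullbacks of a $\tau^*$-eigenbasis, with eigenvalues $-1$ and $+1$. It then remains to check that the restriction $\bar z$ of $z$ is $\tau$-invariant. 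This follows from a dimension count: the invariant subspace of $H^0(3K_{\bar D})$ is three-dimensional, whereas the invariant degree-$3$ monomials $x_1^2x_2$ and $x_2^3$ span only a two-dimensional subspace of $\IC[x_1,x_2]_3$, so the extra generator $\bar z$ must lie in the invariant part. Consequently $a_6=-\bar z^2$ is $\tau$-invariant, i.e.\ even in $x_1$, hence a cubic form in $x_1^2,x_2^2$; smoothness of $\bar D=\{z^2+a_6=0\}$ makes its six branch points distinct, so the three factors have distinct nonzero roots, which after rescaling $z$ is exactly the factored form \eqref{eq: genus 2 with elliptic involution}.

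With these choices the inclusion $\bar\iota\colon\bar D\hookrightarrow\bar X$ from \eqref{diagr: pushout} induces precisely the ring map $\IC[x_1,x_2,y,z]/(f_6)\to\IC[x_1,x_2,z]/(z^2+a_6)$ sending $y\mapsto 0$, and $\tau$ acts on the target as in Lemma \ref{lem: genus 2 elliptic involution}, which is the assertion. I expect the only real obstacle to be the coordinate-matching in the third paragraph, and within it the verification that the cubic generator $z$ restricts to a $\tau$-invariant section of $\bar D$; the rest is bookkeeping with the normal form of $f_6$ and the restriction map.
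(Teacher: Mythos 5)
Your route is the intended one---the paper's own proof is literally ``follows immediately from the above lemma''---and your first two paragraphs supply the bookkeeping that makes this rigorous, including the worthwhile observation that a smooth $\bar D$ is never cut out by an element of $S^2\langle x_1,x_2\rangle$, so that $\bar D=\{y=0\}$ can be arranged without the genericity hypothesis of the preceding lemma. The gap is in the third paragraph, at the one step carrying the actual content: the claim that the restriction $\bar z$ of $z$ is $\tau$-invariant. Your dimension count is correct but proves something weaker. Since the anti-invariant part of $H^0(3K_{\bar D})$ is $2$-dimensional, it coincides with the span of $x_1^3$ and $x_1x_2^2$; hence the anti-invariant component of $\bar z$ is a cubic in $x_1,x_2$, i.e.\ $\bar z$ is invariant only \emph{modulo} $S^3\langle x_1,x_2\rangle$. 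Nothing in the count forces that component to vanish: writing $w$ for the invariant generator of Lemma \ref{lem: genus 2 elliptic involution}, the element $w+x_1^3$ is a perfectly good ``extra generator'' (it spans a complement of the image of $S^3H^0(K_{\bar D})$) yet is not a $\tau^*$-eigenvector at all. Since the evenness of $a_6=-\bar z^{\,2}$ in $x_1$---the whole point of the proposition---is deduced from this invariance, the argument as written does not close.

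The repair uses an input you have on the table but never invoke at this step: the normal form \eqref{eq: del Pezzo}. Restricting $f_6$ to $\{y=0\}$ gives $\bar z^{\,2}=-a_6(x_1,x_2)$, so $\bar z^{\,2}$ lies in the subring $\IC[x_1,x_2]\subset R(\bar D,K_{\bar D})$. Now $R(\bar D,K_{\bar D})$ is a free $\IC[x_1,x_2]$-module with basis $1,w$, so write $\bar z=\lambda w+c(x_1,x_2)$; the $w$-component of $\bar z^{\,2}$ equals $2\lambda c$ and must vanish. Moreover $\lambda\neq 0$: otherwise $\bar z\in\IC[x_1,x_2]$, so $z$ would lie in $S^3\langle x_1,x_2\rangle+ y\cdot H^0(-K_{\bar X})$ (the kernel of restriction in degree $3$ being $y\cdot H^0(-K_{\bar X})$), contradicting that $z$ is a generator. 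Hence $c=0$, $\bar z=\lambda w$ is $\tau$-invariant, and $a_6$ equals $\lambda^2$ times the sextic of Lemma \ref{lem: genus 2 elliptic involution}; rescaling $z$ gives exactly \eqref{eq: genus 2 with elliptic involution}, and your concluding smoothness remark about distinct nonzero roots then finishes the proof. Equivalently: degree-$3$ generators putting the relation of a genus-$2$ canonical ring in the form $z^2+(\text{sextic in }x_1,x_2)$ are unique up to scalar, so $\bar z$ must be proportional to $w$.
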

\begin{proof}
  Follows immediately from the above lemma.
\end{proof}

We can now write down a family containing the surfaces discussed above as an open subset of:
\[\kw  = \left\{ (a_0, a_2, a_4, a_6)\in \IC[x_1, x_2] \mid a_6 \text{ as in \eqref{eq: genus 2 with elliptic involution}}\right\}\]

\begin{prop}
  The subset of $ \bar{\mathfrak{M}}^{(Gor)}_{1,2}$ parametrising surfaces with normalisation $(dP)$ or $(E_-)$ is irreducible of  dimension 10.
\end{prop}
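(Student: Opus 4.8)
The plan is to realise the stratum as the image of the family $\kw$ under the classifying map to $\bar{\mathfrak{M}}^{(Gor)}_{1,2}$, and to compute this image as a quotient of $\kw$ by the group of admissible coordinate changes; the case $(E_-)$ is included as the locus of special parameters for which $\bar X$ acquires its elliptic singularity, so it sits inside the same family $\kw$ as a degeneration of $(dP)$. First I would record that $\kw$ is irreducible of dimension $12$: the constant $a_0$ contributes one parameter, the binary forms $a_2$ and $a_4$ of degrees $2$ and $4$ contribute $3$ and $5$ parameters, and $a_6$ as in \eqref{eq: genus 2 with elliptic involution} is determined by the three roots $\lambda_1,\lambda_2,\lambda_3\in\IC^*$, hence ranges over a $3$-dimensional irreducible variety, so $\dim\kw = 1+3+5+3 = 12$ and $\kw$ is irreducible as a product of irreducible varieties. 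By the preceding proposition every surface of this type (with $\bar D$ general) arises from a point of $\kw$, so the stratum is the image of a dense open subset of $\kw$ and is therefore irreducible.

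Next I would determine when two points of $\kw$ yield isomorphic surfaces. By the reconstruction of $X$ from $(\bar X,\bar D,\tau)$ any such isomorphism is induced by a graded automorphism $\phi$ of $\IC[x_1,x_2,y,z]$ (degrees $1,1,2,3$) preserving the normal form \eqref{eq: del Pezzo}, the divisor $\bar D=\{y=0\}$, and the involution $\tau$. Compatibility with $\tau\colon(x_1,x_2,z)\mapsto(-x_1,x_2,z)$ forces $\phi$ to respect the eigenspace decomposition, so the linear part is diagonal, $x_1\mapsto\gamma x_1$, $x_2\mapsto\delta x_2$; preserving $\bar D=\{y=0\}$ forces $y\mapsto\alpha y$ with no added quadratic in $x$; and the absence in \eqref{eq: del Pezzo} of a term linear in $z$ forbids adding $yL(x)+C(x)$ to $z$, leaving $z\mapsto\beta z$. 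Finally, keeping the leading coefficient of $a_6$ equal to $-1$ imposes $\beta^2=\gamma^6$. Thus the admissible coordinate changes form the $3$-dimensional group $G=\{(\alpha,\gamma,\delta,\beta)\in(\IC^*)^4 : \beta^2=\gamma^6\}$, acting on $(a_0,a_2,a_4,\lambda_i)$ by $a_0\mapsto\alpha^3\gamma^{-6}a_0$, $a_j\mapsto\alpha^{j/2}\gamma^{-6}a_j(\gamma x_1,\delta x_2)$ for $j=2,4$, and $\lambda_i\mapsto(\delta^2/\gamma^2)\lambda_i$.

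To finish I would compute the orbit dimension. The weighted Euler subgroup $(\alpha,\gamma,\delta,\beta)=(t^2,t,t,t^3)$ lies in $G$ and acts trivially on $\kw$, since each $a_i$ is homogeneous of the matching weight; hence the effective action is by the $2$-dimensional quotient $G/\IC^*$. A direct check---imposing that a general point $(a_0,a_2,a_4,a_6)$ be fixed, which already from $a_0$ and the three coefficients of $a_2$ pins down $\delta=\gamma$, $\alpha=\gamma^2$, $\beta=\gamma^3$---shows the stabiliser in $G$ of a general point is exactly this Euler $\IC^*$, so the general orbit is $2$-dimensional. Therefore the general fibre of $\kw\to\bar{\mathfrak{M}}^{(Gor)}_{1,2}$ is $2$-dimensional and the stratum, being the irreducible image of $\kw$, is irreducible of dimension $12-2=10$.

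I expect the main obstacle to be the precise determination of $G$ together with the orbit computation: one must verify that no further coordinate change compatible with the chosen normal form exists (in particular no shear $z\mapsto\beta z+yL+C$ and no translation $y\mapsto\alpha y+Q$), and that the generic stabiliser is exactly the trivially-acting Euler $\IC^*$, so that the general orbit really has dimension $2$ rather than being smaller. Once this is established the irreducibility and the dimension count follow formally from $\dim\kw=12$.
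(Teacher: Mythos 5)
Your proposal is correct and takes essentially the same route as the paper: the paper likewise dominates the stratum by the $12$-dimensional family $\kw$ and subtracts the dimension of the residual coordinate changes, which it describes as a $(\IC^*)^2$ acting on $x_2$ and $y$ after normalising the coefficients of $z^2$ and $x_1^6$ --- exactly your $G$ modulo the trivially acting Euler $\IC^*$ (together with the Bertini involution $z\mapsto -z$). Your extra verifications (diagonality of the linear part, absence of shears, identification of the generic stabiliser) only make explicit what the paper asserts implicitly, and your two cosmetic slips --- the exponent should be $\alpha^{(6-j)/2}$ rather than $\alpha^{j/2}$, and the generic stabiliser also contains the finite factor generated by $z\mapsto -z$ --- do not affect the count $12-2=10$.
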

\begin{proof} Let $R = \IC[x_1, x_2]$. Then this subset is dominated by an open subset of
  \[
    \kw = \left\{(a_0, a_2, a_4, a_6) \in R_0 \times R_2 \times R_4 \times R_6 \mid  a_6\  \text{as in } \eqref{eq: genus 2 with elliptic involution}  \right\},
  \]
  which is of dimension $1+3+5+3 = 12$. The choices made in the above construction fix the coordinates up to multiplication with non-zero numbers. In addition, we fixed the coefficient in front of $z^2$ to be $1$ as in \eqref{eq: del Pezzo} and the coefficient in front of $x_1^6$ in $a_6$ to be $1$ as in \eqref{eq: genus 2 with elliptic involution}. Thus  we have a remaining action of ${\IC^*}^2$ by multiplication on $x_2$ and $y$, and the dimension of the stratum is $12-2 = 10$.
\end{proof}

\subsection{Case ($E_{+}$)}
\label{sect: caseE}

Assume that $X$ is a Gorenstein stable surface with $K^2_X = 1$ and $\chi(\ko_X)=2$ and normalisation $\bar X = S^2E$ for an elliptic curve $E$. Using the notation from \eqref{diagr: pushout} we recall some facts from \cite{FPR15a}.

Consider
\[
  \begin{tikzcd}
    E\times E \rar{\sigma} \arrow{dr}{\oplus} & S^2 E\dar{\mathrm{alb}}\\ & E
  \end{tikzcd}.\]
Then $\mathrm{alb}$ is a $\IP^1$-bundle with section $C_0= \{(0, p)\in S^2E\mid p\in E\}$ and the fibre over $0$ is given by $F = \{ (p,-p)\in S^2E\mid p \in E\}$.
The canonical bundle is then $K_{S^2E} = -2C_0 +F$ and the conductor is a nodal curve of arithmetic genus $2$
\[ \bar D \in |3C_0 -F| = |C_0 - K_{S^2E}|.\]
By the $\chi$-condition, the conductor in $X$ has genus $0$ and thus $\bar D \to D$ is the canonical map induced by the hyperelliptic involution (if $\bar D$ is smooth or at least irreducible).

For the sake of completeness we first recall an observation from \cite[Rem.~5.3]{FPR17}.
\begin{lem}
  Let $E$ be an elliptic curve and $S^2E$ be its symmetric square.
  \begin{enumerate}
  \item There exist non-normal Gorenstein stable surfaces $X$ with $K^2_X = 1$ and $\chi(\ko_X)=2$ and normalisation $S^2E$.
  \item If $X$ is such a surface then the bicanonical map $X \to \IP^2$ is not a Galois cover.
  \end{enumerate}
\end{lem}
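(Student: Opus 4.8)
The plan is to handle the two assertions separately: existence is a short gluing construction, while the non-Galois statement is the substantive claim and is best proved by passing to the normalisation and comparing irregularities.

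For (1) I would produce the Koll\'ar gluing datum by hand. Take $\bar D\in|3C_0-F|$ a general member; by adjunction $p_a(\bar D)=2$, and a general member of this pencil is smooth (Bertini, the base points being reduced), so $(\bar X,\bar D)=(S^2E,\bar D)$ is a pair of type $(E_+)$ as in the List \ref{list} and thereby satisfies the lc-pair and $K_X^2$ conditions. A smooth genus-$2$ curve is hyperelliptic, so I take $\tau$ to be its hyperelliptic involution; since $\bar D$ is smooth it has no nodes and the Gorenstein-gluing condition is vacuous. Finally the $\chi$-condition $\chi(D)=2-\chi(\bar X)+\chi(\bar D)$ reads $1=2-0+(-1)$, which holds because $D=\bar D/\tau\isom\IP^1$, $\chi(\ko_{S^2E})=0$ and $\chi(\ko_{\bar D})=1-p_a(\bar D)=-1$. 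Koll\'ar's gluing \cite{Kollar2013} then yields the desired $X$; alternatively one may simply invoke \cite[Rem.~5.3]{FPR17}.

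For (2) I argue by contradiction. If $\varphi\colon X\to\IP^2$ is Galois then, since it has degree $4$ by Corollary \ref{cor: bicanonical section defines a quadruple cover}, its deck group $\Aut(X/\IP^2)$ has order $4$; by Proposition \ref{prop: aut} the only possibility is $\Aut(X/\IP^2)=(\IZ/2)^2$, the group $\IZ/4$ being excluded there. This action lifts uniquely to the normalisation $\pi\colon\bar X=S^2E\to X$ and is $\pi$-equivariant, so the induced finite birational map $S^2E/(\IZ/2)^2\to X/(\IZ/2)^2=\IP^2$ has normal source and is therefore an isomorphism. Hence $f\colon S^2E\to\IP^2$ is itself a $(\IZ/2)^2$-Galois cover. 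Its eigensheaf decomposition writes $f_*\ko_{S^2E}=\bigoplus_\chi L_\chi^{-1}$, a sum over the four characters $\chi$ of $(\IZ/2)^2$, each summand being a rank-one reflexive and hence invertible sheaf on the smooth surface $\IP^2$. Consequently $q(S^2E)=h^1(\ko_{S^2E})=\sum_\chi h^1(\IP^2,L_\chi^{-1})=0$, since every line bundle on $\IP^2$ has vanishing $H^1$. But $S^2E$ is ruled over the elliptic curve $E$ via its Albanese (the addition map $S^2E\to E$), so $q(S^2E)=1$, a contradiction; therefore $\varphi$ is not Galois.

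The hard part is really the bookkeeping of (2): justifying that the deck action descends to an honest $(\IZ/2)^2$-cover structure on $S^2E$ (functoriality of normalisation, together with the fact that a finite birational morphism onto a normal variety is an isomorphism) and that the eigensheaf splitting into \emph{line} bundles is legitimate (the rank-one reflexive eigensheaves are invertible only because $\IP^2$ is regular). Once these are in place, the vanishing $h^1(\IP^2,L_\chi^{-1})=0$ and the value $q(S^2E)=1$ finish the argument at once. The only non-formal input for (1) is the smoothness of a general member of $|3C_0-F|$, which can in any case be sidestepped by citing \cite{FPR17}.
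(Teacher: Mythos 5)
Your proof of (1) coincides with the paper's: a general smooth $\bar D\in|3C_0-F|$ together with its hyperelliptic involution, fed into Koll\'ar's gluing as in Section \ref{ssec: glue}; your explicit check of the four gluing conditions is just a fuller write-up of the same construction. For (2) you are correct but take a genuinely different route. Both arguments share the first reduction: degree $4$ plus Proposition \ref{prop: aut} rules out $\IZ/4$, so a Galois bicanonical map would exhibit $X$ as a bi-double cover, and the $(\IZ/2)^2$-cover structure passes to the normalisation $S^2E\to\IP^2$ (the paper quotes \cite[Rem.~5.3]{FPR17} for this step; you reprove it via functoriality of normalisation and the fact that a finite birational morphism onto a \emph{normal} variety is an isomorphism --- which is the correct statement, as in your closing paragraph; the phrase ``normal source'' in the body is a slip, harmless here since the target $\IP^2$ is normal). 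The contradictions then diverge. The paper observes that the canonical divisor of an abelian cover of $\IP^2$ is the pullback of some $\ko_{\IP^2}(d)$, hence ample, anti-ample or trivial, while $K_{S^2E}=-2C_0+F$ satisfies $K^2=0$ yet is not numerically trivial. You instead use the character decomposition $f_*\ko_{S^2E}=\bigoplus_\chi L_\chi^{-1}$ into line bundles (legitimately, as you note: the eigensheaves are rank-one reflexive, hence invertible on the smooth $\IP^2$), and the vanishing $H^1(\IP^2,\ko_{\IP^2}(d))=0$ for all $d$ then forces $q(S^2E)=0$, contradicting $q(S^2E)=1$ coming from the Albanese fibration over $E$. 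Your version is self-contained and actually proves more --- no surface of positive irregularity can be an abelian cover of $\IP^2$ at all --- whereas the paper's is shorter given the cited remark and isolates a different obstruction, namely the shape of the canonical class of an abelian cover. Both arguments are sound.
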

\begin{proof}
  \begin{enumerate}
  \item A general element $\bar D\in |3C_0-F|$ is smooth and, denoting by $\tau$ the hyperelliptic involution, the triple $(S^2E, \bar D, \tau)$ defines a surface with the required invariants by Koll\'ar's gluing theorem as explained in Section \ref{ssec: glue}.
  \item As in \cite{FPR17} Remark 5.3, a normalisation of a bi-double cover is again a bi-double cover, the canonical divisor of $\bar{X}$ is pullback of some $\ko_{\IP^2}(d)$ and thus either ample, anti-ample or trivial. Thus no bi-double cover can have a normalisation of type $E_+$. \qedhere
  \end{enumerate}
\end{proof}
Because of the second statement, a concrete (algebraic) description of an example could not be found in \cite{FPR17}. We will now give such a description yielding in fact a complete family parametrising an open subset of the stratum of surfaces in $\bar\gothM_{1,2}$ with normalisation the symmetric square of an elliptic curve.

We would like to compute the canonical ring of $X$ which is based on the following result of Koll\'ar:
\begin{prop}[\protect{\cite[Prop. 5.8]{Kollar2013}}]\label{prop: Kollar} Let $X$ be a
  Gorenstein stable surface. Define the different $\Delta = \text{Diff}_{\bar
    D^{\nu}}(0)$ by the equality $(K_{\bar D} + \bar D)|_{\bar D} = K_{\bar D} +
  \Delta$. 

  Then a section $s \in H^0(\bar X, m(K_{\bar X} + \bar D))$ descends to a section in $H^0(X,mK_X)$ if and only if the image of s in $H^0(\bar D^{\nu}, m(K_{\bar X} + \bar D))$ under the Residue map is $\tau$- invariant if  $m$ is even respectively $\tau$- anti invariant if $m$ is odd.
\end{prop}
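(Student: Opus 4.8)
The plan is to deduce Koll\'ar's descent criterion from the adjunction formula for the normalisation together with a local computation of the Poincar\'e residue across the double locus; the sign $(-1)^m$ will emerge from that local model. First I would record the global input: since $X$ is Gorenstein and demi-normal, conductor adjunction gives $\pi^*K_X = K_{\bar X}+\bar D$, with $\bar D$ exactly the conductor, so that $\pi^*\omega_X^{\otimes m}\isom \ko_{\bar X}(m(K_{\bar X}+\bar D))$. Pullback therefore embeds $H^0(X,mK_X)\into H^0(\bar X, m(K_{\bar X}+\bar D))$, and the entire content of the proposition is to identify the image of this embedding.

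Second, I would describe that image via the pushout square \eqref{diagr: pushout}. Reconstructing $X$ as the gluing of $\bar X$ along $\bar\nu\colon\bar D^\nu\to\bar D$ by the involution $\tau$, a section on $\bar X$ descends to $X$ precisely when its restriction to the conductor is constant along the fibres of $\bar D^\nu\to D^\nu$, that is, compatible with $\tau$. The relevant restriction is the Poincar\'e residue, which by adjunction lands in $H^0(\bar D^\nu, m(K_{\bar D^\nu}+\Delta))$; here the different $\Delta$ is precisely the divisor making the residue regular at the preimages of the nodes of $\bar D$, and the Gorenstein gluing condition (that $\tau$ be fixed-point free on those preimages) is what makes the comparison across the two branches meaningful at the nodes.

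Third --- and this is the crux --- I would pin down the sign by a local model. Near a generic point of $D$ the surface is the normal crossing $\{xy=0\}$, with $\bar X$ the two local branches $\{x=0\}$ (coordinates $y,w$) and $\{y=0\}$ (coordinates $x,w$) meeting along $D=\{x=y=0\}$, and $\tau$ interchanging the branches while fixing the coordinate $w$ on $D$. The dualizing sheaf $\omega_X$ is generated there by the pair $\bigl(\tfrac{dy}{y}\wedge dw,\,-\tfrac{dx}{x}\wedge dw\bigr)$, whose residues along $D$ are $dw$ and $-dw$, hence opposite. Taking the $m$-th tensor power gives residues $dw^{\otimes m}$ and $(-1)^m\,dw^{\otimes m}$, so the residue of a section of $\omega_X^{\otimes m}$, read on $\bar D^\nu$, is interchanged by $\tau$ up to the factor $(-1)^m$: invariant for $m$ even, anti-invariant for $m$ odd.

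Finally I would globalise: a section $s\in H^0(\bar X, m(K_{\bar X}+\bar D))$ lies in $\pi^*H^0(X,mK_X)$ if and only if its residue satisfies $\tau^*\mathrm{Res}(s)=(-1)^m\mathrm{Res}(s)$, which is exactly the stated criterion. I expect the main obstacle to be the interface between the second and third steps: making the reduction \emph{descends $\iff$ compatible on the conductor} precise where $X$ fails to be normal, and justifying the sign $(-1)^m$ \emph{intrinsically} rather than merely in the chosen coordinates. For the latter one must check that the local generator of $\omega_X$ and of its powers is independent of the trivialisations used, so that the opposite-residue relation --- and hence the canonical factor $(-1)^m$ --- is well defined globally along $\bar D^\nu$.
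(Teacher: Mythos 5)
The paper does not prove this proposition at all: it is imported verbatim (with some typos in the statement) from Koll\'ar's book \cite[Prop.~5.8]{Kollar2013}, so there is no ``paper proof'' to compare against. Your reconstruction --- conductor adjunction $\pi^*K_X=K_{\bar X}+\bar D$, reduction of the descent question to the conductor via the pushout square, and the local normal-crossing model $\{xy=0\}$ in which the generator of $\omega_X$ has opposite residues $dw$ and $-dw$ on the two branches, whence the factor $(-1)^m$ on $m$-th powers --- is exactly the standard argument behind Koll\'ar's result, and it is correct in outline; the one step you flag but leave open (reducing descent to the generic points of $D$) is handled by the $S_2$ property of $X$ and of the sheaves $\omega_X^{[m]}$, which lets you check equality of sections in codimension one.
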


To compute the canonical ring of $X$, we need to compute the ring of sections
\[R(S^2E, K_{S^2E}+\bar D) = R(S^2 E, C_0),\] the residue map to $R(\bar D,
K_{\bar D}) = R(\bar D, C_0|_{\bar D})$ including the action of the
hyperelliptic involution. The strategy is to pull back to $E\times E$ and then
to take invariants under the involution exchanging the factor. To simplify
notation we add indices to the factors $E\times E = E_1\times E_2$.

We consider the geometric situation
\begin{equation}\label{diagr: pushout S2E}
  \begin{tikzcd}
    E_1\times E_2\dar{\sigma}\rar[hookleftarrow]{\tilde \iota} & \tilde D\dar\\
    S^2E \dar{\pi}\rar[hookleftarrow]{\bar\iota} & \bar D\dar{\pi}[swap]{/\tau}
    \\
    X\rar[hookleftarrow]{\iota} &D
  \end{tikzcd}
\end{equation}

For linear series or spaces of sections on $E_1\times E_2$ we denote the invariant part under the involution interchanging the factors by a superscript ${}^+$. The next result is immediate.
\begin{lem}
  With the above notation we have
  \begin{gather*}
    \sigma^*C_0 = E_1\times\{0\} + \{0\} \times E_2,\\
    \sigma^*F = \Delta^- = \{(p, -p)\in E\times E\mid p\in E\} ,\\
    \sigma^*\bar D = \tilde D \in \sigma^*|3C_0-F| = |3\sigma^*C_0 -\Delta^-|^+.
  \end{gather*}
\end{lem}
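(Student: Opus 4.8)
The plan is to recognise $\sigma$ as the quotient of $E_1\times E_2$ by the involution $\iota\colon(a,b)\mapsto(b,a)$ interchanging the two factors, so that $\sigma$ is a finite double cover whose ramification (and hence, downstairs, branch) locus is the diagonal $\Delta=\{(p,p)\}$, the fixed locus of $\iota$. With this description each of the three displayed identities is obtained by first computing the reduced preimage set-theoretically and then determining the multiplicities in the divisorial pullback.

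For the first line I would compute $\sigma^{-1}(C_0)$. Since $C_0$ parametrises the unordered pairs containing $0$, its preimage is exactly the ordered pairs $(a,b)$ with $0\in\{a,b\}$, that is $(E_1\times\{0\})\cup(\{0\}\times E_2)$. These two curves are exchanged by $\iota$ and neither is contained in the diagonal, so $\sigma$ is étale at their generic points and each occurs with multiplicity one, giving $\sigma^*C_0=E_1\times\{0\}+\{0\}\times E_2$. For the second line, $\{a,b\}=\{p,-p\}$ is equivalent to $a+b=0$, so $\sigma^{-1}(F)$ is the antidiagonal $\Delta^-=\{(p,-p)\}$, which is irreducible and $\iota$-invariant.

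The one genuine subtlety — the step I would treat most carefully — is the multiplicity of $\Delta^-$ in $\sigma^*F$. The restriction $\sigma|_{\Delta^-}\colon\Delta^-\to F$ is two-to-one, because $\iota$ acts on $\Delta^-\isom E$ by $p\mapsto-p$, and this might suggest a spurious factor of $2$. The point to stress is that the divisorial pullback multiplicity is the ramification index of $\sigma$ \emph{transverse} to $\Delta^-$, not the degree of this restricted map. Since $\Delta^-$ meets the diagonal only in the four $2$-torsion points, its generic point is not fixed by $\iota$, so $\sigma$ is étale there and $\sigma^*F=\Delta^-$ with multiplicity one. I would spell this out explicitly to avoid the off-by-a-factor-of-two trap.

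The last line is then essentially formal. By the definition of $\tilde D$ in \eqref{diagr: pushout S2E} we have $\tilde D=\sigma^*\bar D$, and combining the two previous computations gives $\sigma^*\bar D\in|3\sigma^*C_0-\Delta^-|$. To identify the image of the pullback map with the $\iota$-invariant part, I would invoke the eigensheaf decomposition $\sigma_*\ko_{E_1\times E_2}=\ko_{S^2E}\oplus L^{-1}$ of the double cover together with the projection formula: the invariant summand of $H^0(E_1\times E_2,\sigma^*(3C_0-F))$ is precisely $H^0(S^2E,3C_0-F)$, so the $\iota$-invariant members of $|3\sigma^*C_0-\Delta^-|$ are exactly the pullbacks of members of $|3C_0-F|$, yielding $\sigma^*|3C_0-F|=|3\sigma^*C_0-\Delta^-|^+$.
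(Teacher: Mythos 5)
Your proof is correct. The paper in fact gives no argument at all --- the lemma is introduced with the words ``The next result is immediate'' --- so your write-up supplies exactly the verification the authors leave to the reader: realising $\sigma$ as the quotient by the factor-exchanging involution $\iota$, computing preimages set-theoretically, checking multiplicities via \'etaleness away from the diagonal, and identifying the invariant part by the projection formula. Your emphasis on the multiplicity of $\Delta^-$ in $\sigma^*F$ is well placed; a quick cross-check that rules out the spurious factor of $2$ is
\[
\sigma^*C_0\cdot\sigma^*F \;=\; 2\,C_0\cdot F \;=\; 2 \;=\; \left(E_1\times\{0\}+\{0\}\times E_2\right)\cdot\Delta^-,
\]
whereas $2\Delta^-$ would give $4$.

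One caution on the wording of the last step. The sentence ``the $\iota$-invariant members of $|3\sigma^*C_0-\Delta^-|$ are exactly the pullbacks of members of $|3C_0-F|$'' is not literally true: the divisor of an \emph{anti}-invariant section is also fixed by $\iota$, and here the anti-invariant eigenspace is nonzero. Indeed, since $3C_0-F$ is represented by an irreducible curve of positive self-intersection, it is big and nef, so Kawamata--Viehweg vanishing on the abelian surface gives $h^0\bigl(E_1\times E_2,\sigma^*(3C_0-F)\bigr)=\tfrac12\bigl(\sigma^*(3C_0-F)\bigr)^2=3$, while $h^0(S^2E,3C_0-F)=2$ (its image is spanned by $t_4,t_5$, see Lemma \ref{lem: what is l?}); hence there is a one-dimensional anti-invariant summand, whose unique divisor is an $\iota$-fixed member of $|3\sigma^*C_0-\Delta^-|$ containing the diagonal $\Delta$ --- and no pullback contains $\Delta$ with odd multiplicity, so it is not of the form $\sigma^*D'$. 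This does not affect the lemma, because the paper's superscript ${}^+$ denotes the invariant part of the \emph{space of sections} (projectivised), which is precisely what your eigensheaf decomposition computes; just phrase the conclusion as ``divisors of $\iota$-invariant sections'' rather than ``$\iota$-invariant divisors''.
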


We thus have  \[ H^0(S^2 E, mC_0) =  H^0(E\times E, m\sigma^* C_0)^+ \isom (H^0(E_1, m \cdot 0)\tensor H^0(E_2, m \cdot 0))^+.\]
Therefore,  if $v_1, \dots, v_m$ is a basis of $H^0(E, m\cdot 0)$ then a basis of  $(H^0(E_1, m \cdot 0)\tensor H^0(E_2, m \cdot 0))^+$ is given by
\[ (v_i\tensor v_i)_{i=1, \dots, m},\; (v_i\tensor v_j +v_j\tensor v_i)_{1\leq i<j\leq m}.\]
These are $m+ \frac{m(m-1)}2 = \frac{m(m+1)}2 = h^0(S^2 E, mC_0)  = \chi(S^2 E, mC_0)$ elements as predicted by Riemann-Roch.

We now choose for the elliptic curve a Weierstrass type equation
\begin{equation}\label{eq: Weierstrass}
f_i=y_i^2-(x_i^3+ax_iz_i^4+bz_i^6)\end{equation}
such that  $R(E_i, 0)\isom \IC[z_i,x_i,y_i]/(f_i)$ with generators in degrees $(1,2,3)$.

\begin{lem}
  The low-degree parts of $R(S^2E, C_0)$, identified with the invariant subring of $R(E_1\times E_2, \sigma^*C_0)$ are
  \begin{center}
    \begin{tabular}{cc}
    \toprule
      $m$ & Basis of $H^0(S^2E, m C_0)$ \\
      \midrule
      $1$ &$t_0 = z_1z_2$\\
      $2$ &$t_0^2 $,  $ t_1 = x_1x_2$, $t_2 = z_1^2x_2+ x_1z_2^2$\\
       $3$ & $t_0^3$, $ t_0 t_1$, $t_0 t_2$,\\
       & $ t_3 = y_1y_2$, $t_4 = z_1x_1y_2+ y_1 z_2x_2$, $t_5 = z_1^3y_2 + y_1z_2^3$ \\
      $4$ &  $t_0^4$, $t_0^2t_1$, $t_0^2t_2$, $t_0 t_3$, $t_0t_4$, $t_0t_5 $, $t_1^2$, $t_1t_2$, $t_2^2$,
      \\ & 
      $t_6 = z_1y_1x_2^2 + x_1^2z_2y_2$\\
      \bottomrule
    \end{tabular}
  \end{center}
In fact, $t_0, \dots, t_6$ generate the section ring $R(S^2E, m C_0)$.
\end{lem}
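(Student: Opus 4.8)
The plan is to identify $R(S^2E,C_0)$ with a symmetric tensor construction and then prove generation degree by degree, the higher degrees reducing to a surjectivity statement for multiplication maps.

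First I would make the identification precise. By Künneth and the description of $\sigma^*C_0$ from the previous lemma, $H^0(S^2E, mC_0) \isom \left(H^0(E, m\cdot 0)\tensor H^0(E,m\cdot 0)\right)^+ = S^2 A_m$, where I write $A_m := H^0(E, m\cdot 0)$ and $+$ for the part fixed by the factor-exchanging involution; this has dimension $\frac{m(m+1)}2$, matching Riemann--Roch. Under multiplication this exhibits $R(S^2E, C_0)$ as the invariant subring $\bigoplus_m S^2 A_m$ of the diagonal subring $\bigoplus_m A_m\tensor A_m$ of $A\tensor A$, with $A = R(E,0) \isom \IC[z,x,y]/(f)$.

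Second, the table. Using the Weierstrass relation $f$ to eliminate $y^2$, the ring $A$ has the monomial basis given by the $z^ax^by^c$ with $a+2b+3c=m$ and $c\in\{0,1\}$; in particular $A_1=\langle z\rangle$, $A_2=\langle z^2,x\rangle$, $A_3=\langle z^3, zx,y\rangle$, $A_4=\langle z^4,z^2x,x^2,zy\rangle$. A basis of $S^2 A_m$ is then given by the symmetrised products $u\tensor u$ and $u\tensor v+v\tensor u$ of these monomials, and I would expand each such vector and match it with a monomial in $t_0,\dots,t_6$; the only relation to keep in mind is $t_2^2 = (z^2\tensor x+x\tensor z^2)^2 = (z^4\tensor x^2+x^2\tensor z^4)+2t_0^2t_1$, which is how $z^4\tensor x^2+x^2\tensor z^4$ enters the degree-$4$ line. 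This reproduces the stated bases and shows that $t_0,\dots,t_6$ span in degrees $\le 4$.

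Third, generation in higher degrees: it suffices to show that for $m\ge 5$ the multiplication map $\bigoplus_{0<i<m} S^2A_i\tensor S^2A_{m-i}\to S^2A_m$ is surjective. I would argue by a descending induction over the symmetric generators $\xi = M\tensor N+N\tensor M$, with $M,N$ basis monomials of $A_m$, using two moves. If both $M$ and $N$ are divisible by $z$ then $\xi = t_0\cdot\xi'$ with $\xi'\in S^2A_{m-1}$, which is generated by the induction hypothesis; since in each degree $\geq 2$ there is a \emph{unique} $z$-free basis monomial (namely $x^{m/2}$ or $x^{(m-3)/2}y$), this reduces us to the case where one factor, say $N$, is that $z$-free monomial $N_0$, and the subcase $M=N_0$ gives $\xi = 2t_1^{b}t_3^{c}$ at once. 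In the remaining case ($z^2\mid M$, $x\mid N_0$) I would set $\xi' = (M/z^2)\tensor(N_0/x)+(N_0/x)\tensor(M/z^2)\in S^2A_{m-2}$ and compute $t_2\cdot\xi' = \xi + \eta$, where the cross-term $\eta = xP\tensor z^2Q + z^2Q\tensor xP$ (with $P=M/z^2$, $Q=N_0/x$) has $z$-exponents $a-2$ and $2$, hence strictly smaller imbalance $|a-a'|$ than $\xi$ once $a\geq 3$; the boundary cases $|a-a'|\in\{1,2\}$ are treated identically using the other ``transfer'' generators $t_4,t_5,t_6$, which move a factor $zx$, $z^3$, or $zy$ between the two tensor slots.

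I expect this third step to be the main obstacle. Unlike the situation for a high multiple of a polarisation on an abelian surface, the primitive $(1,1)$-type divisor $C_0$ forces generators in several degrees, so there is no one-line projective-normality argument, and the bookkeeping for the cross-terms---in particular choosing correctly between $t_2,t_4,t_5,t_6$ in the low-imbalance boundary cases---must be organised into a genuinely monotone induction. In practice this finite surjectivity check can equally well be confirmed by computer algebra, which is the route indicated by the Macaulay2 computation used later in this section.
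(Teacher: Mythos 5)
Your first two steps are exactly the paper's: the identification of $R(S^2E,C_0)$ with the exchange-invariant subring, the symmetrised Weierstrass-monomial basis (with $y$-exponents at most $1$), and the degree-$4$ identity $z_1^4x_2^2+x_1^2z_2^4=t_2^2-2t_0^2t_1$ all appear there in the same form. The divergence, and the problem, is in the generation step. Your $t_2$-transfer only makes progress when the $z$-exponent $a$ of $M$ satisfies $a\ge 3$, since only then is the cross-term $\eta$ divisible by $t_0$ and absorbed by the degree induction; for $a=2$ the cross-term has the same shape as $\xi$ (one $z$-free slot, one slot carrying $z^2$) and the same imbalance, so the induction does not close, and for $a=1$ the move is not even defined. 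Asserting that these boundary cases are ``treated identically'' with $t_4,t_5,t_6$ is not a proof --- the analogous cross-terms produced by those generators again fail to be smaller --- and you concede the point by falling back on computer algebra. Note also that the paper proves this particular lemma entirely by hand; Macaulay2 enters only later in the section (for the sections vanishing along $\bar D$ and for the final relations), so the fallback would be a departure from the paper, not a reproduction of it.

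The missing idea, which is precisely how the paper closes this gap, is to divide out not just common powers of $z$ (your $t_0$-reduction) but also common powers of $x$ and of $y$, i.e.\ to reduce modulo $t_1$ and $t_3$ as well, so that one may assume $a_1a_2=b_1b_2=c_1c_2=0$ in addition to $c_j\le 1$. After this stronger reduction the surviving symmetrised monomials have only three possible shapes,
\[
 x_1^az_2^c+z_1^cx_2^a,\qquad x_1^ay_1z_2^c+z_1^cx_2^ay_2,\qquad x_1^az_2^cy_2+z_1^cy_1x_2^a,
\]
and each is congruent modulo $(t_0,t_1)$, using the Weierstrass relations, to an explicit monomial $t_2^{d_2}t_5^{d_5}$, $t_2^{d_2}t_5^{d_5}t_4$, respectively $t_2^{d_2}t_5^{d_5}t_6$; the ideal terms are handled by the same degree induction you already set up. In particular your troublesome boundary cases evaporate: once the common $t_1$- and $t_3$-factors are removed, the elements with $a\le 2$ are literally $t_1^bt_2$, $t_1^bt_2t_3$, $t_1^bt_4$ and $t_1^bt_6$, so no transfer argument is needed for them at all. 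With this one extra reduction your argument becomes a complete proof along essentially the same lines as the paper's.
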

\begin{proof} 
  If we follow for $m \leq 3$ the outlined procedure we get the elements listed in the table. For $m = 4$ we have $H^0(E, 4\cdot 0) = \langle z^4, z^2x, zy, x^2\rangle$ the procedure gives an additional element
  \[ z_1^4x_2^2 + x_1^2z_2^4 = t_2^2-2 t_0^2 t_1,
  \]
  which is not part of the basis in the table. 
  
  To show that $t_0,  \dots, t_6$ generate the full invariant subring we argue as follows: take a basis element given in the form
  \[ (z_1^{a_1}x_1^{b_1}y_1^{c_1})
  (z_2^{a_2}x_2^{b_2}y_2^{c_2}) +
(z_1^{a_2}x_1^{b_2}y_1^{c_2})
(z_2^{a_1}x_2^{b_1}y_2^{c_1}).
   \]
Since there are the relations $f_i$ we may assume that $c_j\leq 1$. Dividing by the generators $t_0, t_1, t_3$ we may also assume that $a_1a_2 = b_1b_2 = c_1c_2 = 0$. The remaining possibilities are
  \begin{align*}
    x_1^a z_2^c + z_1^c x_2^a&\equiv t_2^{d_2}  t_5^{d_5} \mod (t_0, t_1)\tag {$c = 2d_2+3d_5$}\\
      x_1^a y_1 z_2^c + z_1^c x_2^a y_2&\equiv t_2^{d_2}  t_5^{d_5}t_4 \mod (t_0, t_1)\tag {$c-1= 2d_2+3d_5$}\\
      x_1^a z_2^cy_2 + z_1^cy_1 x_2^a&\equiv t_2^{d_2}  t_5^{d_5}t_6 \mod (t_0, t_1)\tag {$c-1 = 2d_2+3d_5$}
  \end{align*}
so we have already found all generators in $t_0, \dots, t_6$. 
  \end{proof}

Taking the invariant part of a weighted Segre embedding we want to get the
following diagram, where $X$ is embedded in $\IP(1,2,2,3,3)$ 
as complete intersection in degree $(6,6)$.
\[
  \begin{tikzcd}
    \tilde D \rar[hookrightarrow] \dar& E\times E \dar{\sigma} \rar[hookrightarrow] & \IP(1,2,3)\times \IP(1,2,3) \dar[dashed]{(t_0:\dots: t_6)}\\
    \bar D \rar[hookrightarrow]\dar &  S^2 E \rar[hookrightarrow]{R(C_0)}\dar{\pi} &  \IP(1,2,2,3,3,3, 4)\dar[dashed]\\
    D \rar[hookrightarrow] & X \rar[hookrightarrow]{R(K_X)} & \IP(1,2,2,3,3)
  \end{tikzcd},
\]

\begin{lem}\label{lem: what is l?} Let $s_F$ be a section defining $F$. Then the image of the composition
  \[ \begin{tikzcd}H^0(S^2 E, \bar D) \rar[hookrightarrow]{\cdot s_F}&  H^0(S^2E, 3C_0)  \rar[hookrightarrow] & H^0(E\times E, \sigma^*3C_0)\end{tikzcd}\]
  is spanned by the sections $t_4 =z_1 x_1 y_2+y_1 x_2 z_2, t_5= y_1 z_2^3+z_1^3 y_2$.
\end{lem}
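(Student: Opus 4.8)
The plan is to identify the image of the composition with the kernel of a restriction map onto the anti-diagonal $\Delta^-$, and then to read off that kernel from the explicit basis $t_0^3, t_0t_1, t_0t_2, t_3, t_4, t_5$ of $H^0(S^2E, 3C_0)$.

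First I would note that multiplication by $s_F$ identifies $H^0(S^2E, \bar D) = H^0(S^2E, 3C_0 - F)$ with the subspace of $H^0(S^2E, 3C_0)$ of sections vanishing along $F$; since $F$ is a reduced curve on the smooth surface $S^2E$, vanishing along $F$ is the same as divisibility by the local equation $s_F$. Pulling back by $\sigma$ and using both $H^0(S^2E, 3C_0) = H^0(E\times E, \sigma^*3C_0)^+$ and $\sigma^*F = \Delta^-$, the image in question becomes the space of invariant sections of $\sigma^*3C_0$ vanishing along $\Delta^-$. As $\sigma$ is branched only along the diagonal and hence \'etale near the generic point of $\Delta^-$, orders of vanishing are preserved, so this image is precisely the kernel of the restriction map
\[ \mathrm{res}\colon H^0(S^2E, 3C_0) \longrightarrow H^0\bigl(\Delta^-, \sigma^*3C_0|_{\Delta^-}\bigr). \]

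Then I would compute $\mathrm{res}$ on the basis. Parametrising $\Delta^- \isom E$ by $p\mapsto (p,-p)$ and recalling that negation on the Weierstrass curve \eqref{eq: Weierstrass} fixes $z, x$ and negates $y$, we have $(z_2, x_2, y_2) = (z_1, x_1, -y_1)$ along $\Delta^-$. Substituting gives at once
\[ t_4|_{\Delta^-} = z_1x_1(-y_1) + y_1z_1x_1 = 0, \qquad t_5|_{\Delta^-} = z_1^3(-y_1) + y_1z_1^3 = 0, \]
whereas $t_0^3, t_0t_1, t_0t_2, t_3$ restrict to $z_1^6,\ z_1^2x_1^2,\ 2x_1z_1^4,\ -y_1^2$ respectively. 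These four functions are linearly independent in $H^0(E, 6\cdot 0)$: by \eqref{eq: Weierstrass} we have $y_1^2 = x_1^3 + ax_1z_1^4 + bz_1^6$, and the monomial $x_1^3$ does not occur among $z_1^6, z_1^2x_1^2, x_1z_1^4$. Hence $\mathrm{res}$ has rank $4$, its kernel is $2$-dimensional, and since $t_4, t_5$ lie in the kernel and are linearly independent, the image equals $\langle t_4, t_5\rangle$, as claimed. (As a check, Riemann--Roch gives $\chi(S^2E, 3C_0 - F) = 2$.)

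The only genuinely delicate step is the first paragraph: matching ``divisible by $s_F$'' with ``vanishing along $\Delta^-$ after pullback'' and verifying that $\sigma$ is unramified along $\Delta^-$ so that no multiplicities are lost. Once this identification is in place the remaining substitution is routine.
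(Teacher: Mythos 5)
Your proof is correct and follows essentially the same route as the paper: both identify the image with the (invariant) sections vanishing along $F$, i.e.\ along $\Delta^-$ after pullback, via the restriction exact sequence, and both use that inversion in the Weierstrass model negates $y$ to see which basis elements die on $\Delta^-$. The only difference is that you make explicit the rank computation (the restrictions $z_1^6, z_1^2x_1^2, 2x_1z_1^4, -y_1^2$ are independent, so the kernel is exactly $2$-dimensional), a step the paper leaves implicit in ``one can see.''
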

\begin{proof}
  Consider the exact sequence
  \[ 0\to H^0(E\times E, \sigma^* \bar D) \to H^0(E\times E, 3 \sigma^*C_0) \to H^0(E\times E, \sigma^*3C_0|_{\Delta^-}).\]
  Since in the Weierstrass model inversion on the elliptic curve corresponds to changing the sign of the $y$-coordinate one can see that the only invariant sections of $\sigma^*3C_0$ vanishing on $\Delta^-$ are the ones given above.
\end{proof}

We now fix the section $s_{\bar D}$ defining $\bar D$. By Lemma \ref{lem: what is l?} there exist $\alpha, \beta \in \IC$ such that the image of $s_{\bar D}$  in $H^0(S^2E, 3C_0)$ is
\begin{equation}
\label{eq: l} 
s_4:=s_{\bar D} \cdot s_F  = \alpha t_4+\beta t_5.
 \end{equation}

\begin{thm}\label{thm: S2E}
There are explict elements $s_0, \dots, s_4$ in $R(E\times E, \sigma^*C_0)$, depending on sufficiently general choices made in \eqref{eq: Weierstrass} and \eqref{eq: l} such that
    \[ \IC[X, Y_1, Y_2, Z_1, Z_2] \onto  R(K_X) \overset{\pi^*}\isom \IC[s_0, \dots, s_4] \subset R(E\times E, \sigma^*C_0)^+\]
Realising the embedding $X\into \IP(1,2,2,3,3) $ via
$\IC[x, y_1, y_2, z_1, z_2]\onto \IC[s_0, \dots, s_4]$
 the surface is cut out by the equations   
  \begin{equation}
    \label{eq:canonical ring of S2E}
    \begin{split}
      z_1^2 + b_1(x,y_1,y_2) = 0,\\
      z_2 ^2 + xz_1a_2(x,y_1,y_2) + b_2(x,y_1,y_2) = 0,
    \end{split}
  \end{equation}
where \begin{align*}
         b_1 = &-(b^{2}x^{6}+abx^{4}{y}_{1}+b{y}_{1}^{3}+a^{2}x^{4}{y}_{2}-3bx^{2}{y}_{1}{y}_{2}+a{y}_{1}^{2}{y}_{2}-2ax^{2}{y}_{2}^{2}+{y}_{2}^{3}),\\
         a_2 = &-(2{\beta}^{2}x^{2}+2{\alpha}{\beta}{y}_{1}+2{\alpha}^{2}{y}_{2}),\\
         b_2 =  &-(2b{\beta}^{2}x^{6}+\left(2b{\alpha}{\beta}+a{\beta}^{2}\right)x^{4}{y}_{1}+b{\alpha}^{2}x^{2}{y}_{1}^{2} +{\beta}^{2}{y}_{1}^{3}+\left(-2b{\alpha}^{2}+4a{\alpha}{\beta}\right)x^{4}{y}_{2}\\
               &+(a{\alpha}^{2}-3{\beta}^{2})x^{2}{y}_{1}{y}_{2}+2{\alpha}{\beta}{y}_{1}^{2}{y}_{2}-4{\alpha}{\beta}x^{2}{y}_{2}^{2}+{\alpha}^{2}{y}_{1}{y}_{2}^{2} ).
       \end{align*}
       Therefore $X$ is an iterated double cover.
           \end{thm}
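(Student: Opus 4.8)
The plan is to identify $R(K_X)$ explicitly as a subring of the invariant ring $R(E_1\times E_2,\sigma^*C_0)^+$ and then to read off the asserted tower structure directly from the two relations. By Proposition \ref{prop: Kollar}, a section of $H^0(S^2E,mC_0)\isom H^0(E_1\times E_2,m\sigma^*C_0)^+$ descends to $H^0(X,mK_X)$ precisely when its residue on $\bar D^\nu$ is $\tau$-invariant for $m$ even and $\tau$-anti-invariant for $m$ odd, where $\tau$ is the hyperelliptic involution of the genus-$2$ curve $\bar D$. The first task is therefore to make the residue map and the induced $\tau$-action completely explicit on the generators $t_0,\dots,t_6$. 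Two features simplify this: sections divisible by $s_{\bar D}$ have vanishing residue and hence descend unconditionally---this is what singles out $s_4=s_{\bar D}\cdot s_F=\alpha t_4+\beta t_5$ from \eqref{eq: l} as a legitimate degree-$3$ generator---while the remaining eigenspace computation is controlled, through the Weierstrass model \eqref{eq: Weierstrass}, by how inversion $y_i\mapsto -y_i$ acts on the monomials defining the $t_i$.

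Carrying this out degree by degree produces the generators $s_0,\dots,s_4$ of degrees $1,2,2,3,3$, which I would match with the canonical coordinates $x,y_1,y_2,z_1,z_2$: degree one contributes the single section $s_0$, degree two the two sections $s_1,s_2$ (besides $s_0^2$), and degree three the two independent sections $s_3$ and $s_4=\alpha t_4+\beta t_5$. The relations are then obtained by expressing the degree-$6$ products $s_3^2$ and $s_4^2$ in the chosen invariant basis, reducing systematically modulo the Weierstrass relations of \eqref{eq: Weierstrass} and the generating syzygies among $t_0,\dots,t_6$. This elimination is exactly the symbolic computation acknowledged in the statement; it outputs the explicit $b_1$, $a_2$ and $b_2$, and since Theorem \ref{thm: structure from FPR} guarantees that the canonical model is a complete intersection cut out by two sextics, the two relations so produced must generate the full ideal, giving the presentation \eqref{eq:canonical ring of S2E}.

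Granting the equations, the final assertion is then purely formal. Writing $R_0=\IC[x,y_1,y_2]$ and $R_1=R_0[z_1]/(z_1^2+b_1)$, the crucial observation is that $b_1$ lies in $R_0$, so that $R_1$ is free of rank two over $R_0$ and $R(K_X)=R_1[z_2]/(z_2^2+xz_1a_2+b_2)$ is free of rank two over $R_1$, both being monic quadratic extensions without linear term; consequently $X\to\mathrm{Proj}\,R_1\to\mathrm{Proj}\,R_0=\IP(1,2,2)$ is a composition of two finite degree-$2$ morphisms, i.e. an iterated double cover, with the two steps branched over $\{b_1=0\}$ and $\{xz_1a_2+b_2=0\}$ respectively. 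I expect the genuine obstacle to lie not in this last step but in the verification that the first relation carries no $z_2$-term, equivalently that $a_1=0$ in the notation of \eqref{CanRing}: this amounts to choosing the anti-invariant degree-$3$ generator $z_1$ so that $z_1^2$ already lies in $\IC[x,y_1,y_2]$, and rests entirely on the precise $\tau$-eigenspace bookkeeping of the first paragraph rather than on any additional geometry.
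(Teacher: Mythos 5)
Your skeleton coincides with the paper's: Koll\'ar's criterion (Proposition \ref{prop: Kollar}), the observation that multiples of $s_{\bar D}$ descend unconditionally (yielding $s_4=\alpha t_4+\beta t_5$), a degree-by-degree hunt for the remaining generators, computer-algebra elimination for the two sextic relations, completeness of those relations via Theorem \ref{thm: structure from FPR}, and a formal tower argument for the iterated double cover. The gap sits exactly in the step carrying the real content: deciding which sections descend. The gluing involution $\tau$ is the hyperelliptic involution of the abstract genus-$2$ curve $\bar D$, and you never establish any link between it and inversion on $E\times E$; as it stands, knowing how $y_i\mapsto -y_i$ acts on the $t_i$ says nothing about the $\tau$-eigenspaces of residues. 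Such a link does exist --- inversion commutes with the exchange of factors, so it descends to $S^2E$; it preserves every member of $|3C_0-F|$ because $t_4,t_5$ are both anti-invariant; it fixes the curve $F$ pointwise, and since $\bar D\cdot F=3$ its restriction to $\bar D$ is an involution with at least three fixed points, hence the hyperelliptic involution by Riemann--Hurwitz --- but this is a geometric lemma your proof needs and does not contain. Worse, even granting it there is a sign trap: inversion sends $s_{\bar D}\mapsto -s_{\bar D}$, so the residue map $H^0(S^2E,mC_0)\to H^0(\bar D,mK_{\bar D})$ intertwines the inversion action with the intrinsic action on pluricanonical forms only up to $(-1)^m$. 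With the literal bookkeeping you describe, $t_3=y_1y_2$ is inversion-invariant, its residue would be declared $\tau$-invariant, and Koll\'ar's parity rule would exclude it from $H^0(3K_X)$, giving $h^0(3K_X)=2$ instead of $5$; the correct criterion is that a section descends iff it is inversion-invariant or lies in the ideal of $\bar D$. The paper sidesteps all of this: it never realises $\tau$ ambiently, but fixes the abstract presentation $R(\bar D,K_{\bar D})=\IC[A,B,C]/(C^2-g)$, gets surjectivity of the residue map in degrees $\geq 2$ from Kodaira vanishing (Lemma \ref{lem: compute ring S2E deg 1}), and pins down $s_1,s_2,s_3$ by forcing $s_0^2s_2\equiv s_1^2$ and $s_0s_3\equiv s_1s_2$ modulo the ideal of $\bar D$, whose degree-$3$ and degree-$4$ pieces are computed by Macaulay2 (Lemmas \ref{lem: S2E sections vanishing along Dbar}, \ref{lem: compute ring S2E deg 2}, \ref{lem: compute ring S2E deg 3}).

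Your closing claim --- that the vanishing of $a_1$ ``rests entirely on the precise $\tau$-eigenspace bookkeeping'' --- is also wrong in principle: by Proposition \ref{prop: Kollar} the residues of \emph{all} descending sections of a fixed degree satisfy the same parity condition, so the $\tau$-action on residues cannot distinguish the two degree-$3$ generators (both have anti-invariant residue, namely $B^3$ and $0$). What singles out the iterated-double-cover structure is the extra involution above, which fixes $s_0,\dots,s_3$, negates $s_4$, and hence descends to an automorphism of $X$ over $\IP^2$, forcing one $a_i=0$ by Proposition \ref{prop: aut}; in the paper this symmetry is not an input but is read off, post hoc, from the explicitly computed equations (this is exactly the ``surprising'' involution mentioned in the remark after the theorem). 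So your plan is repairable, and a repaired version would even explain conceptually what the paper obtains by computation; but as written, the two steps on which everything rests are unsupported, and one of them, applied literally, produces false dimension counts.
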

     
     \begin{rem}
      It was annoyingly difficult to get from the abstract existence result via gluing already contained in \cite{FPR17} to an explicit representation by equations, precisely because the surface does not admit a $(\IZ/2)^2$ symmetry. Surprisingly, all these surfaces turn out to carry an involution.
     \end{rem}

     \subsubsection{Proof of Theorem \ref{thm: S2E}}
     
     By Koll\'ar's result (Proposition \ref{prop: Kollar}), the canonical ring of $X$ is the pullback ring in the diagram
     \[ \begin{tikzcd}
         R(E\times E, \sigma^*C_0) ^+=
         R(S^2E, C_0)\rar[equal] & R(S^2E, K_{S^2E}+\bar D) \rar{\bar\iota^*} &  R(\bar D, K_{\bar D})\\
         &R(X, K_X) \rar[hookrightarrow] \uar[hookrightarrow] & R(D, K_X|_D)\uar[hookrightarrow]{\pi^*}
       \end{tikzcd}
     \]
     In the following we identify $R(S^2E, K_{S^2E}+\bar D)$ with $ R(E\times E, \sigma^*C_0) ^+$ via the pullback map $\sigma^*$.

     \begin{lem}\label{lem: S2E sections vanishing along Dbar}
       Let $m \geq 2$. Let $s_{\bar D}$ be the section defining $\bar D$ and $s_{F}$ be the section defining $F$, so that  $s_{\bar D}s_F = s_4$.
       Then the sequence
       \[ 0 \to H^0(S^2E, mC_0 -\bar D) \overset{s_{\bar D}}\to H^0(S^2E, mC_0) \to H^0(\bar D, mK_{\bar D}) \to0\]
       is exact. In particular, $h^0(S^2E, mC_0 -\bar D) = \frac{m(m+1)}2 - (2m - 1) = \frac{m(m-3)}2 +1$.
       \begin{center}
         \begin{tabular}{cc}
           $m$ & generators of  image  of $H^0(S^2E, m C_0-\bar D)$ in $H^0(S^2E, m C_0)$\\
           \midrule
           $1$ & $0$\\
           $2$ & $0$\\
           $3$ & $s_4 = \alpha t_4 + \beta t_5$\\
           $4$ & $t_0s_4 $,\\
               & $l_1 =  (b \alpha -a \beta ){t}_{0}^{4}+ \beta {t}_{0}^{2} {t}_{2}- \beta
                 {t}_{1}^{2} -\alpha {t}_{1} {t}_{2}-
                 \alpha {t}_{0} {t}_{3},$\\
               & $l_2 =b \beta {t}_{0}^{4}+ a \alpha {t}_{0}^{2} {t}_{2}+b \alpha {t}_{0}^{2}
                 {t}_{1}- \alpha {t}_{2}^{2}-
                 \beta {t}_{1} {t}_{2}+ \beta {t}_{0} {t}_{3}$
         \end{tabular}
       \end{center}
     \end{lem}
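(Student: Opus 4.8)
The plan is to deduce the exact sequence and the dimension formula from the ideal-sheaf sequence of $\bar D$ on the smooth surface $S^2E$ together with one cohomology vanishing, and to read off the explicit generators afterwards.

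\smallskip
\emph{The exact sequence.} Since $\bar D$ is an effective Cartier divisor cut out by $s_{\bar D}$, I would tensor the structure sequence $0\to\ko_{S^2E}(-\bar D)\to\ko_{S^2E}\to\ko_{\bar D}\to 0$ by $\ko_{S^2E}(mC_0)$ to get
\[0\to \ko_{S^2E}(mC_0-\bar D)\overset{s_{\bar D}}{\longrightarrow}\ko_{S^2E}(mC_0)\longrightarrow \ko_{\bar D}(mC_0)\to 0.\]
Because $K_{S^2E}+\bar D=C_0$, adjunction gives $K_{\bar D}=C_0|_{\bar D}$, hence $\ko_{\bar D}(mC_0)\isom mK_{\bar D}$. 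The long exact cohomology sequence then shows that the sequence of the lemma is exact as soon as $H^1(S^2E,mC_0-\bar D)=0$.

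\smallskip
\emph{The vanishing.} Writing $p=\mathrm{alb}\colon S^2E\to E$, using $mC_0-\bar D\sim (m-3)C_0+F$ and $F=p^{*}[0]$, I would compute $H^1$ through $p$. For $m=2$ the bundle $\ko(-C_0)$ has degree $-1$ on every fibre, so $p_*\ko(-C_0)=R^1p_*\ko(-C_0)=0$ and the vanishing is immediate. For $m\ge 3$ set $a=m-3\ge 0$; then $R^1p_*\ko(aC_0)=0$ (so $R^1p_*\ko(aC_0+F)=0$ by the projection formula) and $\ke_a:=p_*\ko(aC_0)$ is locally free. The sequences $0\to\ko((a-1)C_0)\to\ko(aC_0)\to\ko_{C_0}(aC_0)\to 0$ push forward to $0\to\ke_{a-1}\to\ke_a\to M_a\to 0$ with $\deg M_a=a$ (using $C_0^2=1$ and the isomorphism $p|_{C_0}\colon C_0\to E$). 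Tensoring the resulting filtration of $\ke_a$ by $\ko_E([0])$, every graded piece has positive degree, whence $H^1(E,\ke_a\otimes\ko_E([0]))=0$; by the Leray sequence this equals $H^1(S^2E,(m-3)C_0+F)$.

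\smallskip
\emph{The dimension count.} Exactness now gives $h^0(S^2E,mC_0-\bar D)=h^0(S^2E,mC_0)-h^0(\bar D,mK_{\bar D})$. As computed above $h^0(S^2E,mC_0)=\tfrac{m(m+1)}2$, while Riemann--Roch on the arithmetic genus $2$ curve $\bar D$ gives $h^0(\bar D,mK_{\bar D})=2m-1$ for $m\ge 2$, since $(1-m)K_{\bar D}$ has negative degree. This is exactly $\tfrac{m(m+1)}2-(2m-1)=\tfrac{m(m-3)}2+1$.

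\smallskip
\emph{The generators.} For $m=1,2$ the formula gives $h^0=0$. For $m=3$ one has $3C_0-\bar D\sim F$ with $h^0(F)=1$ spanned by $s_F$, so the image is spanned by $s_{\bar D}s_F=s_4=\alpha t_4+\beta t_5$ by \eqref{eq: l}. For $m=4$ the image is $3$-dimensional; it contains $t_0s_4=s_{\bar D}(t_0 s_F)$, and the two remaining generators $l_1,l_2$ are obtained by determining which combinations of the degree-$4$ monomials $t_0^4,\dots,t_6$ restrict to zero on $\bar D$. This last step is the real obstacle: it is the computer-algebra computation, carried out with the Weierstrass model \eqref{eq: Weierstrass} and $s_4=\alpha t_4+\beta t_5$, producing the stated $l_1,l_2$. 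Once they are in hand, I would note that $t_0s_4$ involves the monomials $t_0t_4,t_0t_5$ absent from $l_1,l_2$, while $l_1$ contains $t_1^2$ (coefficient $-\beta$) but not $t_2^2$ and $l_2$ contains $t_2^2$ (coefficient $-\alpha$) but not $t_1^2$; hence $t_0s_4,l_1,l_2$ are linearly independent and form a basis of the image.
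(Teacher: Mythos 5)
Your proposal is correct, but it takes a genuinely different route from the paper: the paper's entire proof of this lemma is the single sentence that the claims ``can be computed with Macaulay2'', deferring everything --- exactness, dimensions, and the explicit generators --- to the computation documented in the first author's thesis. You instead establish the structural statements conceptually: exactness follows from the restriction sequence for $\bar D\subset S^2E$, adjunction $K_{\bar D}=C_0|_{\bar D}$, and the vanishing $H^1(S^2E,mC_0-\bar D)=H^1(S^2E,(m-3)C_0+F)=0$, which you get by pushing forward along the Albanese ruling and filtering $\mathrm{alb}_*\ko((m-3)C_0)$ by line bundles whose twists by $\ko_E([0])$ have positive degree on the elliptic curve; the dimension count and the generators in degrees $m\le 3$ (where $3C_0-\bar D\sim F$ has a one-dimensional space of sections spanned by $s_F$) then come for free. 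What this buys is software-independence for everything except the degree-$4$ generators, and an explanation of why the numbers $\frac{m(m-3)}{2}+1$ come out as they do; what the paper's route buys is the explicit polynomials $l_1,l_2$ themselves, which are needed later (in the proofs of the subsequent lemmas and of Theorem \ref{thm: S2E}) --- a step you correctly identify as the irreducible computational core and defer to the same Weierstrass-model computation, so on that point the two proofs coincide. Two small repairs: for $m=2$ the relevant bundle is $\ko(-C_0+F)$ rather than $\ko(-C_0)$ (harmless, since $F=\mathrm{alb}^*[0]$ and the projection formula reduces one to the other, as you implicitly use); and your linear-independence check via the $t_1^2$ and $t_2^2$ coefficients degenerates when $\alpha=0$ or $\beta=0$, but comparing also the coefficients of $t_1t_2$ and $t_0t_3$ yields independence for every $(\alpha,\beta)\neq(0,0)$.
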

     \begin{proof}
     This can be computed with Macaulay2, see \cite{thesisAnh} for details.
      \end{proof}

     \begin{lem} \label{lem: compute ring S2E deg 1}
       There exists a choice of generators $A,B$ of degree $1$ and $C$ of degree $3$ such that the diagram
       \begin{equation}\label{diag: rings S2E}
         \begin{tikzcd}
           R(E\times E, \sigma^*C_0) ^+ \rar {\bar\iota^*} &  R(\bar D, K_{\bar D}) \rar[equal] &  \IC[A,B,C]/(C^2-g(A,B))\\
           & R(D, K_X|_D) \rar[equal]\uar[hookrightarrow] &\IC[A,B]\uar[hookrightarrow]
         \end{tikzcd}
       \end{equation}
       commutes and $\bar\iota^* t_0 = A$.
       
       The image of the residue map $\bar\iota^*$ is then the subring generated by $A, AB, B^2, B^3, C, CB$.
     \end{lem}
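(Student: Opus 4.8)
The plan is to make everything explicit on the genus-$2$ curve $\bar D$ and its quotient $D\isom\IP^1$, and then read off the image of the restriction map degree by degree. First I would recall the structure of the canonical ring of a smooth genus-$2$ curve: $\bar D$ is hyperelliptic with hyperelliptic involution $\tau$, Riemann--Roch gives $h^0(K_{\bar D})=2$ and $h^0(mK_{\bar D})=2m-1$ for $m\geq2$, and the ring is generated by two elements of degree $1$ and one of degree $3$ subject to a single relation, so that $R(\bar D,K_{\bar D})\isom\IC[A,B,C]/(C^2-g(A,B))$ with $\deg A=\deg B=1$, $\deg C=3$ and $g$ a binary sextic. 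The normalisation forcing $\bar\iota^*t_0=A$ is legitimate because $t_0=z_1z_2$ cuts out $C_0$ on $S^2E$, which does not contain $\bar D$ (as $\bar D\in|3C_0-F|$); hence $A:=\bar\iota^*t_0$ is a nonzero element of $H^0(\bar D,K_{\bar D})$, which I complete to a basis $\{A,B\}$, and $C$ is the remaining degree-$3$ generator.

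Next I would pin down the action of $\tau$. On a genus-$2$ curve $\tau$ acts as $-1$ on $H^0(K_{\bar D})$, so $\tau^*A=-A$ and $\tau^*B=-B$; in a Weierstrass model $C$ can be chosen $\tau$-invariant, $\tau^*C=C$, and then the relation $C^2=g(A,B)$ is automatically $\tau$-invariant since $g$ has even degree in $A,B$. Feeding this into Koll\'ar's descent criterion (Proposition \ref{prop: Kollar}), a basis monomial $A^iB^j$ or $A^iB^jC$ descends to $D$ precisely when its $\tau$-sign equals $(-1)^{\deg}$; since $\tau^*(A^iB^jC^k)=(-1)^{i+j}A^iB^jC^k$ while the required sign is $(-1)^{i+j+3k}$, descent forces $k$ even, i.e.\ $k=0$. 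Therefore the twisted-invariant subring is exactly $\IC[A,B]$, which is $R(D,K_X|_D)$ (here $D\isom\IP^1$ by the $\chi$-condition and $K_X|_D=\ko_{\IP^1}(1)$). This identifies the bottom row of the diagram and makes the square commute, the left vertical map becoming the obvious inclusion $\IC[A,B]\hookrightarrow\IC[A,B,C]/(C^2-g)$.

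It then remains to compute $\mathrm{Im}(\bar\iota^*)$. In degree $1$ the source $H^0(S^2E,C_0)$ is one-dimensional, spanned by $t_0$, so the image is exactly $\langle A\rangle$ and in particular $B\notin\mathrm{Im}(\bar\iota^*)$. For every $m\geq2$ the exact sequence of Lemma \ref{lem: S2E sections vanishing along Dbar} shows that $\bar\iota^*\colon H^0(S^2E,mC_0)\to H^0(\bar D,mK_{\bar D})$ is surjective, so the image contains all of $R(\bar D,K_{\bar D})_m$. Hence, as a graded ring, $\mathrm{Im}(\bar\iota^*)=\IC\oplus\langle A\rangle\oplus\bigoplus_{m\geq2}R(\bar D,K_{\bar D})_m$. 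Finally I would check by a direct monomial argument, using the basis $\{A^iB^j\}\cup\{A^iB^jC\}$, that this subring is generated by $A,AB,B^2,B^3,C,CB$: one splits off factors $B^2$, absorbs a single odd power of $B$ using $AB$ or $B^3$ (for the pure monomials) or using $CB$ (when a factor $C$ is present), while the degree-$1$ part contributes only $A$.

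The main obstacle is the bookkeeping in this last step, and specifically verifying that the leaner-looking set $A,AB,B^2,B^3,C$ does \emph{not} generate the image, so that $CB$ is genuinely needed. This is exactly where the absence of $B$ from the degree-$1$ image bites: any product involving the single factor $C$ required to build $BC$ uses up degree $3$, leaving degree $1$ for the remaining factors, and the only degree-$1$ generator available is $A$; thus such a product can only produce $AC\neq BC$. Consequently $CB$ must be adjoined as a separate generator, and with it the six listed elements generate $\mathrm{Im}(\bar\iota^*)$ exactly.
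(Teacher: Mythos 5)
Your proof is correct and follows essentially the same outline as the paper's: normalise $A=\bar\iota^*t_0$ in degree one, establish surjectivity of $\bar\iota^*$ in every degree $m\geq 2$, and then verify generation of the image by the six listed elements. The one substantive difference is the surjectivity step: the paper argues via Kodaira vanishing, using
\[
H^1\bigl(m(K_{S^2E}+\bar D)-\bar D\bigr)=H^1\bigl(K_{S^2E}+(m-1)C_0\bigr)=0
\]
since $(m-1)C_0$ is ample for $m\geq 2$, whereas you cite the exact sequence of Lemma \ref{lem: S2E sections vanishing along Dbar} (which the paper establishes by a Macaulay2 computation); both are legitimate within the paper's logical order, and the paper's route has the merit of being computer-free. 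Your explicit $\tau$-sign bookkeeping for identifying the bottom row with $\IC[A,B]$, and your monomial argument for generation --- in particular the observation that $CB$ is genuinely needed because $B\notin\mathrm{Im}(\bar\iota^*)$ in degree $1$, so a single factor $C$ can only be multiplied by $A$ in degree $4$ --- supply exactly the details the paper compresses into ``follows from the fact that $\bar D$ has genus $2$'' and ``it is easy to see.''
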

     \begin{proof}
       The fact that the right hand side of the diagram is of the given form follows from the fact that $\bar D$ has genus $2$ and $\bar D \to D$ is the quotient by the hyperelliptic involution.

       Considering only the part of degree $1$ in the rings we have
       \[\begin{tikzcd} H^0(E\times E, \sigma^*C_0) ^+ = \langle t_0 \rangle \rar[hookrightarrow]{\bar\iota^*}  & \langle A, B \rangle\end{tikzcd}\]
       and we can arrange
       $\bar\iota^* t_0 = A$ by a linear coordinate change not affecting the form of the equation for $\bar D$.

       For  $m \geq 2$ we consider  the exact sequence
\[
H^0 (S^2E, m(K_{S^2E}+\bar D)) \overset{\bar\iota^*}{\to} H^0(\bar D, mK_{\bar D}) \to H^1(m(K_{S^2E}+\bar D )- \bar D)= H^1((m-1) C_0 + K_{S^2E})=0,
\]
where Kodaira vanishing applies, since  $(m-1) C_0$ is ample if $m - 1$ is positive.
 Thus the map $\bar\iota^*$ is surjective for $m \geq 2$ and it is easy to see that the given elements generate the image as a ring. 
      \end{proof}
 \begin{lem}\label{lem: compute ring S2E deg 2}
       In Diagram \eqref{diag: rings S2E} consider the elements of degree $2$ giving
       \[\begin{tikzcd}
           \langle t_0^2, t_1, t_2\rangle  = H^0(S^2E, 2C_0 )  \rar{\bar\iota^*}[swap]{\isom} & H^0(\bar D, 2K_{\bar D}) = \langle A^2, AB, B^2\rangle.
         \end{tikzcd}
       \]
       Let $s_0 = t_0$, $s_1 = \alpha t_2+  \beta t_1$ and $s_2 = (2 b\alpha \beta-a \beta^2) t_0^2+b\alpha^2 t_1+(a\alpha^2+\beta^2) t_2$. Then keeping $A = \bar\iota^*t_0$ the generator $B$ can be chosen such that
       \[ AB = \bar\iota^*(s_1),\text{ and }  B^2  = \bar\iota^*\left(s_2\right). \]
     \end{lem}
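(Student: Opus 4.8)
The plan is to leverage the previous lemma: by the surjectivity statement in Lemma \ref{lem: compute ring S2E deg 1} together with the equalities $h^0(S^2E,2C_0)=3=h^0(\bar D,2K_{\bar D})$, the residue map $\bar\iota^*$ is in fact an \emph{isomorphism} in degree $2$. Since $\bar\iota^*t_0=A$ already forces $\bar\iota^*(t_0^2)=A^2$, the images $\bar\iota^*t_1,\bar\iota^*t_2$ together with $A^2$ form a basis of $\langle A^2,AB,B^2\rangle$. The whole problem then reduces to recognising the two specific combinations $s_1$ and $s_2$ as $AB$ and $B^2$ for a suitable choice of second coordinate $B$, while keeping $A=\bar\iota^*t_0$ fixed.

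First I would make the residue map explicit. Pulling back along $\sigma$ and restricting to $\tilde D=\sigma^*\bar D$, I would compute $\bar\iota^*t_1$ and $\bar\iota^*t_2$ as $\tau$-invariant binary quadrics $c_{i0}A^2+c_{i1}AB_0+c_{i2}B_0^2$ in the fixed coordinate $A$ and some reference coordinate $B_0$ on $D\isom\IP^1$, using the Weierstrass presentation \eqref{eq: Weierstrass} of $R(E_i,0)$ and the description \eqref{eq: l} of $s_{\bar D}$ via $\alpha,\beta$; concretely this amounts to dividing sections of $3C_0$ by $s_F$ along $\Delta^-$, exactly as in Lemma \ref{lem: what is l?}. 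The conceptual anchor for why $\alpha,\beta$ should govern the coordinate change on $\bar D$ is that $s_{\bar D}$ vanishes on $\bar D$, so $\bar\iota^*(s_4)=\bar\iota^*(\alpha t_4+\beta t_5)=0$; the parameters that cut out the conductor are thus forced into the restriction data. This is the same kind of computation carried out for Lemma \ref{lem: S2E sections vanishing along Dbar} and can again be performed in Macaulay2.

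With these formulas in hand the remaining verification is purely algebraic. The decisive coincidence is that the combination $s_1=\alpha t_2+\beta t_1$ has vanishing $B_0^2$-coefficient, i.e. $\alpha c_{22}+\beta c_{12}=0$, so that $\bar\iota^*(s_1)$ is divisible by $A$; I would then \emph{define} $B$ by $AB=\bar\iota^*(s_1)$, writing $B=\gamma A+\delta B_0$ with $\delta\neq0$ (this holds since $s_1$ is not proportional to $t_0^2$, equivalently $(\alpha,\beta)\neq(0,0)$). Because $\bar\iota^*$ is an isomorphism in degree $2$, the element $B^2=(\gamma A+\delta B_0)^2$ has a unique expression in the basis $\{A^2,\bar\iota^*t_1,\bar\iota^*t_2\}$, and it remains to check that this expression is precisely $\bar\iota^*(s_2)$ with $s_2=(2b\alpha\beta-a\beta^2)t_0^2+b\alpha^2t_1+(a\alpha^2+\beta^2)t_2$, which is a direct comparison of coefficients.

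The main obstacle is the explicit evaluation of the residue map and the two coincidences it must produce, namely that $\alpha t_2+\beta t_1$ restricts to a quadric divisible by $A$ and that the matching expression for $B^2$ is exactly the stated $s_2$. These are not formal consequences of the dimension count but reflect the particular shape of the Weierstrass coefficients $a,b$ and of the conductor parameters $\alpha,\beta$; the assumption that the choices in \eqref{eq: Weierstrass} and \eqref{eq: l} be sufficiently general serves to keep the resulting coordinate change non-degenerate.
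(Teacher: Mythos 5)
Your proposal has the right logical skeleton and would work if carried out, but it takes a genuinely different (and heavier) computational route than the paper. The paper never computes the restriction map $\bar\iota^*$ explicitly. Instead it observes that producing $B$ with $AB=\bar\iota^*(s_1)$ and $B^2=\bar\iota^*(s_2)$ is equivalent to the single condition that $s_0^2s_2-s_1^2$ restrict to zero on $\bar D$: indeed, $A^2\,\bar\iota^*(s_2)=(\bar\iota^*(s_1))^2$ in the domain $\IC[A,B]$ forces $A\mid \bar\iota^*(s_1)$, which is exactly your divisibility statement, and then $\bar\iota^*(s_2)=B^2$ follows. The point is that the degree-$4$ part of the ideal of $\bar D$ is already known from Lemma \ref{lem: S2E sections vanishing along Dbar}: it is spanned by $t_0s_4$, $l_1$, $l_2$. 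Since $s_0^2s_2-s_1^2$ involves only $t_0,t_1,t_2$, while $l_1,l_2$ contain $t_0t_3$-terms and $t_0s_4$ contains $t_0t_4,t_0t_5$-terms, the only combination available up to scale is $\beta l_1+\alpha l_2$, and the whole lemma reduces to the by-hand polynomial identity $t_0^2s_2-s_1^2=\beta l_1+\alpha l_2$, which indeed holds for the stated $s_1,s_2$. So the paper gets the two ``coincidences'' you single out for free from the previous lemma, with no new computation.

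By contrast, your route requires a fresh explicit computation of $\bar\iota^*t_1,\bar\iota^*t_2$ as quadrics in $A,B_0$, and this is harder to set up than your sketch suggests. First, it is not ``exactly as in Lemma \ref{lem: what is l?}'': that lemma computes sections \emph{vanishing} on $\bar D$ (the ideal), not restrictions \emph{to} $\bar D$. Second, the reference coordinate $B_0$ is not the restriction of anything on $S^2E$: the degree-$1$ part upstairs is just $\langle t_0\rangle$, and the second canonical coordinate of $\bar D$ comes from $H^1(K_{S^2E})$, so before you can write down the quadrics to be compared you must construct an explicit model of the genus-$2$ curve $\tilde D\subset E_1\times E_2$ and of its hyperelliptic quotient. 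This is feasible (and your endgame — divisibility defines $B$, injectivity of $\bar\iota^*$ plus coefficient matching gives $B^2=\bar\iota^*(s_2)$ — is sound), but since the decisive verifications are precisely the parts you defer to this unexecuted computation, you should either carry it out or, better, notice that Lemma \ref{lem: S2E sections vanishing along Dbar} already contains all the data needed to avoid it.
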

     \begin{proof}
       Since the map is an isomorphism and by the choice of $\bar\iota^*t_0  = A$ from Lemma \ref{lem: compute ring S2E deg 1}
       we need to show that there is an essentially unique way to complete $s_0^2$ to  a basis $s_0^2, s_1, s_2$ of  $\langle t_0^2, t_1, t_2\rangle$ such that $s_0^2 s_2 -s_1^2 = 0$  after restriction to $\bar D$. 
      
      Identifying  $H^0(S^2E, 4C_0) = H^0(E\times E, 4\sigma^*C_0)^+$ and using the basis found in Lemma \ref{lem: S2E sections vanishing along Dbar}, we have to find elements $s_1, s_2\in \langle t_0^2, t_1, t_2\rangle $ such that
      \[ s_0^2 s_2 -s_1^2 \in \langle t_0s_4, l_1, l_2\rangle .\]
      Since the equation cannot be divisible by $t_0$, the required linear combination has to involve $l_1 $ and $l_2$. 
    On the other hand, ince $s_0^2, s_1, s_2$ have degree 2, the term  $t_0t_3$ has to be eliminated on the right hand side. Up to scaling there is a unique way to do so, that is, 
    \[ s_0^2 s_2 -s_1^2 \equiv \beta l_1 + \alpha l_2\mod t_0s_4.\]
        This can be solved to give 
        \[s_0 = t_0, s_1 = \alpha t_2 + \beta t_1\text{ and }s_2 = (2 b\alpha \beta-a \beta^2) t_0^2+b\alpha^2 t_1+(a\alpha^2+\beta^2) t_2, \]   
        since we have $t_0^2\left((2 b\alpha \beta-a \beta^2) t_0^2+b\alpha^2 t_1+(a\alpha^2+\beta^2) t_2\right) \equiv (\alpha t_2+  \beta t_1)^2 \mod t_0s_4$.
     \end{proof}

     \begin{lem}
       \label{lem: compute ring S2E deg 3}
       In  Diagram \eqref{diag: rings S2E} consider the elements of degree $3$ giving
       \[\begin{tikzcd}[sep=small, cramped]
           0 \rar & H^0(S^2E, 3C_0 -\bar D)\dar[equal] \rar & H^0(S^2E, 3C_0 )  \rar{\bar\iota^*}\dar[equal] & H^0(\bar D,3 K_{\bar D}) \dar[equal]\rar &0 \\
           0\rar& \langle  \alpha t_4+ \beta t_5\rangle \dar[equal]\rar[hookrightarrow] & \langle s_0^3, s_0s_1, s_0s_2, t_3, t_4, t_5\rangle \rar[twoheadrightarrow] & \langle A^3, A^2B, AB^2, B^3, C\rangle\rar & 0 \\
           0\rar & \langle \alpha t_4+ \beta t_5\rangle \rar[hookrightarrow] & H^0(X, 3K_X) \uar[hookrightarrow]\rar{\iota^*} &  \langle A^3, A^2B, AB^2, B^3 \rangle\uar[hookrightarrow]\rar & 0
         \end{tikzcd}
       \]
       With the choices of $s_0, s_1, s_2$ from Lemma \ref{lem: compute ring S2E deg 1} and Lemma \ref{lem: compute ring S2E deg 2} the element
       \[s_3 = (b^2 \alpha^3 + b\beta^3)t_0^3+ (ab\alpha^3 + 3b\alpha \beta^2 - a \beta^3)t_0t_1+ (a^2\alpha^3 + 3b\alpha^2 \beta)t_0t_2+ (-b\alpha^3 +a\alpha^2\beta + \beta^3)t_3\]
       satisfies $\bar\iota^*s_3 = B^3$ and it is unique with this property modulo $s_{\bar D}$.
     \end{lem}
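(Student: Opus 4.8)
The plan is to reduce the claimed degree-$3$ identity $\bar\iota^* s_3 = B^3$ to a divisibility statement in degree $4$, where the kernel of $\bar\iota^*$ has already been determined in Lemma~\ref{lem: S2E sections vanishing along Dbar}. The uniqueness assertion comes for free: by the top exact sequence of that lemma the kernel of $\bar\iota^*$ in degree $3$ is $H^0(S^2E, 3C_0-\bar D)=\langle s_4\rangle$, and by \eqref{eq: l} we have $s_4 = s_{\bar D}\cdot s_F$. Hence any two preimages of $B^3$ differ by a multiple of $s_{\bar D}$, which is exactly uniqueness modulo $s_{\bar D}$.

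For existence together with the explicit formula the crucial point is that $R(\bar D, K_{\bar D})$ is an integral domain, since $\bar D$ is irreducible, and that $A=\bar\iota^* t_0\neq 0$. It therefore suffices to exhibit $s_3\in H^0(S^2E,3C_0)$ with
\[ \bar\iota^*(t_0 s_3)= A\cdot B^3 = (AB)\cdot B^2 = \bar\iota^* s_1\cdot \bar\iota^* s_2 = \bar\iota^*(s_1 s_2), \]
where I use $\bar\iota^* s_1 = AB$ and $\bar\iota^* s_2 = B^2$ from Lemma~\ref{lem: compute ring S2E deg 2}; cancelling the nonzerodivisor $A$ then gives $\bar\iota^* s_3 = B^3$. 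Concretely this means finding $s_3$ with $t_0 s_3 - s_1 s_2$ in the degree-$4$ kernel $\langle t_0 s_4, l_1, l_2\rangle$ computed in Lemma~\ref{lem: S2E sections vanishing along Dbar}.

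I would then make the ansatz $t_0 s_3 = s_1 s_2 + \mu\, l_1 + \nu\, l_2$, dropping the term $t_0 s_4$: its omission singles out the representative of $s_3$ not involving $t_4, t_5$, and adding it back corresponds precisely to the freedom modulo $s_{\bar D}$. Expanding $s_1 s_2$, $l_1$ and $l_2$ in the monomial basis, the only summands \emph{not} divisible by $t_0$ are $t_1^2$, $t_1 t_2$ and $t_2^2$; requiring them to cancel forces $\mu = b\alpha^2$ and $\nu = a\alpha^2+\beta^2$. Once these vanish, every remaining monomial ($t_0^4, t_0^2 t_1, t_0^2 t_2, t_0 t_3$) is divisible by $t_0$, so dividing
\[ t_0 s_3 = s_1 s_2 + b\alpha^2\, l_1 + (a\alpha^2+\beta^2)\, l_2 \]
by $t_0$ returns exactly the stated expression for $s_3$.

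The only genuine labour is this last bookkeeping — reading off the $t_0^4, t_0^2 t_1, t_0^2 t_2, t_0 t_3$ coefficients after cancellation — and I expect no conceptual obstacle. The one point worth flagging is consistency of the linear system for $(\mu,\nu)$: two monomials fix $\mu$ and $\nu$, and the vanishing of the $t_1 t_2$-coefficient must then hold \emph{automatically}, which is where the specific shapes of $s_1, s_2$ and of $l_1, l_2$ conspire. This is precisely the type of identity that a Macaulay2 check confirms, in line with the computational approach already invoked in Lemma~\ref{lem: S2E sections vanishing along Dbar}.
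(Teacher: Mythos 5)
Your proposal is correct and follows essentially the same route as the paper's proof: reduce the identity $\bar\iota^*s_3=B^3$ to the degree-$4$ relation $t_0s_3-s_1s_2\in\langle t_0s_4,l_1,l_2\rangle$ from Lemma \ref{lem: S2E sections vanishing along Dbar}, cancel the $t_1^2$, $t_1t_2$, $t_2^2$ terms with the combination $b\alpha^2 l_1+(a\alpha^2+\beta^2)l_2$, and divide by $t_0$ (the paper implicitly uses the same cancellation of $A$ in the domain $R(\bar D,K_{\bar D})$, and the kernel $\langle s_4\rangle=\langle s_{\bar D}s_F\rangle$ for uniqueness). Your derivation of the coefficients $\mu,\nu$ and the observation that the $t_1t_2$-coefficient then vanishes automatically is exactly the computation the paper summarises as ``it is easy to see''.
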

     \begin{proof}
       In Diagram \eqref{diag: rings S2E} consider the elements of degree 3, this gives us
       \[\begin{tikzcd}
           \langle s_0^3, s_0s_1, s_0s_2, t_3, t_4, t_5 \rangle  = H^0(S^2E, 3C_0 )  \rar{\bar\iota^*} & H^0(\bar D, 3K_{\bar D}) = \langle A^3, A^2B, AB^2, B^3, C\rangle.
         \end{tikzcd}
       \]
       By previous lemmas we already identified $s_0, s_1, s_2$ and their images $A, AB, B^2$.
       Note that $C$ has no relations with other generators in this degree and clearly $ \alpha t_4+ \beta t_5$ is in the canonical ring because it restricts to zero on $\bar D$.
       Thus there is only way to map $\alpha t_4 + \beta t_5$ to $C$.
       We need only to identify the element $s_3 \in \langle t_0^3, t_0t_1, t_0t_2, t_3 \rangle$ such that $s_0^3s_3- s_0^2s_1s_2 = 0$ modulo the equation of $\bar D$.
       We can also do this in degree 4 by relation $s_0s_3 - s_1s_2 = 0$ after restriction to $\bar D$.
       From the equations of $s_1, s_2, l_1, l_2$ it is easy to see that $s_1s_2+b \alpha^2 l_1+ (a \alpha^2+ \beta^2)l_2 $ kills all terms of $t_1^2, t_1t_2, t_2^2$ and equals to
       \begin{multline*}t_0\left((b^2 \alpha^3+b \beta^3) t_0^3+(a b \alpha^3+3 b \alpha \beta^2-a \beta^3) t_0 t_1+(a^2 \alpha^3+3 b \alpha^2 \beta) t_0 t_2
       \right.
         \\
         \left.
         +(-b \alpha^3+a \alpha^2 \beta+\beta^3) t_3\right)
         \end{multline*}
         
       Dividing by $s_0 = t_0$ we arrive at the element $s_3$ given in the statement of the Lemma, which then by construction satisfies $(s_0s_3-s_1s_2)|_{\bar D}=0$
and we can choose $\bar\iota^*s_3 = B^3$.
     \end{proof}

     By Theorem \ref{thm: structure from FPR} we know that the canonical ring of $X$ is of the form \eqref{CanRing} and we have found the required generators in the required degrees, $s_0, \dots, s_4$. 
    Thus 
    \[ \IC[X, Y_1, Y_2, Z_1, Z_2] \onto  R(K_X) \isom \IC[s_0, \dots, s_4] \subset R(E\times E, \sigma^*C_0)^+\]
    with explicitly given generators. The relations can then be computed using Macaulay2 \cite{M2}, see \cite{thesisAnh} for details. The form of the equations shows that $X$ has a $\IZ/2$ symmetry by Proposition \ref{prop: aut}.
    
    This concludes the proof of Theorem \ref{thm: S2E}. 
%


\end{document}